	\tikzstyle{lien}=[->,>=stealth,rounded corners=2pt,thick]
	\tikzset{individu/.style={draw,#1},individu/.default={}}
\newcommand{\ensembles}[1]{\mathbf{#1}}
	\newcommand{\N}{\ensembles{N}}
	\newcommand{\R}{\ensembles{R}}
\renewcommand{\P}{\mathrm{P}}
\newcommand{\E}{\mathrm{E}}
\newcommand{\ind}[1]{\mathbf{1}_{\{#1\}}}
\newcommand{\ex}{\mathrm{e}}
\newcommand{\e}{\mathbf{e}}
\newcommand{\eps}{\varepsilon}
\renewcommand{\d}{\mathrm{d}}
\newcommand{\Card}{\mathrm{Card}}
\renewcommand{\b}{\mathrm{b}}
\renewcommand{\t}{\mathbf{t}}
\newcommand{\tn}{\t_n}
\newcommand{\Cut}{\mathrm{Cut}}
\newcommand{\Cutn}{\Cut(\tn)}
\newcommand{\CRT}{\mathcal{T}}
\newcommand{\GWT}{\mathbb{T}}
\newcommand{\GWTn}{\GWT_n}
\newcommand{\Red}{\mathcal{R}}
\newcommand{\Gauss}{Z}
\theoremstyle{plain}
	\newtheorem{Thm}{\textbf{Theorem}}
	\newtheorem{Pro}{\textbf{Proposition}}
	\newtheorem{Lem}{\textbf{Lemma}}
\theoremstyle{definition}
	\newtheorem{Rem}{\textbf{Remark}}
\makeatletter \renewenvironment{proof}[1][\proofname] {\par\pushQED{\qed}\normalfont\topsep6\p@\@plus6\p@\relax\trivlist\item[\hskip\labelsep\bfseries#1\@addpunct{.}]\ignorespaces}{\popQED\endtrivlist\@endpefalse} \makeatother
\title{On the sizes of burnt and fireproof components for fires on a large Cayley tree}
\author{Cyril Marzouk
	\thanks{Institut f\"{u}r Mathematik, Universit\"{a}t Z\"{u}rich, Winterthurerstrasse 190, CH-8057 Zürich, Switzerland. Email: \href{mailto:cyril.marzouk@math.uzh.ch}{\texttt{cyril.marzouk@math.uzh.ch}}.}}
\date{}
\begin{document}
\maketitle

\begin{abstract}
We continue the study initiated by Jean Bertoin in 2012 of a random dynamics on the edges of a uniform Cayley tree with $n$ vertices in which, successively, each edge is either set on fire with some fixed probability $p_n$ or fireproof with probability $1-p_n$. An edge which is set on fire burns and sets on fire its flammable neighbors, the fire then propagates in the tree, only stopped by fireproof edges. We study the distribution of the proportion of burnt and fireproof vertices and the sizes of the burnt or fireproof connected components as $n \to \infty$ regarding the asymptotic behavior of $p_n$.
\end{abstract}


\section{Introduction and main results}\label{introduction}

We recall the definition of the fire dynamics introduced by Bertoin \cite{Bertoin-Fires_on_trees}. Given a tree of size $n$ and a number $p_n \in [0,1]$, we consider the following random dynamics: initially every edge is flammable, then successively, in a random uniform order, each edge is either fireproof with probability $1-p_n$ or set on fire with probability $p_n$. In the latter case, the edge burns, sets on fire its flammable neighbors and the fire propagates instantly in the tree, only stopped by fireproof edges. An edge which has been burnt because of the propagation of fire is not subject to the dynamics thereafter. The dynamics continue until all edges are either burnt or fireproof. A vertex is called fireproof if all its adjacent edges are fireproof and called burnt otherwise; we discard fireproof edges that have at least one burnt extremity and thus get two forests: one consists of fireproof trees and the other of burnt trees. See \autoref{fig-arbre_brule} for an illustration. We study the asymptotic behavior of the size of these two forests and of their connected components as the total size of the tree tends to infinity.

\begin{figure}[ht] \centering
\begin{tabular}{ccc}
\begin{tikzpicture}
\draw
	(-0.2,1.82)-- (0.78,2.02)		
	(0,0)-- (-0.5,0.87)			
	(0.66,3.01)-- (0.02,3.78)		
	(-0.5,0.87)-- (-0.97,-0.01)		
	(-0.2,1.82)-- (-0.43,2.88)		
	(1.43,3.65)-- (0.66,3.01)		
	(1,0)-- (0,0)				
	(0.78,2.02)-- (1.36,1.2)		
	(-0.5,0.87)-- (-0.2,1.82)		
	(0.66,3.01)-- (0.78,2.02)		
;
\begin{small}
\draw
	(0.78,2.02)	node (a) [circle, fill=white, draw]	{a}
	(-0.97,-0.01)	node (b) [circle, fill=white, draw]	{b}
	(-0.43,2.88)	node (c) [circle, fill=white, draw]	{c}	
	(1.36,1.2)		node (d) [circle, fill=white, draw]	{d}
	(0.66,3.01)	node (e) [circle, fill=white, draw]	{e}
	(0,0)			node (f) [circle, fill=white, draw]	{f}
	(1,0)			node (g) [circle, fill=white, draw]	{g}
	(0.02,3.78)	node (h) [circle, fill=white, draw]	{h}
	(-0.5,0.87)		node (i) [circle, fill=white, draw]	{i}
	(-0.2,1.82)		node (j) [circle, fill=white, draw]	{j}
	(1.43,3.65)	node (k) [circle, fill=white, draw]	{k}
;
\end{small}
\begin{footnotesize}
\draw
	(0.33,1.70)	node		{1}
	(-0.12,0.55)	node		{2}
	(0.44,3.53)	node		{3}
	(-0.86,0.54)	node		{4}
	(-0.17,2.4)		node		{5}
	(1,3.52)		node		{6}
	(0.53,0.17)	node		{7}
	(0.9,1.55)		node		{8}
	(-0.51,1.36)	node		{9}
	(0.5,2.55)		node		{10}
;
\end{footnotesize}
\end{tikzpicture}
&
\hspace*{2em}
&
%
\begin{tikzpicture}
\draw [line width=.7pt, double]
	(1,0)-- (0,0)				
;
\draw [line width=.7pt, dashed]
	(1.43,3.65)-- (0.66,3.01)		
	(0.78,2.02)-- (1.36,1.2)		
	(-0.5,0.87)-- (-0.2,1.82)		
	(0.66,3.01)-- (0.78,2.02)		
;
\begin{small}
\draw
	(0.78,2.02)	node (a) [circle, line width=.7pt, fill=white, draw, dashed]	{a}
	(-0.97,-0.01)	node (b) [circle, line width=.7pt, double, fill=white, draw]	{b}
	(-0.43,2.88)	node (c) [circle, line width=.7pt, double, fill=white, draw]	{c}	
	(1.36,1.2)		node (d) [circle, line width=.7pt, fill=white, draw, dashed]	{d}
	(0.66,3.01)	node (e) [circle, line width=.7pt, fill=white, draw, dashed]	{e}
	(0,0)			node (f) [circle, line width=.7pt, double, fill=white, draw]	{f}
	(1,0)			node (g) [circle, line width=.7pt, double, fill=white, draw]	{g}
	(0.02,3.78)	node (h) [circle, line width=.7pt, double, fill=white, draw]	{h}
	(-0.5,0.87)		node (i) [circle, line width=.7pt, fill=white, draw, dashed]	{i}
	(-0.2,1.82)		node (j) [circle, line width=.7pt, fill=white, draw, dashed]	{j}
	(1.43,3.65)	node (k) [circle, line width=.7pt, fill=white, draw, dashed]	{k}
;
\end{small}
\end{tikzpicture}
\end{tabular}
\caption{Given a tree and a uniform enumeration of its edges on the left, if the edges set on fire are the 6th and the 9th, we get the two forests on the right where dotted lines stand for "burnt" and double lines for "fireproof".}
\label{fig-arbre_brule}
\end{figure}

In this work, we assume that the tree is a uniform Cayley tree of size $n$, denoted by $\tn$, i.e. a tree picked uniformly at random amongst the $n^{n-2}$ different trees on a set of $n$ labeled vertices, say, $[n] = \{1, \dots, n\}$. For this model, the system exhibits a phase transition as it is shown by Bertoin \cite{Bertoin-Fires_on_trees}. Theorem 1 in \cite{Bertoin-Fires_on_trees} is stated in the case where $p_n \sim c n^{-\alpha}$ with $c, \alpha > 0$ but extends verbatim as follows: denote by $I_n$ and $B_n$ respectively the total number of fireproof and burnt vertices of $\tn$, then
\begin{enumerate}
\item\label{thm-transition_de_phase_sous} If $\lim_{n \to \infty} n^{1/2} p_n = \infty$ (subcritical regime), then $\lim_{n \to \infty} n^{-1} I_n=0$ in probability.
\item\label{thm-transition_de_phase_sur} If $\lim_{n \to \infty} n^{1/2} p_n =0$ (supercritical regime), then $\lim_{n \to \infty} n^{-1} B_n=0$ in probability.
\item\label{thm-transition_de_phase_crit} If $\lim_{n \to \infty} n^{1/2} p_n = c$ for some $c > 0$ (critical regime), then $\lim_{n \to \infty} n^{-1} I_n = D(c)$ in distribution where
\begin{equation}\label{eq-distribution_D_infini}
\P(D(c) \in \d x) = \frac{c}{\sqrt{2 \pi x (1-x)^3}} \exp\bigg(-\frac{c^2 x}{2 (1-x)}\bigg) \d x,
\qquad 0 < x < 1.
\end{equation}
\end{enumerate}
The aim of this paper is to improve these three convergences. For the first two regimes, we prove a convergence in distribution under an appropriate scaling of $I_n$ and $B_n$ respectively, see the statements below. For the critical regime, we prove the joint convergence in distribution of the number of fireproof vertices and the sizes of the burnt subtrees, ranked in non-increasing order; the precise statement requires some notations and is postponed to \autoref{section3}, see \autoref{thm-crit} there. We next state our main result concerning the subcritical regime.

\begin{Thm}\label{thm-sous_crit}
Suppose that $\lim_{n \to \infty} n^{1/2} p_n = \infty$. Then
\begin{equation*}
\lim_{n \to \infty} p_n^2 I_n = \Gauss^2
\qquad\text{in distribution,}
\end{equation*}
where $\Gauss$ is a standard Gaussian random variable.
\end{Thm}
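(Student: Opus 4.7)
The plan is to approach the theorem by the method of moments, translating the fireproof question into a problem about the cutting-down dynamics of $\tn$.

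The starting point is a one-vertex reduction. Fix $v \in [n]$. By exchangeability of the edge-ordering and independence of the fire/fireproof marks, the sequence of flammable components successively containing $v$ evolves like the following Markov chain: at each step, a uniformly random edge of $v$'s current component is picked; with probability $p_n$ this edge is set on fire (so $v$ is burnt, which is absorbing), and with probability $1-p_n$ the edge is fireproof and $v$'s component is replaced by the subtree containing $v$. The chain ends either when $v$ is burnt or when $v$'s component has been reduced to $\{v\}$, in which case $v$ is fireproof. Denoting by $N_n^v$ the (random) number of cuts needed to isolate $v$ when one iteratively removes a uniform random edge of its current component, this yields the identity
\[
\P(v \text{ is fireproof}) = \E\!\left[(1-p_n)^{N_n^v}\right].
\]
The variable $N_n^v$ is the classical Meir--Moon cutting-down number for uniform Cayley trees, and it is known that $N_n^v/\sqrt{n}$ converges in distribution to a Rayleigh random variable $R$ with density $r\ex^{-r^2/2}$ on $\R_+$. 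Setting $c_n := p_n \sqrt{n} \to \infty$, a Laplace asymptotic exploiting the linear behaviour of this density near zero yields
\[
\P(v \text{ is fireproof}) \;\sim\; \frac{1}{n p_n^2},
\]
so that $\E[p_n^2 I_n] \to 1 = \E[\Gauss^2]$, establishing the first moment.

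To upgrade this to convergence in distribution I would compute all higher moments. Given $k$ distinct vertices $v_1, \ldots, v_k$, the same reduction applied to the joint dynamics of their components gives
\[
\P(v_1, \ldots, v_k \text{ are all fireproof}) = \E\!\left[(1-p_n)^{S_n^{(k)}}\right],
\]
where $S_n^{(k)}$ is the number of cuts of $\tn$ affecting the component of at least one $v_i$, equivalently the number of internal nodes of $\Cut(\tn)$ in the subtree spanned by its root and the leaves $v_1,\ldots,v_k$. The CRT scaling limit of uniform Cayley trees---together with the fact that $\Cut(\tn)$ is itself a uniform Cayley tree---gives $S_n^{(k)}/\sqrt{n} \to P_k$ in distribution, where $P_k$ is the $k$-th atom of a Poisson point process on $\R_+$ with intensity $r\,\d r$. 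Its density $r^{2k-1}\ex^{-r^2/2}/(2^{k-1}(k-1)!)$ behaves like $r^{2k-1}$ near the origin, and the same Laplace computation produces
\[
\P(v_1, \ldots, v_k \text{ are all fireproof}) \;\sim\; \frac{(2k-1)!!}{(n p_n^2)^k}.
\]
Summing over ordered $k$-tuples---and checking that tuples with repetitions give only lower-order contributions---yields $\E[(p_n^2 I_n)^k] \to (2k-1)!! = \E[\Gauss^{2k}]$ for every $k \geq 1$. Since the law of $\Gauss^2$ is determined by its moments, the method of moments concludes $p_n^2 I_n \to \Gauss^2$ in distribution.

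The delicate point I expect is the Laplace asymptotic $\E[(1-p_n)^{S_n^{(k)}}] \sim (2k-1)!!/(np_n^2)^k$: plain convergence in distribution $S_n^{(k)}/\sqrt{n} \to P_k$ is not enough, because the relevant values of $S_n^{(k)}$ are of order $1/p_n \ll \sqrt{n}$, i.e.~arbitrarily close to the origin of the limiting density on that scale. Overcoming this requires either a local limit theorem on the scale $1/p_n$ for $S_n^{(k)}$, or an exact combinatorial handle on the joint law of $\{v_1, \ldots, v_k \text{ fireproof}\}$---which should be accessible for uniform Cayley trees via Prüfer-type bijections or Abel-type identities---from which the asymptotic can be extracted by direct calculation.
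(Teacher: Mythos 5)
Your approach is genuinely different from the paper's and, in outline, sound. The paper proves the theorem by running the dynamics until the first fire, observing that the surviving forest then has total size $\Theta(p_n^{-2})$ with component sizes given by the rescaled jumps of a stable-$\tfrac12$ subordinator (Proposition~\ref{pro-convergence_arbres_non_brules_sauts_subordinateur_cas_sous_critique}), and that the dynamics on each remaining component are then exactly \emph{critical}, so that a mild extension of Bertoin's critical-regime result (Lemma~\ref{lem-generalistion_thm_convergence_densite_sites_ignifuges_cas_sous_critique}) applies. The number of fireproof vertices is recovered as a Poissonian sum whose Laplace transform is computed explicitly. You instead go for the method of moments: the identity $\P(v_1,\dots,v_k \text{ all fireproof}) = \E[(1-p_n)^{S_n^{(k)}}]$, where $S_n^{(k)}$ counts the ancestors of the leaves $\{v_1\},\dots,\{v_k\}$ in the cut-tree, is correct and is exactly the cut-tree viewpoint the paper introduces in Section~\ref{section31}. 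The weak limits you invoke --- $N_n^v/\sqrt n \to \text{Rayleigh}$ and, more generally, $S_n^{(k)}/\sqrt n$ converging to the total length $L_k$ of the CRT reduced to $k$ random leaves plus the root, with density $r^{2k-1}\ex^{-r^2/2}/(2^{k-1}(k-1)!)$ --- are also correct (this is Aldous's line-breaking construction combined with \eqref{eq-convergence_arbres_reduits}). The Carleman condition for $\Gauss^2$ holds, so the method of moments would indeed close the argument. What your approach buys is a purely combinatorial proof avoiding the subordinator machinery and the diagonal-argument tightness bookkeeping; what the paper's buys is more information (it simultaneously yields Proposition~\ref{pro-composante_geante_cas_sous_critique} on the size of the largest fireproof component) and bypasses any local limit theorem.

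That said, there is a genuine gap, and you correctly locate it yourself: the weak convergence $S_n^{(k)}/\sqrt n \to L_k$ says nothing about $\P(S_n^{(k)} = j)$ for $j$ of order $1/p_n$, which lies in the mesoscopic window $1 \ll j \ll \sqrt n$ (note that $p_n \to 0$ is implicitly required throughout --- if $p_n$ stays bounded away from $0$ the claimed limit is false, as $p_n^2 I_n$ is lattice-valued, so both the theorem and your moment computation tacitly assume $p_n \to 0$; for instance your ``tuples with repetitions are lower order'' step needs $p_n^2 \to 0$). The Laplace asymptotic $\E[(1-p_n)^{S_n^{(k)}}] \sim (2k-1)!!/(np_n^2)^k$ therefore needs a local limit theorem of the form $\P(S_n^{(k)} = j) \sim \tfrac{j^{2k-1}}{2^{k-1}(k-1)! \, n^k}$ uniformly for $1 \ll j \ll \sqrt n$. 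This is not a routine consequence of the scaling limit: near the origin the limiting density is polynomially small, and one must rule out a different small-deviation behaviour for $S_n^{(k)}$. Even for $k=1$ this requires a real argument (and the literal extrapolation already fails at $j=O(1)$: $\P(N_n^v = 1) \sim 1/(\ex n)$, not $1/n$, because it forces $v$ to be a leaf), and for $k \ge 2$ one would additionally have to control the joint smallness of all segments of the reduced cut-tree. Your suggested fallback --- an exact combinatorial formula for $\P(v_1,\dots,v_k \text{ fireproof})$ via Abel-type identities on Cayley forests --- is plausible but is itself a nontrivial piece of work that would have to be carried out. (One small inaccuracy, inessential to the argument: $\Cut(\tn)$ is a rooted binary tree with $n$ labelled leaves and $n-1$ unlabelled internal nodes, not literally a uniform Cayley tree; all you actually use is its CRT scaling limit, which is fine.) As it stands, the proposal is a coherent alternative strategy with a clearly identified but unfilled local-limit-theorem hole.
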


Consider then the supercritical regime; as we are interested in the asymptotic behavior of $B_n$ we assume that $p_n \gg n^{-1}$ so that the probability that all the vertices are fireproof is $(1 - p_n)^{n-1} \to 0$.
\begin{Thm}\label{thm-sur_crit_1}
Suppose that $\lim_{n \to \infty} n^{1/2} p_n = 0$ and $\lim_{n \to \infty} n p_n = \infty$. Then
\begin{equation*}
\lim_{n \to \infty} (n p_n)^{-2} B_n = \Gauss^{-2}
\qquad\text{in distribution,}
\end{equation*}
where $\Gauss$ is a standard Gaussian random variable.
\end{Thm}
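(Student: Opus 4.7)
The plan is to view this statement as the counterpart, for $B_n$ in the supercritical regime, of \autoref{thm-sous_crit} for $I_n$ in the subcritical regime, and to prove it by extending Bertoin's critical-regime analysis past its formal boundary. First I would reformulate the dynamics in a static manner: label each edge of $\tn$ independently fireproof with probability $1-p_n$ and flammable with probability $p_n$, and equip it with an independent uniform time $U_e \in [0,1]$. Processing the edges in increasing order of $U_e$ reproduces the fire dynamics: when a flammable edge $e$ is reached and still alive, it ignites, and fire burns the connected component of $e$ in the subgraph of still-alive edges, the already-processed fireproof edges acting as barriers. Under the supercritical hypothesis, the first fire occurs at a time $U_{e_1}$ of order $(np_n)^{-1}$; by that time, the number of fireproof barriers is of order $1/p_n \gg n^{1/2}$, so $\tn$ has already been broken into many pieces.

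Next I would exploit the critical-regime convergence recalled in the introduction: when $n^{1/2} p_n \to c > 0$, $I_n/n$ converges in distribution to $D(c)$ with density given by \eqref{eq-distribution_D_infini}. A direct change of variables $x = 1 - c^2 v$ in that density shows that the law of $(1-D(c))/c^2$ converges, as $c \to 0^+$, to the law with density $\frac{1}{\sqrt{2\pi v^3}} \exp(-\frac{1}{2v}) \ind{v>0}$, which is precisely the law of $\Gauss^{-2}$. Since $(np_n)^{-2} B_n = (1 - I_n/n)/c_n^2$ with $c_n := n^{1/2} p_n$, this is consistent with the target: $\Gauss^{-2}$ is exactly what one obtains formally by letting $c \to 0$ at the boundary of the critical regime.

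To turn this consistency into a proof when $p_n$ need not be of the form $c/\sqrt{n}$, the plan is to derive a closed-form expression for the law of $B_n$ valid for every $p_n \in (0,1)$, using the combinatorial and generating-function machinery (Cayley-type enumeration of labelled forests) underlying Bertoin's critical analysis, and then to analyse its asymptotics directly under $n \to \infty$, $p_n \to 0$, $np_n \to \infty$. The main obstacle will be the uniformity of the convergence as $c_n := n^{1/2}p_n \to 0$: the limit law $D(c_n)$ degenerates to a Dirac mass at $1$, so pointwise convergence of distribution functions is lost and one must handle the rescaled variable $B_n/(np_n)^2$ directly. The delicate step will be controlling the upper tail of $B_n$, which reflects the heavy tail of $\Gauss^{-2}$: an atypically large $B_n$ corresponds to the event that the first fire reaches a region in which few fireproof barriers have yet been deposited, thereby burning a very large connected component, and capturing this contribution uniformly in $p_n$ is the hard part.
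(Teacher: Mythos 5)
Your first two paragraphs correctly reproduce the heuristic recorded in the paper's Remark, namely that $c^{-2}(1-D(c)) \to \Gauss^{-2}$ in distribution as $c \to 0$, and you correctly identify that the difficulty is the degeneration of the limit law and the heavy upper tail. But your third paragraph is where the proof should be, and there you only announce a strategy — ``derive a closed-form expression for the law of $B_n$ valid for every $p_n$, then analyse its asymptotics'' — without giving any indication of what that closed form is or why it should be tractable. No such closed form is ever obtained in the paper, and the paper's own Remark explicitly warns that the critical-regime argument ``does not enable one to deal with the sub or supercritical regime,'' which is precisely the route you are proposing to push further. As written, the proposal therefore names the obstacle but does not overcome it.

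What the paper actually does is structurally different and worth noting. Rather than attack $B_n$ as a single quantity, it decomposes $B_n = \sum_{j} \b_{n,j}$ into the sizes of the successive burnt subtrees (in order of appearance) and proves the stronger \autoref{thm-sur_crit_2}: $(np_n)^{-2}(\b_{n,1},\b_{n,2},\dots) \to (\gamma_i^{-2}\Gauss_i^2)_{i\ge 1}$ in $\ell^1$, where $\gamma_i$ are the arrival times of a rate-$1$ Poisson process. \autoref{thm-sur_crit_1} then follows because $\sum_i \gamma_i^{-2}\Gauss_i^2$ has the law of $\sigma(1)$ for a stable-$1/2$ subordinator, i.e.\ of $\Gauss^{-2}$. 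The technical core is: (a) at the time $\zeta_{n,1}$ of the first fire, of order $p_n^{-1}$, the tree has been split into i.i.d.\ Borel$(1)$ pieces conditioned to sum to $n$, and since $p_n^{-1}\gg \sqrt n$ there is \emph{no} giant component (unlike the subcritical case), so the size-biased piece that burns is of order $(np_n)^2 = o(n)$ — this is handled by an explicit Stirling computation with the Borel/Borel--Tanner densities (\autoref{lem-taille_premier_arbre_brule_cas_sur_critique}); (b) the same argument iterated gives joint convergence of the first $j$ burnt sizes (\autoref{pro-taille_premiers_arbres_brules_cas_sur_critique}); (c) the tail $\sum_{j\ge j_0}\b_{n,j}$ is controlled by coupling the burnt subtrees with a simpler ``marked subtree'' process and importing a first-moment estimate from Bertoin's burning-cars paper (\autoref{lem-borne_restes_serie_taille_arbres_brules}, \autoref{lem-borne_restes_esperance_arbres_marques}). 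None of these steps is a refinement of the critical-regime generating-function analysis you invoke; they are a separate combinatorial/probabilistic argument, and the tail bound in particular is exactly the ``delicate step'' you flagged and for which your proposal offers no mechanism.
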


\begin{Rem}
Let $\Gauss$ be a standard Gaussian random variable. One can check from \eqref{eq-distribution_D_infini} that
\begin{equation}\label{eq-limite_D_c_en_fonction_du_parametre}
\lim_{c \to \infty} c^2 D(c) = \Gauss^2
\quad\text{and}\quad
\lim_{c \to 0} c^{-2} (1 - D(c)) = \Gauss^{-2}
\end{equation}
in distribution. Very informally, if we write $I_n(p_n)$ for the number of fireproof vertices of $\tn$ when the probability to set on fire a given edge is $p_n$, and similarly $B_n(p_n)$, then \ref{thm-transition_de_phase_crit} above shows that for every $c \in (0, \infty)$ fixed,
\begin{equation*}
I_n(c n^{-1/2}) \approx n D(c),
\quad\text{and}\quad
B_n(c n^{-1/2}) \approx n (1 - D(c)).
\end{equation*}
From \eqref{eq-limite_D_c_en_fonction_du_parametre}, one is tempted to write more generally for $p_n \gg n^{-1/2}$,
\begin{equation*}
I_n(p_n) \approx n D(n^{1/2} p_n) \approx p_n^{-2} \Gauss^2,
\end{equation*}
and for $p_n \ll n^{-1/2}$,
\begin{equation*}
B_n(p_n) \approx n (1 - D(n^{1/2} p_n)) \approx (n p_n)^2 \Gauss^{-2}.
\end{equation*}
However, it does not seem clear to the author how to prove respectively \autoref{thm-sous_crit} and \autoref{thm-sur_crit_1} from this sketch. Indeed the argument in \cite{Bertoin-Fires_on_trees} does not enable one to deal with the sub or supercritical regime and the proofs given here are different from that of \ref{thm-transition_de_phase_sous}, \ref{thm-transition_de_phase_sur} and \ref{thm-transition_de_phase_crit}.
\end{Rem}

The rest of this paper is organized as follows. Relying on Pitman \cite{Pitman-Poisson_Kingman_partitions-test} and Chaumont and Uribe Bravo \cite{Chaumont_Uribe_Bravo-Markovian_bridges_weak_continuity_and_pathwise_constructions}, we briefly discuss in \autoref{section2} the existence of a conditional distribution for the sequence of the ranked sizes of the jumps made during the time interval $[0, 1]$ by a certain subordinator, say, $\sigma$, conditionally given the value of the latter at time $1$. We also prove the continuity of this conditional distribution in the terminal value $\sigma(1)$, which will be used to derive our first result.

We then focus on the critical regime in \autoref{section3}. Motivated by the proof of \ref{thm-transition_de_phase_crit} in Bertoin \cite{Bertoin-Fires_on_trees}, we associate with the Cayley tree $\tn$ its cut-tree and view the fire dynamics as a point process on the latter. Using the ideas of Aldous and Pitman \cite{Aldous_Pitman-Standard_additive_coalescent}, we show that the marked cut-tree converges to the Brownian continuum random tree (CRT) endowed with a slight modified version of the point process obtained in \cite{Aldous_Pitman-Standard_additive_coalescent}. This yields the joint convergence of the number of fireproof vertices and the sizes of the burnt connected components to the masses of the components of the CRT logged at the atoms of the point process. Using a second approximation of the CRT with finite trees, we further express this limit as a mixture of the jumps of the previous subordinator $\sigma$ conditioned on the value of the latter at time 1, with a mixing law $D(c)$.

We prove \autoref{thm-sous_crit} in \autoref{section4}. For this, we shall see that, with high probability, the remaining forest after the first fire has a total size of order $p_n^{-2}$ and so have its largest trees. Note that the dynamics then continue on each subtree independently. Informally, the smallest ones do not contribute much and may be neglected, while the dynamics on the largest subtrees are now critical. A slight generalization of \ref{thm-transition_de_phase_crit} then yields an asymptotic for the number of fireproof vertices in each subtree and so for the total number of fireproof vertices.

Finally, we prove \autoref{thm-sur_crit_1} in \autoref{section5}. Consider the sequence of the sizes of the burnt subtrees, ranked in order of appearance, and all rescaled by a factor $(n p_n)^{-2}$. We prove that the latter converges in distribution for the $\ell^1$ topology, from which \autoref{thm-sur_crit_1} follows readily. To this end, we first show for every integer $j$ the joint convergence for the size of the $j$ first burnt subtrees; then we show that, taking $j$ large enough, the next trees are arbitrary small.


\section{Preliminaries on subordinators and bridges}\label{section2}

Let $(\sigma(t), t \ge 0)$ be the first-passage time process of a linear Brownian motion: $\sigma$ is a stable subordinator of index $1/2$ such that
\begin{equation}\label{eq-definition_subordinateur_S}
\E[\exp(- q \sigma(t))] = \exp(-t \sqrt{2 q}),\quad \text{for any } t, q \ge 0.
\end{equation}
Let $J_1 \ge J_2 \ge \dots \ge 0$ be the ranked sizes of its jumps made during the time interval $[0, 1]$. We need to make sense of the conditional distribution of the sequence $(J_i)_{i \ge 1}$ conditionally given the null event $\{\sigma(1) = z\}$ in the set $\ell^1(\R)$ of real-valued summable sequences.

From the Lévy-It\={o} decomposition, we know that $\sigma$ is a right-continuous, non-decreasing process which increases only by jumps - we say that $\sigma$ is a pure jump process - and that the pairs $(t, x)$ induced by the times and sizes of the jumps are distributed as the atoms of a Poisson random measure on $[0, 1] \times (0,\infty)$ with intensity $(2 \pi x^3)^{-1/2} \d t \d x$.
Denote by $(P_i)_{i \ge 1}$ a size-biased permutation of the sequence $(J_i/\sum_k J_k)_{i \ge 1}$. Pitman \cite{Pitman-Poisson_Kingman_partitions-test} gives an inductive construction of a regular conditional distribution for $(P_i)_{i \ge 1}$ given $\{\sum_k J_k = z\}$ for arbitrary $z > 0$. The latter determines the conditional distribution of $(J_i/\sum_k J_k)_{i \ge 1}$ given $\{\sum_k J_k = z\}$ called Poisson-Kingman distribution. Descriptions of finite-dimensional distributions can be found in Perman \cite{Perman-Order_statistics_for_jumps_of_normalised_subordinators} or in Pitman and Yor \cite{Pitman_Yor-The_two_parameter_Poisson_Dirichlet_distribution_derived_from_a_stable_subordinator}. Our purpose here is to check that these distributions depend continuously on the variable $z$.

\begin{Pro}\label{pro-continuite_sauts_pont_stable}
The conditional distribution of the ranked jump-sizes $(J_i)_{i \ge 1}$ given $\{\sigma(1) = z\}$ is continuous in $z$.
\end{Pro}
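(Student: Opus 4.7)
The plan is to lift the question from the space $\ell^1(\R)$ of ranked jump sequences to the Skorokhod space $D([0,1], \R_+)$ of càdlàg paths, and to conclude by the continuous mapping theorem. The idea is that the hard analytic content (regularity of the stable-$1/2$ semigroup, Doob transform) is already packaged in the weak continuity of the subordinator bridge, and the jump-ranking map is mild enough on monotone paths.

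First, I would invoke Chaumont and Uribe Bravo \cite{Chaumont_Uribe_Bravo-Markovian_bridges_weak_continuity_and_pathwise_constructions} to obtain, for each $z > 0$, a probability measure $\P_z$ on $D([0,1], \R_+)$ describing the law of $(\sigma(t))_{t \in [0,1]}$ conditioned on $\sigma(1) = z$, together with the weak continuity of $z \mapsto \P_z$ in the Skorokhod topology. This is where the hard analytic work is already done, via a Doob $h$-transform built on the continuous stable-$1/2$ density $p_1(z) = (2 \pi z^3)^{-1/2} \exp(-1/(2 z))$.

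Next, I would introduce the map $\Phi \colon D([0,1], \R_+) \to \ell^1(\R)$ that sends a non-decreasing càdlàg path $f$ to the sequence of its jump sizes ranked in non-increasing order, and show that $\Phi$ is continuous at every non-decreasing path $\omega$ with pairwise distinct jump sizes and no jump at time $1$. Indeed, if $\sigma_n \to \sigma$ in the Skorokhod topology with $\sigma$ of this form, then $\sigma_n(1) \to \sigma(1)$, and for each fixed $k$ the $k$ largest jumps of $\sigma_n$ converge to those of $\sigma$. Writing $\mathbf{J}^{(n)} = \Phi(\sigma_n)$ and $\mathbf{J} = \Phi(\sigma)$, the monotonicity of the paths yields the telescoping bound
\[
\|\mathbf{J}^{(n)} - \mathbf{J}\|_{\ell^1} \leq \sum_{i=1}^k |J_i^{(n)} - J_i| + \Big(\sigma_n(1) - \sum_{i=1}^k J_i^{(n)}\Big) + \Big(\sigma(1) - \sum_{i=1}^k J_i\Big),
\]
where the two tail terms can be made arbitrarily small by first choosing $k$ large (since $\sigma(1) < \infty$) and then letting $n \to \infty$. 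Since the Lévy measure $(2 \pi x^3)^{-1/2} \d x$ is non-atomic, $\P_z$ is concentrated on paths with pairwise distinct jump sizes and no jump at the deterministic time $1$, so $\Phi$ is $\P_z$-almost surely continuous.

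Applying the continuous mapping theorem then gives $\Phi_\ast \P_{z_n} \Rightarrow \Phi_\ast \P_z$ whenever $z_n \to z$, which is exactly the desired continuity of the conditional distribution of $(J_i)_{i \geq 1}$ given $\sigma(1) = z$. The main obstacle is precisely the passage from Skorokhod convergence of paths to $\ell^1$ convergence of jump sequences: coordinatewise convergence of the $k$ largest jumps is cheap, but $\ell^1$ control of the tail requires both the monotonicity of $\sigma$ and the preservation of total mass in the limit, which is exactly what the telescoping inequality above exploits.
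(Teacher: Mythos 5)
Your approach matches the paper's: invoke Chaumont--Uribe Bravo for Skorokhod-continuity of the bridge laws $\mathbf{P}_{0,z}^1$ in $z$, then transfer to $\ell^1$ convergence of ranked jump sequences via a telescoping tail bound. The paper isolates this second step as a deterministic statement (\autoref{lem-convergence_skorohod_et_sauts}) and applies Skorokhod's representation theorem where you invoke the continuous mapping theorem; these are interchangeable.

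There is one gap in your justification of the tail control. You claim the tail $\sigma(1) - \sum_{i \le k} J_i$ is small ``since $\sigma(1) < \infty$'', but finiteness of $\sigma(1)$ alone does not give $\sum_i J_i = \sigma(1)$: a finite right-continuous non-decreasing path may carry a continuous increasing part, in which case that tail stays bounded away from zero for all $k$, and $\Phi$ is then \emph{not} continuous at such a path. What you actually need is that $\mathbf{P}_{0,z}^1$-almost every path is a pure jump path, i.e.\ $\sigma(1) = \sum_i J_i$ almost surely. This is true because the stable-$1/2$ subordinator has neither drift nor Gaussian component, and conditioning on the terminal value does not alter the almost-sure path regularity; it is precisely the hypothesis made explicit in part (ii) of \autoref{lem-convergence_skorohod_et_sauts}. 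Your other two almost-sure restrictions (pairwise distinct jump sizes, no jump at time $1$) are harmless but not needed for the deterministic step: the paper sidesteps distinctness by choosing, for each tolerance, an index $N$ with $j_N > j_{N+1}$, which is always possible since the ranked sequence decreases to zero.
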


\begin{proof}
In the recent work of Chaumont and Uribe Bravo \cite{Chaumont_Uribe_Bravo-Markovian_bridges_weak_continuity_and_pathwise_constructions}, sufficient conditions on the distribution of a Markov process $(X_t, t \ge 0)$ in a quite general metric space are given in order to make sense of a conditioned version of $(X_s, 0 \le s \le t)$ given $\{X_0 = x$ and $X_t = y\}$. The latter is called Markovian bridge from $x$ to $y$ of length $t$ and its law is denoted by $\mathbf{P}_{x, y}^t$. The process $\sigma$ fulfills the framework of their Theorem 1 and Corollary 1, it follows that the bridge laws $\mathbf{P}_{0, z}^1$ are well defined and continuous in $z$ for the Skorohod topology. Thanks to Skorohod's representation Theorem, the claim thus reduces to the deterministic result below.
\end{proof}

Let $f, f_1, f_2, \dots$ be functions defined from $[0, 1]$ to $[0, \infty)$ which are non-decreasing, right-continuous and null at $0$. Denote by $j_1 \ge j_2 \ge \dots \ge 0$ the ranked sizes of the jumps of $f$ and respectively, $j^{(n)}_1 \ge j^{(n)}_2 \ge \dots \ge 0$ that of $f_n$ for every $n \ge 1$.

\begin{Lem}\label{lem-convergence_skorohod_et_sauts}
Suppose that $f_n$ converges to $f$ for the Skorohod topology. Then
\begin{enumerate}
\item For any integer $N$, $(j^{(n)}_1, \dots, j^{(n)}_N)$ converges to $(j_1, \dots, j_N)$ in $\R^N$.
\item If $f$ is a pure jump function, then $(j^{(n)}_k)_{k \ge 1}$ converges to $(j_k)_{k \ge 1}$.
\end{enumerate}
\end{Lem}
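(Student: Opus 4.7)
The plan is to reduce to the case of uniform convergence via the Skorohod representation and then exploit the monotonicity of the $f_n$'s to compare jumps pointwise. By definition of the Skorohod topology, there exist continuous increasing bijections $\lambda_n \colon [0,1] \to [0,1]$ with $\lambda_n(0) = 0$, $\lambda_n(1) = 1$, $\|\lambda_n - \mathrm{id}\|_\infty \to 0$ and $\|f_n \circ \lambda_n - f\|_\infty \to 0$; since $\lambda_n$ is a continuous bijection, $f_n$ and $f_n \circ \lambda_n$ have the same multiset of jump sizes. Replacing $f_n$ by $f_n \circ \lambda_n$, one may therefore assume that $f_n \to f$ uniformly on $[0,1]$, which in particular yields $f_n(1) \to f(1)$.

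The whole argument then rests on the pointwise estimate
\begin{equation*}
\bigl|\Delta f_n(s) - \Delta f(s)\bigr| \le 2\,\|f_n - f\|_\infty, \qquad s \in [0,1],
\end{equation*}
which follows from $|f_n(s) - f(s)| \le \|f_n - f\|_\infty$ together with the analogous bound for the left-limits, obtained by passing to the limit $u \uparrow s$ in $|f_n(u) - f(u)| \le \|f_n - f\|_\infty$. To prove (i), let $t_1, \dots, t_N$ be the times of the $N$ largest jumps of $f$ (with $j_k = 0$ and $t_k$ irrelevant if $f$ has fewer than $N$ jumps). Applying the estimate at each $t_k$ yields $\Delta f_n(t_k) \ge j_k - 2\|f_n - f\|_\infty$, so $f_n$ carries, at these $N$ distinct times, jump values each of size at least $j_N - 2\|f_n - f\|_\infty$, which forces $j^{(n)}_N \ge j_N - 2\|f_n - f\|_\infty$. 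Conversely, any time $s$ with $\Delta f_n(s) > j_N + 2\|f_n - f\|_\infty$ satisfies $\Delta f(s) > j_N$, and there are at most $N-1$ such times in $[0,1]$, yielding $j^{(n)}_N \le j_N + 2\|f_n - f\|_\infty$. The same argument applied for each $k \le N$ gives the componentwise convergence.

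For (ii), the pure-jump hypothesis gives $f(1) = \sum_k j_k$. Part (i) combined with Fatou's lemma applied to counting measure on $\N$ yields $\liminf_n \sum_k j^{(n)}_k \ge \sum_k j_k$, while $\sum_k j^{(n)}_k \le f_n(1) \to f(1)$ controls the limsup, so $\sum_k j^{(n)}_k \to \sum_k j_k$. Together with the componentwise convergence from (i), Scheffé's lemma for non-negative summable sequences then yields convergence in $\ell^1$. I expect the main technical point to be the clean handling of ties between jump sizes and of the degenerate case $j_N = 0$, but the uniform estimate above is insensitive to both, as it makes no reference to the ordering of the $t_k$'s beyond their being distinct jump times of $f$.
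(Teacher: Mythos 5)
Your proof is correct. Both you and the paper begin the same way, replacing $f_n$ by $f_n\circ\lambda_n$ to reduce to uniform convergence, and both observe that the jump multiset is unaffected by the time change. Where you part ways is in how the comparison of jumps is organized, and in part (ii). In (i), the paper first restricts to $N$ with $j_N>j_{N+1}$ and then argues qualitatively: $\Delta f_n(t)\to\Delta f(t)$ pointwise, and for large $n$ the function $f_n$ has no extra jump exceeding $j_{N+1}+\eps/2$, so the approximants of $j_1,\dots,j_N$ must in fact be the $N$ largest jumps of $f_n$; the finite-jump case gets a separate sentence. You instead extract the clean quantitative estimate $\vert j^{(n)}_k-j_k\vert\le 2\Vert f_n-f\Vert_\infty$ for every $k$, via the two-sided counting argument at the $k$ largest jump times of $f$, and this estimate is completely insensitive to ties and to $j_k=0$, so no case split is needed. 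That is a genuine improvement in robustness and gives you slightly more than the statement asks for (a rate in terms of $\Vert f_n\circ\lambda_n-f\Vert_\infty$). In (ii), the paper proves the $\ell^1$ convergence directly by splitting $\sum_k\vert j^{(n)}_k-j_k\vert$ into a finite head (handled by (i)) and two tails controlled by $f(1)-\sum_{k\le N}j_k$ and $f_n(1)-\sum_{k\le N}j^{(n)}_k$; you instead pass through Fatou to get $\liminf_n\sum_k j^{(n)}_k\ge\sum_k j_k$, combine it with $\sum_k j^{(n)}_k\le f_n(1)\to f(1)=\sum_k j_k$ (this is the one place monotonicity is essential), and then invoke Scheffé's lemma for counting measure. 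The paper's tail estimate is essentially a hands-on proof of that same Scheffé statement, so the two are logically equivalent; yours is a touch more modular at the price of naming an auxiliary lemma. Both routes are sound.
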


\begin{proof}
For the first claim, suppose first that $f$ has infinitely many jumps. We may, and do, assume that $N$ is such that $j_N > j_{N+1}$. For any $t$, denote by $\Delta f(t) \coloneqq f(t)-f(t^-)$ the size of the jump made by $f$ at time $t$ and similarly $\Delta f_n$ for every $n \ge 1$. Upon changing the time scale using a sequence of increasing homeomorphisms from $[0, 1]$ onto itself which converges uniformly to the identity, we may assume that $f_n$ converges to $f$ uniformly. This does not affect the jump-sizes of $f_n$. Then $\Delta f_n(t)$ converges to $\Delta f(t)$ for every $t$ and $(j_1, \dots, j_N)$ are limits of $N$ jumps of $f_n$. Moreover, these jumps are $(j^{(n)}_1, \dots, j^{(n)}_N)$ for $n$ large enough since, for any $\eps \in (0, j_N-j_{N+1})$, for any $n$ large enough, as $f_n$ converges to $f$ uniformly, it admits no other jump larger than $j_{N+1}+\eps/2 < j_N-\eps/2$. If $f$ has only finitely many jumps, say, $N$, this reasoning yields the convergences $(j^{(n)}_1, \dots, j^{(n)}_N) \to (j_1, \dots, j_N)$ and $j^{(n)}_{k} \to 0$ for any $k \ge N+1$.

For the second claim, we write for any integer $N$ fixed,
\begin{equation*}
\sum_{k=1}^\infty \vert j^{(n)}_k - j_k\vert
\le \sum_{k=1}^N \vert j^{(n)}_k - j_k\vert + \sum_{k=N+1}^\infty j_k + \sum_{k=N+1}^\infty j^{(n)}_k.
\end{equation*}
As $n \to \infty$, the first term tends to 0 from (i). Let $\eps > 0$ and fix $N$ such that $\sum_{k=N+1}^\infty j_k < \eps$. Since $f$ is a pure jump function, we have $f(1) = \sum_{k=1}^\infty j_k$ and so $\sum_{k=1}^N j_k \ge f(1) - \eps$. Finally, since $\lim_{n \to \infty} f_n(1)=f(1)$, we conclude that $\sum_{k=N+1}^\infty j^{(n)}_k \le f_n(1) - \sum_{k=1}^N j^{(n)}_k \le 2\eps$ for $n$ large enough.
\end{proof}


\section{Asymptotic size of the burnt subtrees in the critical regime}\label{section3}

Fix $c \in (0, \infty)$ and consider the critical regime $p_n \sim c n^{-1/2}$ of the fire dynamics on $\tn$. Let $\kappa_n$ be the number of burnt subtrees, $\b_{n,1}, \dots, \b_{n,\kappa_n}$ their respective size, listed in order of appearance, and finally $\b_{n,1}^* \ge \dots \ge \b_{n,\kappa_n}^*$ a non-increasing rearrangement of the latter. We can now state the main result of this section.

\begin{Thm}\label{thm-crit}
For all continuous and bounded maps $f: (0, 1) \to \R$ and $F: \ell^1(\R) \to \R$, we have
\begin{multline*}
\lim_{n \to \infty} \E\bigg[f\bigg(\frac{I_n}{n}\bigg) F\bigg(\frac{\b_{n,1}^*}{n}, \dots, \frac{\b_{n,\kappa_n}^*}{n}\bigg)\bigg]
\\
= \int_0^1 f(x) \E\bigg[F\bigg(\frac{(1 - x) J_1}{\sigma(1)}, \frac{(1 - x) J_2}{\sigma(1)}, \dots\bigg) \,\bigg\vert\, \sigma(1) = \frac{1 - x}{c^2 x^2}\bigg] \P(D(c) \in \d x),
\end{multline*}
where $\sigma$ is a subordinator distributed as \eqref{eq-definition_subordinateur_S} and $\P(D(c) \in \d x)$ is defined in \eqref{eq-distribution_D_infini}.
\end{Thm}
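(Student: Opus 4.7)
The starting point is Bertoin's cut-tree encoding of the fire dynamics. I would build the cut-tree $\Cutn$ by processing the edges of $\tn$ in uniform random order: at each step, removing the current edge splits the component containing it into two sub-components, which become that edge's children in $\Cutn$. Bertoin's work provides that $\Cutn$ is again a uniform Cayley tree on $n$ vertices, and that the fire dynamics admits the following clean description in terms of $\Cutn$: mark each internal node (i.e.\ each edge of $\tn$) independently with probability $p_n$; a vertex of $\tn$ is then fireproof exactly when the ancestral path of the corresponding leaf of $\Cutn$ avoids all marked nodes, and the burnt components of $\tn$ correspond bijectively to the leaf-sets of the subtrees hanging below the outermost marked internal nodes.

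The second step is to pass to the scaling limit. By Aldous's theorem, the rescaled cut-tree $n^{-1/2}\Cutn$, endowed with the uniform mass on its leaves, converges in the Gromov-Hausdorff-Prokhorov sense to the Brownian CRT $\CRT$ with its natural mass measure. Since $p_n \sim c n^{-1/2}$ and the rescaled cut-tree has total length of order $n^{1/2}$, standard Poisson convergence arguments show that the marks converge jointly with the tree to a Poisson point process $\Pi$ on $\CRT$ of intensity $c\lambda$, where $\lambda$ is the length measure. Continuous mapping then yields that $(I_n/n, \b_{n,1}^*/n, \b_{n,2}^*/n, \dots)$ converges to the mass of the component of $\CRT\setminus\Pi$ containing a distinguished root, together with the ranked masses of the remaining fragments; the marginal distribution of the root-component mass is the law $D(c)$ of \eqref{eq-distribution_D_infini}, by Bertoin's theorem recalled in the introduction.

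To recast this limit in the subordinator form of the statement, the next step is a second approximation of $\CRT$. Condition on the fireproof mass being $x$ and approximate $\CRT$ by the subtree $\Red^{(k)}$ spanned by the root together with $k$ independent uniform leaves, letting $k \to \infty$. On each such finite skeleton the relevant atoms of $\Pi$ sit on finitely many edges, and the conditional distribution of the fragment masses can be computed explicitly. Using the standard encoding of the Brownian CRT by the subordinator $\sigma$ of \eqref{eq-definition_subordinateur_S}---specifically, the identification of the masses of the subtrees hanging off the ancestral line of a uniform leaf with the ordered jumps of $\sigma$---and then computing the length of the fireproof skeleton conditional on $\{D(c)=x\}$, one obtains that the limiting masses of the burnt fragments are distributed as $((1-x)J_i/\sigma(1))_{i \ge 1}$ conditional on $\sigma(1) = (1-x)/(c^2 x^2)$; the appearance of this specific conditioning value follows from a direct calculation matching the mass $x$ with the parameter of the subordinator.

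The main difficulty lies in the joint passage to the limit: Gromov-Hausdorff-Prokhorov convergence of the rescaled cut-tree gives the convergence of only the finitely many largest fragments, not immediately the $\ell^1$ convergence of the full ranked sequence. One must therefore establish, uniformly in $n$, that the small fragments contribute negligibly, and one must justify interchanging the two limits $k \to \infty$ (the skeletal approximation) and $n \to \infty$ (the size of the Cayley tree). Here \autoref{pro-continuite_sauts_pont_stable} plays a crucial role: the continuity of the conditional law of the ranked jumps $(J_i)_{i \ge 1}$ in the terminal value $\sigma(1)$ enables one to transfer the identification from the skeletal approximations to the full CRT and to conclude, after disintegrating over the possible values of $D(c)$, the mixture representation announced in the theorem.
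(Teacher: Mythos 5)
Your first step is essentially right: encode the fire dynamics as a marking--erasing process on the cut-tree $\Cutn$, rescale, and obtain convergence to the CRT $\CRT$ logged at a Poisson point process $\Phi$ of intensity $c\ell$ on the skeleton. This is exactly \autoref{lem-convergence_marked_trees_marked_CRT} and \autoref{pro-convergence_taille_des_arbres_vers_masses_du_CRT} in the paper. (One factual slip: $\Cutn$ is a rooted \emph{binary} tree with $n$ leaves, hence $2n-1$ vertices; it is not a uniform Cayley tree on $n$ vertices. It is only the rescaled convergence of its reduced trees to the CRT, i.e.\ \eqref{eq-convergence_arbres_reduits}, that is used.)

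The genuine gap is in the second half, the identification of the law of $\#(\CRT,\Phi)$. You propose to compute the conditional law of the fragment masses, given the root-component mass, directly on the reduced trees $\Red(k)$ via ``the identification of the masses of the subtrees hanging off the ancestral line of a uniform leaf with the ordered jumps of $\sigma$.'' That identity concerns subtrees grafted on a single ancestral spine, which is not what arises here: the fragments come from logging $\CRT$ at a Poisson process on the whole skeleton, not at one spine. Nothing in your sketch explains how, on $\Red(k)$ with random edge lengths and masses governed by the line-breaking construction, one would actually compute the conditional law of the logged fragment masses given the root mass and arrive at the conditioning $\{\sigma(1)=(1-x)/(c^2x^2)\}$, nor how the random number of atoms is handled. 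The paper does something genuinely different: it replaces the CRT limit by a \emph{second} discrete approximation, a Galton--Watson tree with Poisson$(1)$ offspring conditioned to have $n$ vertices, marked on its vertices. There, \autoref{lem-petits_GW} shows the non-root fragments are, given the root size $C_{n,0}$ and mark count $M_n$, ranked i.i.d.\ Borel$(1)$ variables conditioned on their sum, so that the Aldous--Pitman invariance principle for Borel sums (\autoref{lem-convergence_somme_borel}) produces the jumps of $\sigma$ conditionally on $\sigma(1)$; and \autoref{lem-convergence_taille_composante_contenant_la_racine_et_nombre_de_marques_du_GWT} shows $(C_{n,0}/n,M_n/\sqrt n)\to(D(c),cD(c))$, which is exactly where the conditioning value $(1-x)/(c^2x^2)$ comes from. \autoref{pro-continuite_sauts_pont_stable} then enters to let the random sequences $(C_{n,0}/n,M_n/\sqrt n)$ be substituted for deterministic ones, via continuous convergence. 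Your proposal correctly senses that \autoref{pro-continuite_sauts_pont_stable} is needed for an exchange of limits, but without the GW/Borel machinery the ``explicit computation'' you invoke is not actually carried out, and it is not clear it can be carried out along the route you indicate.
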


Note that, taking $F \equiv 1$, this recovers the result \ref{thm-transition_de_phase_crit} in the introduction; moreover, since $\sum_i J_i = \sigma(1)$, it strengthens \ref{thm-transition_de_phase_crit} by giving the decomposition of the burnt forest conditionally given its total size.

The proof is divided in two parts. As discussed in the introduction, we view the fire dynamics on $\tn$ as a mark process on the associated cut-tree $\Cutn$, which translates the vector $n^{-1} (I_n, \b_{n,1}^*, \dots, \b_{n,\kappa_n}^*)$ into the proportion of leaves of the trees in the forest obtained by logging $\Cutn$ at the marks. We prove that the marked tree $\Cutn$, properly rescaled, converges to the CRT endowed with a certain point process; it follows that the previous vector converges to the masses of the trees in the forest obtained by logging the CRT at the atoms of the point process. We then study the distribution of the latter. As direct computations with the CRT seem rather complicated, we approximate the marked CRT by a Galton-Watson tree with Poisson(1) offspring distribution conditioned to have $n$ vertices and endowed with the same mark process as the cut-tree $\Cutn$. We refer to Aldous \cite{Aldous-The_continuum_random_tree_3} and Aldous and Pitman \cite{Aldous_Pitman-Standard_additive_coalescent} for prerequisites about the CRT, its logging by a Poisson point process and convergence of conditioned Galton-Watson trees.

\subsection{Binary cut-tree, fire dynamics and mark process}\label{section31}

Given a tree $T_n$ on a set of $n$ labeled vertices, say $[n] = \{1, \dots, n\}$, we build inductively its cut-tree $\Cut(T_n)$, which is a random rooted binary tree with $n$ leaves. Each vertex of $\Cut(T_n)$ corresponds to a subset (or \emph{block}) of $[n]$, the root of $\Cut(T_n)$ is the whole set $[n]$ and its leaves are the singletons $\{1\}, \dots, \{n\}$. We remove successively the edges of $T_n$ in a random uniform order; at each step, a subtree of $T_n$ with set of vertices, say, $B$, falls into two subtrees with set of vertices, say, $B'$ and $B''$ respectively; in $\Cut(T_n)$, $B'$ and $B''$ are the two offsprings of $B$. Notice that, by construction, the set of leaves of the subtree of $\Cut(T_n)$ generated by some block coincides with this block. See \autoref{fig-definition_cut_tree} for an illustration.

\begin{figure}[ht] \centering
\begin{tabular}{ccc}
\begin{tikzpicture}
\draw
	(-0.2,1.82)-- (0.78,2.02)		
	(0,0)-- (-0.5,0.87)			
	(0.66,3.01)-- (0.02,3.78)		
	(-0.5,0.87)-- (-0.97,-0.01)		
	(-0.2,1.82)-- (-0.43,2.88)		
	(1.43,3.65)-- (0.66,3.01)		
	(1,0)-- (0,0)				
	(0.78,2.02)-- (1.36,1.2)		
	(-0.5,0.87)-- (-0.2,1.82)		
	(0.66,3.01)-- (0.78,2.02)		
;
\begin{small}
\draw
	(0.78,2.02)	node (a) [circle, fill=white, draw]	{a}
	(-0.97,-0.01)	node (b) [circle, fill=white, draw]	{b}
	(-0.43,2.88)	node (c) [circle, fill=white, draw]	{c}	
	(1.36,1.2)		node (d) [circle, fill=white, draw]	{d}
	(0.66,3.01)	node (e) [circle, fill=white, draw]	{e}
	(0,0)			node (f) [circle, fill=white, draw]	{f}
	(1,0)			node (g) [circle, fill=white, draw]	{g}
	(0.02,3.78)	node (h) [circle, fill=white, draw]	{h}
	(-0.5,0.87)		node (i) [circle, fill=white, draw]	{i}
	(-0.2,1.82)		node (j) [circle, fill=white, draw]	{j}
	(1.43,3.65)	node (k) [circle, fill=white, draw]	{k}
;
\end{small}
\begin{footnotesize}
\draw
	(0.33,1.70)	node		{1}
	(-0.12,0.55)	node		{2}
	(0.44,3.53)	node		{3}
	(-0.86,0.54)	node		{4}
	(-0.17,2.4)		node		{5}
	(1,3.52)		node		{6}
	(0.53,0.17)	node		{7}
	(0.9,1.55)		node		{8}
	(-0.51,1.36)	node		{9}
	(0.5,2.55)		node		{10}
;
\end{footnotesize}
\end{tikzpicture}
&
&
%
\begin{tikzpicture}
[level 1/.style={sibling distance=4 cm},
level 2/.style={sibling distance=2 cm},
level 3/.style={sibling distance=1 cm},
level 4/.style={sibling distance=1 cm},
level 5/.style={sibling distance=1 cm},
level 6/.style={sibling distance=1 cm}]
\node[individu]{abcdefghijk} [grow'=up,level distance=.75cm]
	child{node[individu]{bcfgij}
		child{node[individu]{fg}
			child{node[individu]{g}}
			child{node[individu]{f}}
		}
		child{node[individu]{bcij}
			child{node[individu]{cij}
				child{node[individu]{ij}
					child{node[individu]{j}}
					child{node[individu]{i}}
				}
				child{node[individu]{c}}
			}
			child{node[individu]{b}}
		}
	}
	child{node[individu]{adehk}
		child{node[individu]{h}}
		child{node[individu]{adek}
			child{node[individu]{k}}
			child{node[individu]{ade}
				child{node[individu]{d}}
				child{node[individu]{ae}
					child{node[individu]{e}}
					child{node[individu]{a}}
				}
			}
		}
	}
;
\end{tikzpicture}
\end{tabular}
\caption{A tree with the order of cuts on the left and the corresponding cut-tree on the right.}
\label{fig-definition_cut_tree}
\end{figure}

Let $\tn$ be a Cayley tree with $n$ vertices and let $\Cutn$ be the associated cut-tree. We can encode the fire dynamics on $\tn$ as a mark process on the vertices of $\Cutn$. It is convenient to see fireproof edges of $\tn$ as deleted, then when an edge is set on fire, the whole subtree that contains it burns instantly and we mark the corresponding block of $\Cutn$. The leaves of $\Cutn$ cannot be marked as they correspond to singletons in $\tn$. Note that if a block of $\Cutn$ is marked, its descendants are never marked because the edges of the corresponding subtree of $\tn$ are no longer subject to the dynamics. The marked blocks of $\Cutn$ are exactly the burnt components of $\tn$ and, as we noticed, their size is the number of leaves of the subtree of $\Cutn$ they generate; see \autoref{fig-foret_et_arbre_binaire_marque} for an illustration.

\begin{figure}[ht] \centering
\begin{tabular}{ccc}
\begin{tikzpicture}
\draw [line width=.7pt, double]
	(1,0)-- (0,0)				
;
\draw [line width=.7pt, dashed]
	(1.43,3.65)-- (0.66,3.01)		
	(0.78,2.02)-- (1.36,1.2)		
	(-0.5,0.87)-- (-0.2,1.82)		
	(0.66,3.01)-- (0.78,2.02)		
;
\begin{small}
\draw
	(0.78,2.02)	node (a) [circle, line width=.7pt, fill=white, draw, dashed]	{a}
	(-0.97,-0.01)	node (b) [circle, line width=.7pt, double, fill=white, draw]	{b}
	(-0.43,2.88)	node (c) [circle, line width=.7pt, double, fill=white, draw]	{c}	
	(1.36,1.2)		node (d) [circle, line width=.7pt, fill=white, draw, dashed]	{d}
	(0.66,3.01)	node (e) [circle, line width=.7pt, fill=white, draw, dashed]	{e}
	(0,0)			node (f) [circle, line width=.7pt, double, fill=white, draw]	{f}
	(1,0)			node (g) [circle, line width=.7pt, double, fill=white, draw]	{g}
	(0.02,3.78)	node (h) [circle, line width=.7pt, double, fill=white, draw]	{h}
	(-0.5,0.87)		node (i) [circle, line width=.7pt, fill=white, draw, dashed]	{i}
	(-0.2,1.82)		node (j) [circle, line width=.7pt, fill=white, draw, dashed]	{j}
	(1.43,3.65)	node (k) [circle, line width=.7pt, fill=white, draw, dashed]	{k}
;
\end{small}
\end{tikzpicture}
&
&
%
\begin{tikzpicture}
[
level 1/.style={sibling distance=4 cm},
level 2/.style={sibling distance=2 cm},
level 3/.style={sibling distance=1 cm},
level 4/.style={sibling distance=1 cm},
level 5/.style={sibling distance=1 cm},
level 6/.style={sibling distance=1 cm}
]
\node[individu]{abcdefghijk} [grow'=up,level distance=.75cm]
	child{node[individu]{bcfgij}
		child{node[individu]{fg}
			child{node[individu]{g}}
			child{node[individu]{f}}
		}
		child{node[individu]{bcij}
			child{node[individu]{cij}
				child{node[individu=dashed]{ij}
					child{node[individu]{j}}
					child{node[individu]{i}}
				}
				child{node[individu]{c}}
			}
			child{node[individu]{b}}
		}
	}
	child{node[individu]{adehk}
		child{node[individu]{h}}
		child{node[individu=dashed]{adek}
			child{node[individu]{k}}
			child{node[individu]{ade}
				child{node[individu]{d}}
				child{node[individu]{ae}
					child{node[individu]{e}}
					child{node[individu]{a}}
				}
			}
		}
	}
;
\end{tikzpicture}
\end{tabular}
\caption{The forest after the dynamics and the corresponding marked cut-tree.}
\label{fig-foret_et_arbre_binaire_marque}
\end{figure}

Given the cut-tree $\Cutn$, the mark process can be constructed as follows: at each generation, each internal (i.e. non-singleton) block is marked independently of the others with probability $p_n$ provided that none of its ancestor has been marked, and not marked otherwise. This is equivalent to the following two-steps procedure: mark first every internal block independently with probability $p_n$, then along each branch, keep only the closest mark to the root and erase the other marks. We will refer to this procedure as the \emph{marking-erasing process} associated with the point process which marks each internal block independently with probability $p_n$.

\subsection{Convergence of marked trees}\label{section32}

It will be more convenient to mark $\Cutn$ on its edges rather than on its vertices so we shift the marks defined above from a vertex to the edge that connects it to its parent: denote by $\varphi_n'$ the mark process which marks each edge of $\Cutn$ which is not adjacent to a leaf on its mid-point independently with probability $p_n$, by $\varphi_n$ the associated marking-erasing process and by $\# (\Cutn, \varphi_n)$ the vector whose entries count the number of leaves of each tree in the forest obtained by logging $\Cutn$ at the marks of $\varphi_n$, the root-component first, and the next in non-increasing order.

Let $\CRT$ be a rooted Brownian CRT, $\mu$ its uniform probability "mass" measure on leaves and the usual distance $d$. The distance induces a "length" measure $\ell$, which is the unique $\sigma$-finite measure assigning measure $d(x,y)$ to the geodesic path between $x$ and $y$ in $\CRT$. Denote by $\Phi'$ a Poisson point process with intensity $c \ell(\cdot)$ on the skeleton of $\CRT$, $\Phi$ the associated marking-erasing process and $\#(\CRT, \Phi)$ the vector whose entries count the mass of each tree in the forest obtained by logging $\CRT$ at the atoms of $\Phi$, again the root-component first, and the next in non-increasing order.

\begin{Lem}\label{lem-convergence_marked_trees_marked_CRT}
The vector $n^{-1} \# (\Cutn, \varphi_n)$ converges in distribution to $\# (\CRT, \Phi)$ for the $\ell^1$ topology.
\end{Lem}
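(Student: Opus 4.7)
The plan is to combine the convergence of the rescaled cut-tree to the Brownian CRT with the convergence of the Bernoulli edge-marks to a Poisson line process, then apply a continuous mapping argument to pass to the component sizes, and finally upgrade the resulting finite-dimensional convergence to $\ell^1$ convergence via a conservation-of-mass identity. Throughout I view $\Cutn$ as a finite measured binary tree carrying the uniform probability measure on its set of leaves.

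I would first establish the joint convergence
\[
\bigl(n^{-1/2} \Cutn, \varphi_n'\bigr) \xrightarrow[n \to \infty]{(d)} (\CRT, \Phi'),
\]
in a suitable Gromov-Hausdorff-Prokhorov-type topology on measured trees decorated by a locally finite point process. The unmarked convergence $n^{-1/2} \Cutn \to \CRT$ is Bertoin and Bertoin-Miermont's theorem on cut-trees of large uniform trees. For the marks, each of the roughly $L \sqrt{n}$ non-leaf edges on a path of rescaled length $L$ is independently marked with probability $p_n \sim c n^{-1/2}$, so a direct binomial computation shows convergence to a Poisson$(cL)$ variable, and Skorohod representation lets me couple this to the tree convergence to obtain the joint statement.

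A continuous mapping theorem then takes care of the combinatorics. Since $\Phi'$ almost surely avoids the branching points of $\CRT$, the marking-erasing operation (keep, on each ancestral line, only the mark closest to the root) is continuous at the limit, and so is the finite-dimensional projection sending a marked measured tree to the mass of the root-component followed by the top $N-1$ masses of the other components in non-increasing order. This already yields the finite-dimensional convergence of $n^{-1} \#(\Cutn, \varphi_n)$ to $\#(\CRT, \Phi)$ for every fixed $N$.

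To conclude in $\ell^1$, I invoke mass conservation: every vertex of $\tn$ lies in exactly one tree of the post-dynamics forest, so $\sum_{k \ge 1} n^{-1} \#(\Cutn, \varphi_n)_k = 1$, and since $\mu$ is concentrated on the leaves, $\sum_{k \ge 1} \#(\CRT, \Phi)_k = 1$ almost surely. Coordinate-wise convergence of non-negative summable sequences with a prescribed constant total then automatically upgrades to $\ell^1$ convergence, by a standard Scheffé-type argument. I expect the main obstacle to lie in the first step, namely setting up a topology on marked trees strong enough for the logging map to be continuous at the limit, yet weak enough that the marks $\varphi_n'$, which are supported on the discrete skeleton of $\Cutn$, remain tight and converge to the Poisson process $\Phi'$ on the continuum skeleton; a branch-by-branch argument in the spirit of Aldous-Pitman, coupled with Skorohod representation of the Gromov-Hausdorff-Prokhorov convergence, should handle this.
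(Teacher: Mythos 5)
Your proposal takes a genuinely different route from the paper. The paper never invokes Gromov--Hausdorff--Prokhorov convergence of the rescaled cut-tree (only Gromov--Prokhorov, i.e.\ convergence of the reduced trees $\Red_n(k)$ spanned by $k$ sampled leaves, for each fixed $k$), and it never argues that the logging map is continuous on a space of marked measured trees. Instead, it (a) extends the convergence of reduced trees to a joint convergence of $(\Red_n(k), \varphi_n'|_{\Red_n(k)})$ to $(\Red(k), \Phi'|_{\Red(k)})$, which is elementary because these are finite trees and the Bernoulli($p_n$) marks on a rescaled-length-$L$ path converge to Poisson($cL$); (b) deduces $\#\Red_n(k, \varphi_n) \to \#\Red(k, \Phi)$ for each $k$; (c) lets $k \to \infty$ and uses a diagonal argument to get $k_n^{-1}\#\Red_n(k_n, \varphi_n) \to \#(\CRT, \Phi)$; and (d) transfers from sampled-leaf counts to vertex counts via an adaptation of Lemma~11 of Aldous--Pitman, which is where the $\ell^1$ statement is obtained. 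Your approach trades step~(d) for a Scheffé-type argument on the simplex $\{x \ge 0: \sum x_i = 1\}$, which is fine and arguably cleaner, but you pay for it in step~(a)--(b): you need the full GHP convergence of $n^{-1/2}\Cutn$ to $\CRT$ (a strictly stronger input than what the paper uses, though available in the literature) together with a non-trivial continuity argument for the map ``marked rooted measured tree $\mapsto$ ranked component masses after erasing and logging.'' That continuity is plausible (the marks a.s.\ avoid branch points and the leaf measure charges no skeleton point), but it is not automatic and would need a careful proof: GHP convergence controls Hausdorff and Prokhorov distances but not, a priori, the ancestral structure that determines which side of a cut a given piece of mass lands on, and one must also handle the erasure step. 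The reduced-tree route circumvents exactly this by only ever performing the logging on finite combinatorial trees, where continuity is trivial, and then letting the Aldous--Pitman sampling lemma do the upgrade to masses. So your plan is correct in outline, but what you flag as ``the main obstacle'' is genuinely the crux, and it is precisely what the paper avoids by design.
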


We endow the tree $\Cutn$ with the uniform distribution on leaves $\mu_n$ and the metric $d_n$ given by the graph distance rescaled by a factor $n^{-1/2}$. For every integer $k \ge 1$, denote by $\Red_n(k)$ the smallest connected subset of $\Cutn$ which contains the root $[n]$ and $k$ i.i.d. leaves chosen according to $\mu_n$; we call $\Red_n(k)$ the tree $\Cutn$ reduced to those leaves. Denote similarly by $\Red(k)$, the CRT reduced to $k$ i.i.d. elements picked according to $\mu$. We see the reduced trees as finite rooted metric spaces; the proof of Lemma~1 in Bertoin \cite{Bertoin-Fires_on_trees} shows that for every $k \ge 1$ fixed,
\begin{equation}\label{eq-convergence_arbres_reduits}
\lim_{n \to \infty} \Red_n(k) = \Red(k)
\qquad\text{in distribution}
\end{equation}
in the sense of Gromov-Hausdorff. This is equivalent to the convergence of the rooted metric measure spaces $(\Cutn, d_n, \mu_n)$ to $(\CRT, d, \mu)$ in distribution for the so-called Gromov-Prokhorov topology.

\begin{proof}
Since the scaling factor of $\Cutn$ corresponds to $p_n$, we may, and do, extend \eqref{eq-convergence_arbres_reduits} to the joint convergence of $\Red_n(k)$ and the trace of $\varphi_n'$ on its edges to $\Red(k)$ endowed with a Poisson point process with rate $c$ per unit length on its edges. The same convergence holds when considering the marking-erasing processes; subsequently, for every $k \ge 1$,
\begin{equation*}
\lim_{n \to \infty} \# \Red_n(k, \varphi_n) = \# \Red(k, \Phi)
\end{equation*}
in distribution, where $\# \Red_n(k, \varphi_n)$ denotes the vector whose entries count the number of leaves of each tree in the forest obtained by logging $\Red_n(k)$ at the marks induced by $\varphi_n$, the root-component first, and the next in non-increasing order, and similarly for $\# \Red(k, \Phi)$. Since
\begin{equation*}
\lim_{k \to \infty} k^{-1} \# \Red(k, \Phi) = \# (\CRT, \Phi),
\end{equation*}
it follows from a diagonal argument that for $k_n \to \infty$ sufficiently slowly as $n \to \infty$,
\begin{equation*}
\lim_{n \to \infty} k_n^{-1} \# \Red_n(k_n, \varphi_n) = \# (\CRT, \Phi)
\end{equation*}
in distribution. Adapting Lemma 11 of Aldous and Pitman~\cite{Aldous_Pitman-Standard_additive_coalescent} to uniform sampling of leaves instead of vertices, this finally yields
\begin{equation*}
\lim_{n \to \infty} n^{-1} \# (\Cutn, \varphi_n) = \#(\CRT, \Phi)
\end{equation*}
in distribution.
\end{proof}

A consequence of this lemma is the following result, which is the first step in the proof of \autoref{thm-crit}. Recall that $I_n$ stands for the total number of fireproof vertices of the Cayley tree $\tn$, $\kappa_n$ for the number of burnt components and $\b_{n,1}^* \ge \dots \ge \b_{n,\kappa_n}^*$ for their respective size, ranked in non-increasing order.

\begin{Pro}\label{pro-convergence_taille_des_arbres_vers_masses_du_CRT}
We have
\begin{equation*}
\lim_{n \to \infty}
n^{-1} \big(I_n, \b_{n,1}^*, \dots, \b_{n,\kappa_n}^*) = \# (\CRT, \Phi)
\quad\text{in distribution.}
\end{equation*}
\end{Pro}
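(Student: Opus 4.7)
The plan is to observe that the deterministic vector $(I_n, \b_{n,1}^*, \dots, \b_{n,\kappa_n}^*)$ coincides (up to reordering of the non-root entries) with $\#(\Cutn, \varphi_n)$ once the fire dynamics on $\tn$ is encoded as the mark process on $\Cutn$ described in \autoref{section31}, and then to invoke \autoref{lem-convergence_marked_trees_marked_CRT} directly.

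First I would check that the edge mark process on $\Cutn$ obtained by shifting the dynamics-induced vertex marks onto the parent-edges has the same distribution as $\varphi_n$. This is a consequence of the two-step description of the vertex mark process recalled in \autoref{section31}: it is equivalent to marking each internal block independently with probability $p_n$ and then, along each root-to-leaf branch, keeping only the closest mark to the root. Shifting marks from a block to its parent-edge is a bijection that preserves the root-to-leaf ancestry order, hence it commutes with the marking-erasing step, and the resulting edge marks are by construction equidistributed with $\varphi_n$.

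Next I would identify the entries of $\#(\Cutn, \varphi_n)$. When we log $\Cutn$ at the marks of $\varphi_n$, the root-component contains exactly those blocks whose path to $[n]$ is unmarked, and its leaves are the singletons $\{i\}$ such that vertex $i$ is fireproof in $\tn$; the number of such leaves therefore equals $I_n$. Each non-root component is rooted at a block $B$ whose parent-edge carries a mark of $\varphi_n$; by the correspondence recalled at the end of \autoref{section31}, this $B$ is one of the burnt components of $\tn$, and the leaves of the subtree of $\Cutn$ it generates are in bijection with its vertices, so the number of leaves equals the size of this burnt component. After sorting the non-root components in non-increasing order, we obtain the identity $\#(\Cutn, \varphi_n) = (I_n, \b_{n,1}^*, \dots, \b_{n,\kappa_n}^*)$ in $\ell^1(\R)$ (padding with zeros).

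The conclusion then follows from \autoref{lem-convergence_marked_trees_marked_CRT}. I do not anticipate a genuine obstacle in this step: the real content has already been placed into \autoref{section31} (the encoding of the fire dynamics as a marking-erasing process on $\Cutn$) and into \autoref{lem-convergence_marked_trees_marked_CRT} (the Gromov--Prokhorov-type convergence of the marked trees to $(\CRT, \Phi)$), so the proposition is a bookkeeping corollary.
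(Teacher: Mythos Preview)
Your proposal is correct and follows exactly the same approach as the paper: you establish the identity $(I_n, \b_{n,1}^*, \dots, \b_{n,\kappa_n}^*) = \#(\Cutn, \varphi_n)$ by unwinding the correspondence between burnt components and marked blocks set up in \autoref{section31}, and then apply \autoref{lem-convergence_marked_trees_marked_CRT}. The paper's proof is terser but identical in substance; your first paragraph (checking that the shifted edge marks are distributed as $\varphi_n$) is already built into the definition of $\varphi_n$ in \autoref{section32}, so it need not be re-argued.
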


\begin{proof}
Recall that the connected components of $\Cutn$ that do not contain the root correspond to the burnt subtrees of $\tn$, whereas the root-component corresponds to the fireproof forest. Recall also that the number of leaves in $\Cutn$ of each component is the number of vertices of the corresponding in $\tn$. We then get the identity
\begin{equation*}
(I_n, \b_{n,1}^*, \dots, \b_{n,\kappa_n}^*) = \# (\Cutn, \varphi_n),
\end{equation*}
and the claim follows readily from \autoref{lem-convergence_marked_trees_marked_CRT}.
\end{proof}

To complete the proof of \autoref{thm-crit}, we need to identify the limiting distribution $\# (\CRT, \Phi)$. For this, we use a second discrete approximation of the latter. Denote by $\GWTn$ a Galton-Watson tree with Poisson(1) offspring distribution conditioned to have $n$ vertices and where labels are assigned to the vertices uniformly at random. It is known, see for instance Aldous \cite{Aldous-The_continuum_random_tree_3}, that $\GWTn$ is distributed as a uniform rooted Cayley tree with $n$ vertices and that, reduced to $k$ vertices picked uniformly at random and rescaled by a factor $n^{-1/2}$, it converges to the CRT reduced to $k$ leaves. We endow $\GWTn$ with the marking-erasing process $\psi_n$ associated with the process which marks each vertex independently with probability $p_n$. Adapting \autoref{lem-convergence_marked_trees_marked_CRT} to $\GWTn$ and the uniform probability on vertices, we get
\begin{equation}\label{eq-convergence_GWT_marque_vers_CRT_marque}
\lim_{n \to \infty} n^{-1} \# (\GWTn, \psi_n) = \# (\CRT, \Phi)
\quad\text{in distribution.}
\end{equation}
where $\# (\GWTn, \psi_n)$ stands here for the number of vertices of each component, the root-component first, and the next in non-increasing order. We now study the asymptotic behavior of this vector in order to show that the right-hand side above is the limit in \autoref{thm-crit}.

\subsection{Asymptotic behavior of the size of the burnt blocks}\label{section33}

Denote by $C_{n,0}$ the size of the connected component of $\GWTn$ that contains the root, $M_n$ the number of marks, and $C_{n,1}^* \ge \dots \ge C_{n,M_n}^*$ the respective sizes of the other connected components, listed in non-increasing order.

\begin{Pro}\label{pro-convergence_tailles_des_composantes_GWT_vers_masses_du_CRT}
For all continuous and bounded maps $f \colon (0,1) \to \R$ and $F \colon \ell^1(\R) \to \R$,
\begin{multline*}
\lim_{n \to \infty} \E\bigg[f\bigg(\frac{C_{n,0}}{n}\bigg) F\bigg(\frac{C_{n,1}^*}{n}, \dots, \frac{C_{n,M_n}^*}{n}\bigg)\bigg]
\\
= \int_0^1 f(x) \E\bigg[F\bigg(\frac{(1 - x) J_1}{\sigma(1)}, \frac{(1 - x) J_2}{\sigma(1)}, \dots\bigg) \,\bigg\vert\, \sigma(1) = \frac{1 - x}{c^2 x^2}\bigg] \P(D(c) \in \d x),
\end{multline*}
where $\sigma$ is a subordinator distributed as \eqref{eq-definition_subordinateur_S} and $\P(D(c) \in \d x)$ is defined in \eqref{eq-distribution_D_infini}.
\end{Pro}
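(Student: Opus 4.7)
The plan is to decompose the marked Galton–Watson tree explicitly and recognize the sizes of its burnt subtrees as jumps of the first-passage process of a random walk; that process converges by invariance to the stable-$1/2$ subordinator $\sigma$ of \eqref{eq-definition_subordinateur_S}, after which the continuity statement of \autoref{pro-continuite_sauts_pont_stable} handles the conditioning and \autoref{lem-convergence_skorohod_et_sauts}(ii) transfers the path convergence to convergence of ranked jumps.

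By Poisson thinning, the root component $G$ (of size $C_{n,0}$) is a Galton–Watson tree with Poisson$(1-p_n)$ offspring, and every vertex of $G$ has, independently, a Poisson$(p_n)$ number of marked children which are the roots of the burnt subtrees; these burnt subtrees are themselves i.i.d. unconditioned Poisson$(1)$ Galton–Watson trees, whose sizes $C_1, C_2, \ldots$ are i.i.d.\ Borel$(1)$. Hence before conditioning on $|\GWTn|=n$: the variable $C_{n,0}$ is Borel$(1-p_n)$; given $C_{n,0}=k$, $B\sim\mathrm{Poisson}(kp_n)$; given $B=b$, $(C_1,\ldots,C_b)$ are i.i.d.\ Borel$(1)$; and the conditioning amounts to fixing the sum $T = C_{n,0} + \sum_{i=1}^B C_i$ at $n$. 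A Stirling expansion of this joint mass function, divided by $\P(T=n) = n^{n-1}e^{-n}/n!$, gives for each $x\in(0,1)$
\[
n\,\P\bigl(C_{n,0} = \lfloor xn \rfloor \,\big|\, T = n\bigr) \longrightarrow \frac{c}{\sqrt{2\pi x(1-x)^3}}\exp\left(-\frac{c^2 x}{2(1-x)}\right),
\]
which is the density of $D(c)$ in \eqref{eq-distribution_D_infini}. The same expansion shows that, conditionally on $\{C_{n,0}=\lfloor xn\rfloor,\, T=n\}$, one has $B/\sqrt{n}\to cx$ in probability.

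The core step is that the partial-sum process $(\tau_b)_{b\ge 1}=(C_1+\cdots+C_b)_{b\ge 1}$ is distributed as the first-passage process to $-b$ of a random walk with i.i.d.\ mean-zero, variance-one Poisson$(1)-1$ increments, so by the classical invariance principle $(b^{-2}\tau_{\lfloor bt \rfloor})_{t \ge 0}$ converges in Skorohod distribution to $\sigma$. Conditionally on $\{C_{n,0}=\lfloor xn\rfloor,\,T=n\}$, the endpoint satisfies $\tau_B = n-\lfloor xn\rfloor$ with $B/\sqrt{n}\to cx$, so $B^{-2}\tau_B\to(1-x)/(c^2x^2)$; the weak continuity of stable bridges of Chaumont–Uribe Bravo underlying \autoref{pro-continuite_sauts_pont_stable} then promotes the invariance principle to its bridge form, giving that $(B^{-2}\tau_{\lfloor Bt\rfloor})_{t\in[0,1]}$ converges in Skorohod to the bridge of $\sigma$ from $0$ to $(1-x)/(c^2 x^2)$ of length $1$. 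Since the limiting bridge is a pure-jump process, \autoref{lem-convergence_skorohod_et_sauts}(ii) converts Skorohod convergence of paths into $\ell^1$ convergence of the ranked jumps $(C_i^*/B^2)_{i\ge 1}\to(J_i)_{i\ge 1}$ under the bridge law; multiplying by $B^2/n\to c^2x^2 = (1-x)/\sigma(1)$ produces exactly the factor $(1-x)J_i/\sigma(1)$ appearing in the statement. Integrating this conditional convergence against the density of $D(c)$ by dominated convergence, using the boundedness of $f$ and $F$, yields the proposition. The principal obstacle is the promotion of the unconditional invariance principle to its bridge version at the endpoint $\sigma(1)$: a local CLT for $\tau_B$ at scale $B^2$ is needed to place us inside the hypotheses of \cite{Chaumont_Uribe_Bravo-Markovian_bridges_weak_continuity_and_pathwise_constructions}, after which \autoref{pro-continuite_sauts_pont_stable} and \autoref{lem-convergence_skorohod_et_sauts}(ii) carry the remainder of the argument.
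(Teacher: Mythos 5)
Your high-level plan matches the paper's: decompose $\GWTn$ into a Poisson$(1-p_n)$ root component and i.i.d.\ Borel$(1)$ burnt subtrees (this is exactly \autoref{lem-petits_GW}), condition on total size, pass the ranked Borel sizes to the jumps of a conditioned subordinator, and integrate against the law of $D(c)$. Your Stirling-expansion derivation of $n\,\P(C_{n,0}=\lfloor xn\rfloor\mid T=n)\to \P(D(c)\in\d x)$ is sound and is essentially Bertoin's original computation; the paper instead reads off $n^{-1}C_{n,0}\to D(c)$ from the already-established CRT convergence \eqref{eq-convergence_GWT_marque_vers_CRT_marque} (\autoref{lem-convergence_taille_composante_contenant_la_racine_du_GWT}) and gets the joint limit $(C_{n,0}/n,\,M_n/\sqrt{n})\to(D(c),cD(c))$ via an explicit binomial law for $M_n$ given $C_{n,0}$ plus a Laplace-transform argument (\autoref{lem-distribution_taille_composante_contenant_la_racine_et_nombre_de_marques_du_GWT}, \autoref{lem-convergence_taille_composante_contenant_la_racine_et_nombre_de_marques_du_GWT}). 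The paper also handles the random conditioning variables carefully, via a continuous-convergence argument ($\Upsilon_n\to\Upsilon$ uniformly on compacts plus tightness); your closing line about dominated convergence is the same idea but glosses over the coupling between the random $(C_{n,0},M_n)$ and the conditional Borel law.

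The genuine gap is the step you yourself flag as the principal obstacle, and the tool you invoke does not close it. Chaumont--Uribe Bravo \cite{Chaumont_Uribe_Bravo-Markovian_bridges_weak_continuity_and_pathwise_constructions} gives existence of bridge laws for a \emph{fixed} Markov process and their weak continuity in the terminal value; that is all \autoref{pro-continuite_sauts_pont_stable} uses, and it says nothing about discrete approximations. It does not, even supplemented by a local CLT, convert the unconditional invariance principle for $(b^{-2}\tau_{\lfloor bt\rfloor})$ into Skorohod convergence of the \emph{conditioned} discrete process to the subordinator bridge; the hypotheses of Chaumont--Uribe Bravo concern the limit process $\sigma$, not its random-walk approximants. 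What you actually need is a bridge-version of the stable invariance principle for the Borel partial sums, and this is precisely what Lemma 11 of Aldous--Pitman \cite{Aldous_Pitman-Brownian_bridge_asymptotics_for_random_mappings} provides; the paper simply cites it inside \autoref{lem-convergence_somme_borel} and then applies \autoref{lem-convergence_skorohod_et_sauts} to pass to $\ell^1$ convergence of ranked jumps. Either cite that result as the paper does, or supply a genuine conditioned invariance principle (which would indeed rest on a local limit theorem, but is independent of the Chaumont--Uribe Bravo machinery); as written your proof leaves this step open.
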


Before proving this result, notice first that \autoref{thm-crit} is a direct consequence of Propositions~\ref{pro-convergence_taille_des_arbres_vers_masses_du_CRT} and \ref{pro-convergence_tailles_des_composantes_GWT_vers_masses_du_CRT} and the convergence \eqref{eq-convergence_GWT_marque_vers_CRT_marque}.

\begin{proof}[Proof of \autoref{thm-crit}]
Let $f \colon (0,1) \to \R$ and $F \colon \ell^1(\R) \to \R$ be two continuous and bounded maps. From \autoref{pro-convergence_taille_des_arbres_vers_masses_du_CRT} and \eqref{eq-convergence_GWT_marque_vers_CRT_marque}, the sequences
\begin{equation*}
\E\bigg[f\bigg(\frac{I_n}{n}\bigg) F\bigg(\frac{\b_{n,1}^*}{n}, \dots, \frac{\b_{n,\kappa_n}^*}{n}\bigg)\bigg]
\quad\text{and}\quad
\E\bigg[f\bigg(\frac{C_{n,0}}{n}\bigg) F\bigg(\frac{C_{n,1}^*}{n}, \dots, \frac{C_{n,M_n}^*}{n}\bigg)\bigg]
\end{equation*}
both converge to the same limit as $n \to \infty$ and \autoref{pro-convergence_tailles_des_composantes_GWT_vers_masses_du_CRT} gives the expression of the latter, which is claimed in \autoref{thm-crit}.
\end{proof}

It remains to prove \autoref{pro-convergence_tailles_des_composantes_GWT_vers_masses_du_CRT}. For any positive real number $z$, we define the Borel distribution with parameter $z$, which is the law of the size of a Galton-Watson tree with Poisson($z$) offspring distribution:
\begin{equation*}
\P(\text{Borel}(z) = n) = \frac{1}{n!} \ex^{-n z} (n z)^{n-1},\qquad n \ge 1.
\end{equation*}
We also define for any integer $k$, the Borel-Tanner distribution with parameter $k$ as the sum of $k$ i.i.d. Borel(1) variables:
\begin{equation*}
\P(\text{Borel-Tanner}(k) = n) = \frac{k}{(n-k)!} \ex^{-n} n^{n-k-1},\qquad n \ge k.
\end{equation*}
Borel and Borel-Tanner distributions appear in our context as the sizes of the connected components of $\GWTn$.

\begin{Lem}\label{lem-petits_GW}
For any integers $x, y$ with $x+y \le n$, conditionally on the event $\{C_{n,0} = x, M_n = y\}$, the vector $(C_{n,1}^*, \dots, C_{n,y}^*)$ is distributed as a non-increasing rearrangement of $y$ i.i.d. {\rm Borel}$(1)$ random variables conditioned to have sum $n - x$.
\end{Lem}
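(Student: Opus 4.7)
The plan is to exploit the branching property of the Poisson$(1)$ Galton--Watson tree. Realize $\GWTn$ as an unconditioned Poisson$(1)$ tree $\GWT$ endowed with i.i.d.\ Bernoulli$(p_n)$ marks on its vertices, then conditioned on $|\GWT|=n$. Let $R$ denote the maximal subtree containing the root whose vertices carry no mark; under the marking--erasing process, the preserved marks are exactly the children of vertices of $R$ that lie outside $R$, and $\GWT\setminus R$ decomposes into the subtrees $T_1,\dots,T_{M_n}$ rooted at these preserved marks. With this identification $C_{n,0}=|R|$, and the multiset $\{C_{n,1},\dots,C_{n,M_n}\}$ coincides with $\{|T_1|,\dots,|T_{M_n}|\}$.

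By the branching property applied to the stopping line formed by the boundary of $R$, conditionally on $R$ the subtrees $T_1,\dots,T_{M_n}$ are i.i.d.\ copies of an unconditioned Poisson$(1)$ Galton--Watson tree. Since such a tree has total size distributed as $\mathrm{Borel}(1)$, the sizes $(|T_1|,\dots,|T_{M_n}|)$ are i.i.d.\ $\mathrm{Borel}(1)$ random variables whose joint law depends on $R$ only through $M_n$.

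Imposing the further conditioning $\{|\GWT|=n,\,C_{n,0}=x,\,M_n=y\}$ restricts $R$ to shapes with $x$ vertices and $y$ boundary children, and forces the subtree sizes to satisfy $|T_1|+\dots+|T_y|=n-x$, but leaves the structure strictly below the preserved marks untouched. Therefore, for any admissible shape $r$, the conditional law of $(|T_1|,\dots,|T_y|)$ given $\{R=r,\,|\GWT|=n\}$ is that of $y$ i.i.d.\ $\mathrm{Borel}(1)$ variables conditioned to sum to $n-x$; since this law does not depend on $r$, it persists after marginalizing over $r$ under $\{C_{n,0}=x,\,M_n=y\}$. Applying the deterministic sorting map yields the claim for $(C_{n,1}^*,\dots,C_{n,y}^*)$.

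The main obstacle is a bookkeeping one in the second step: verifying that the event $\{C_{n,0}=x,\,M_n=y\}$ is measurable with respect to the $\sigma$-field generated by the tree and marks on $R$ together with the marks on the immediate boundary of $R$, so that the branching property applies cleanly and no hidden bias is introduced on the subtrees $T_i$. Concretely, one summing the joint probability over admissible shapes $r$ and checks that the shape-dependent factor $\P(R=r)$ cancels between numerator and denominator, leaving the explicit ratio $\prod_{i=1}^y\P(\mathrm{Borel}(1)=n_i)\,/\,\P(\mathrm{Borel\text{-}Tanner}(y)=n-x)$. Once this decomposition of the probability space is in place, the result follows from elementary counting.
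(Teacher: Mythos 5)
Your proof is correct and takes essentially the same approach as the paper: both decompose the unconditioned Poisson$(1)$ Galton--Watson tree into the unmarked root component and the subtrees hanging from the preserved marks, invoke the branching property to conclude that (given $M_n=y$) these subtrees are i.i.d.\ Poisson$(1)$ trees with Borel$(1)$ sizes independent of $C_{n,0}$, and then impose the conditioning $|\GWT|=n$ to force the sum constraint $n-x$. The paper states the branching-property step more tersely as a known fact, whereas you spell out the stopping-line decomposition and flag the measurability bookkeeping, but the substance and route are the same.
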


\begin{proof}
We explicitly write the condition for the size of the tree. Let $\GWT$ be a Galton-Watson tree with Poisson(1) offspring distribution; we endow it with the marking-erasing process associated with the process which marks each vertex independently with probability $p_n$. Denote by $\tilde{M}_n$ the number of marks, $\tilde{C}_{n,0}$ the size of the root-component and, conditionally on $\{\tilde{M}_n = y\}$, $\tilde{C}_{n,1}^* \ge \dots \ge \tilde{C}_{n,y}^*$ the ranked sizes of the other components. Note that on the event $\{\tilde{M}_n = y\}$, we have $\vert\GWT\vert = \tilde{C}_{n,0} + \tilde{C}_{n,1}^* + \dots + \tilde{C}_{n,y}^*$.

Condition on the event $\{\tilde{M}_n = y\}$; it is known that the subtrees of $\GWT$ generated by the $y$ atoms of the point process are independent Galton-Watson trees with Poisson(1) offspring distribution, independent of $\tilde{C}_{n,0}$. Hence, on the event $\{\tilde{M}_n = y\}$, $\tilde{C}_{n,1}^*, \dots, \tilde{C}_{n,y}^*$ are i.i.d. Borel(1) random variables, listed in non-increasing order and independent of $\tilde{C}_{n,0}$. Further, on the event $\{\vert\GWT\vert = n, \tilde{M}_n = y, \tilde{C}_{n,0} = x\}$, $\tilde{C}_{n,1}^*, \dots, \tilde{C}_{n,y}^*$ are conditioned to have sum $n - x$.
\end{proof}

The Borel(1) distribution belongs to the domain of attraction of the stable law of index $1/2$. A consequence tailored for our need is the following: let $(\beta_i)_{i \ge 1}$ be i.i.d. Borel$(1)$ random variables, and for any $k \ge 1$, denote by $\beta^*_1 \ge \dots \ge \beta^*_k$ the order statistics of the first $k$ elements of the latter. Let also $\sigma$ be a subordinator distributed as \eqref{eq-definition_subordinateur_S} and $J_1 \ge J_2 \ge \dots \ge 0$ the ranked sizes of its jumps made during the time interval $[0, 1]$.

\begin{Lem}\label{lem-convergence_somme_borel}
Let $\lambda, \nu >0$ and two sequences of integers $k_n$ and $a_n$ such that $\lim_{n \to \infty} n^{-1/2} k_n = \lambda$ and $\lim_{n \to \infty} n^{-1} a_n = \nu$. Then
\begin{equation*}
\lim_{n \to \infty} \Bigg(\Bigg(\frac{1}{n} \sum_{i=1}^{\lfloor \sqrt{n} t\rfloor \wedge k_n} \beta_i\,,\, t \ge 0\Bigg) \,\Bigg\vert\, \sum_{i=1}^{k_n} \beta_i = a_n \Bigg)
= \big((\sigma(t \wedge \lambda), t \ge 0) \,\big\vert\, \sigma(\lambda) = \nu\big)
\end{equation*}
in distribution for the Skorohod topology. As a consequence, the convergence of the ranked jumps holds for the $\ell^1$ topology:
\begin{equation*}
\lim_{n \to \infty} \Bigg(\Bigg(\frac{\beta^*_1}{n}, \dots, \frac{\beta^*_{k_n}}{n}\Bigg) \,\Bigg\vert\, \sum_{i=1}^{k_n} \beta_i = a_n \Bigg) = \Bigg(\frac{\nu J_1}{\sigma(1)}, \frac{\nu J_2}{\sigma(1)}, \dots \,\Bigg\vert\, \sigma(1) = \frac{\nu}{\lambda^2} \Bigg)
\end{equation*}
in distribution.
\end{Lem}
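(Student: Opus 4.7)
The plan is, abbreviating $S_k := \sum_{i=1}^k \beta_i$, to first establish the unconditional functional limit theorem
\begin{equation*}
\left(\frac{S_{\lfloor \sqrt{n}\, t \rfloor}}{n},\ t \ge 0\right) \ \Longrightarrow\ (\sigma(t),\ t \ge 0)
\end{equation*}
in the Skorohod topology, then upgrade it to the conditioned/bridge version, and finally translate the resulting Skorohod convergence of sample paths into $\ell^1$-convergence of ranked jumps via \autoref{lem-convergence_skorohod_et_sauts}. For the unconditional statement, Stirling yields $\P(\beta_1 = m) \sim (2\pi m^3)^{-1/2}$, so $\P(\beta_1 \ge x) \sim \sqrt{2/(\pi x)}$, which matches the tail $\int_x^\infty (2\pi y^3)^{-1/2}\,\d y$ of the Lévy measure of $\sigma$; the classical functional limit theorem for i.i.d. sums in the domain of attraction of a spectrally positive stable law of index $1/2$ (no centering is needed as $1/2 < 1$) then yields the display.

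For the conditioned process, the key additional ingredient is a local limit theorem. Using the explicit Borel--Tanner density $\P(S_{k_n} = m) = \frac{k_n}{(m - k_n)!}\, \ex^{-m}\, m^{m - k_n - 1}$ and Stirling, a direct computation with $k_n \sim \lambda \sqrt{n}$ and $a_n \sim \nu n$ yields
\begin{equation*}
n \cdot \P(S_{k_n} = a_n) \ \longrightarrow\ \frac{\lambda}{\sqrt{2\pi}\, \nu^{3/2}} \exp\!\left(-\frac{\lambda^2}{2\nu}\right) = p_\lambda(\nu),
\end{equation*}
the density at $\nu$ of $\sigma(\lambda)$. Combining this LLT with the unconditional FLT and the Markovian bridge framework of Chaumont and Uribe Bravo \cite{Chaumont_Uribe_Bravo-Markovian_bridges_weak_continuity_and_pathwise_constructions} already invoked in \autoref{section2}, the conditioned laws converge in the Skorohod topology to the bridge law $\mathbf{P}_{0,\nu}^{\lambda}$ of $\sigma$. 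More concretely, an elementary argument via Bayes and the Markov property of the random walk shows that, restricted to $[0, \lambda - \eps]$, the conditioned law is absolutely continuous w.r.t. the unconditioned one, with Radon--Nikodym derivative $\P\!\left(S_{k_n - \lfloor (\lambda - \eps)\sqrt n\rfloor} = a_n - S_{\lfloor(\lambda - \eps)\sqrt n\rfloor}\right) / \P(S_{k_n} = a_n)$; the LLT makes this ratio converge uniformly on compacts to $p_\eps(\nu - \cdot)/p_\lambda(\nu)$, and sending $\eps \to 0$ together with the pure jump nature of $\sigma$ extends the convergence to the entire interval $[0, \lambda]$.

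Once the bridge convergence in the Skorohod topology is established, Skorohod's representation theorem combined with \autoref{lem-convergence_skorohod_et_sauts}(ii) --- applied to $f_n(t) = n^{-1} S_{\lfloor \sqrt{n} t \rfloor \wedge k_n}$, whose limit $\sigma(\cdot \wedge \lambda)$ is a pure jump function on $[0, 1]$ --- produces the claimed convergence of ranked jumps in $\ell^1$. To recognize the limit in the form stated in the lemma, I would invoke the $1/2$-stable scaling $(\lambda^{-2}\sigma(\lambda t), t \ge 0) \stackrel{d}{=} (\sigma(t), t \ge 0)$: the ranked jumps of $\sigma$ on $[0, \lambda]$ have the law of $(\lambda^2 J_i)_{i \ge 1}$, and the constraint $\sigma(\lambda) = \nu$ rewrites as $\sigma(1) = \nu/\lambda^2$. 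Since on this event $\nu/\sigma(1) = \lambda^2$, the limit equals $(\nu J_i/\sigma(1))_{i \ge 1}$ under $\{\sigma(1) = \nu/\lambda^2\}$, exactly as stated.

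The hardest part will be the conditioning step: extending the unconditional FLT together with the LLT into a bona fide weak convergence of bridge laws on the full interval $[0, \lambda]$, where we are pinning the process at a singular event. Once that is handled --- either by directly quoting the results of Chaumont--Uribe Bravo or by the explicit absolute continuity argument above coupled with the tightness of the discrete bridges near the pinning time --- everything else reduces to routine Stirling-type asymptotics and the self-similarity of the $1/2$-stable subordinator.
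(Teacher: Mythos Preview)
Your argument is correct, but the paper's own proof is much shorter: it simply cites Lemma~11 of Aldous and Pitman \cite{Aldous_Pitman-Brownian_bridge_asymptotics_for_random_mappings} for the first (bridge) convergence, and then invokes \autoref{lem-convergence_skorohod_et_sauts} for the passage to ranked jumps, exactly as you do. What you have written is essentially a sketch of how one would prove that cited lemma from scratch --- unconditional stable FLT, Borel--Tanner local limit theorem, and an absolute-continuity/bridge argument in the spirit of \cite{Chaumont_Uribe_Bravo-Markovian_bridges_weak_continuity_and_pathwise_constructions}. This is a genuinely self-contained route and everything you say checks out, including the $1/2$-stable rescaling that rewrites the limit as $(\nu J_i/\sigma(1))_{i\ge 1}$ given $\sigma(1)=\nu/\lambda^2$. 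One small cosmetic point: \autoref{lem-convergence_skorohod_et_sauts} is stated for functions on $[0,1]$, so when $\lambda\neq 1$ you should either rescale time or note that the lemma works verbatim on any compact interval; this is harmless. The trade-off is clear: the paper's proof is two lines because the heavy lifting has already been done in \cite{Aldous_Pitman-Brownian_bridge_asymptotics_for_random_mappings}, whereas your approach is longer but does not depend on locating that specific reference.
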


\begin{proof}
The first convergence is the result stated in Lemma 11 of Aldous and Pitman \cite{Aldous_Pitman-Brownian_bridge_asymptotics_for_random_mappings}. The second then follows from the continuity obtained in \autoref{lem-convergence_skorohod_et_sauts}.
\end{proof}

We apply this convergence to the random sequences $M_n$ and $n - C_{n,0}$ instead of $k_n$ and $a_n$. They fulfill the assumptions of \autoref{lem-convergence_somme_borel} as it is shown in the following lemma that we prove in the next subsection.

\begin{Lem}\label{lem-convergence_taille_composante_contenant_la_racine_et_nombre_de_marques_du_GWT}
Let $D(c)$ be a random variable distributed as \eqref{eq-distribution_D_infini}. Then
\begin{equation*}
\lim_{n \to \infty} \bigg(\frac{C_{n,0}}{n}, \frac{M_n}{\sqrt{n}}\bigg) = (D(c), c D(c))
\quad\text{in distribution.}
\end{equation*}
\end{Lem}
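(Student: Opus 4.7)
My plan is to derive the joint law of $(C_{n,0},M_n)$ explicitly from the branching structure of $\GWTn$ and then pass to the limit via Stirling's formula combined with a local limit argument.

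\textbf{Explicit joint law.} In an unconditioned Poisson$(1)$ Galton-Watson tree $\GWT$ whose vertices are independently marked with probability $p_n$, thinning implies that the root component $R$ is itself a Galton-Watson tree with Poisson$(1-p_n)$ offspring distribution; conditionally on $|R|=k$, the total number of marked children of vertices of $R$ is $\mathrm{Poi}(kp_n)$ by independence across vertices; and conditionally on this number being $m$, the external subtrees are iid unconditioned Poisson$(1)$ Galton-Watson trees, so their sizes are iid $\mathrm{Borel}(1)$ and their sum has the $\mathrm{Borel\text{-}Tanner}(m)$ distribution. Since $|\GWT|=|R|+T$ with $T$ the sum of external sizes, conditioning on $\{|\GWT|=n\}$ and using that $\P(|\GWT|=n)=\mathrm{Borel}(1)(n)$ yields
\begin{equation*}
\P\bigl(C_{n,0}=k,\,M_n=m\bigr)=\frac{\mathrm{Borel}(1-p_n)(k)\cdot\mathrm{Poi}(kp_n)(m)\cdot\mathrm{Borel\text{-}Tanner}(m)(n-k)}{\mathrm{Borel}(1)(n)}.
\end{equation*}

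\textbf{Asymptotic analysis.} Fix $x\in(0,1)$, $y>0$ and take $k=\lfloor xn\rfloor$, $m=\lfloor y\sqrt n\rfloor$. Stirling together with the second-order expansion $\ln(1-p_n)=-p_n-p_n^2/2+O(p_n^3)$ (which gives $(1-p_n)^{k-1}e^{kp_n}\to e^{-c^2 x/2}$) yields $\mathrm{Borel}(1-p_n)(k)\sim e^{-c^2 x/2}/(\sqrt{2\pi}(xn)^{3/2})$ and $\mathrm{Borel}(1)(n)\sim 1/(\sqrt{2\pi}\,n^{3/2})$. The classical Poisson local CLT gives $\mathrm{Poi}(kp_n)(m)\sim(2\pi cx)^{-1/2}n^{-1/4}\exp(-\sqrt n(y-cx)^2/(2cx))$, and Stirling applied to the Borel-Tanner factor (equivalently, the stable local limit as used in \autoref{lem-convergence_somme_borel}) gives $\mathrm{Borel\text{-}Tanner}(m)(n-k)\sim y/(n\sqrt{2\pi(1-x)^3})\cdot\exp(-y^2/(2(1-x)))$. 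Combining these four factors and multiplying by the Jacobian $n^{3/2}$ (to pass from probability masses to a density in $(x,y)$), the $e^{\pm n}$ contributions cancel exactly and
\begin{equation*}
n^{3/2}\,\P\bigl(C_{n,0}=\lfloor xn\rfloor,\,M_n=\lfloor y\sqrt n\rfloor\bigr)\sim\frac{y\,e^{-c^2 x/2-y^2/(2(1-x))}}{2\pi\,x^{3/2}(1-x)^{3/2}\sqrt{cx}}\cdot n^{1/4}\exp\!\Bigl(-\tfrac{\sqrt n(y-cx)^2}{2cx}\Bigr).
\end{equation*}
The $n$-dependent factor is an approximate Gaussian density in $y$ of vanishing width $\sqrt{cx}\,n^{-1/4}$ centred at $cx$, so $M_n/\sqrt n$ concentrates at $c\cdot C_{n,0}/n$. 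Integrating it in $y$ (using $\int n^{1/4}\exp(-\sqrt n(y-cx)^2/(2cx))\,\d y\to\sqrt{2\pi cx}$) and evaluating the first factor at $y=cx$ collapses $-c^2 x/2-(cx)^2/(2(1-x))$ to $-c^2 x/(2(1-x))$, leaving the marginal $x$-density $c(2\pi x(1-x)^3)^{-1/2}e^{-c^2 x/(2(1-x))}$, which is exactly the density of $D(c)$. This yields the joint convergence $(C_{n,0}/n,M_n/\sqrt n)\to(D(c),cD(c))$ in distribution.

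\textbf{Main obstacle.} The delicate part is the simultaneous alignment of exponential contributions at three scales: the $e^{\pm n}$ terms from all four factors must cancel exactly (a manifestation of Cayley's identity); the $e^{\pm\sqrt n}$ terms from $(1-p_n)^{k-1}$, $(kp_n)^m$ and $(n-k)^{n-k-m-1}/(n-k-m)!$ must combine into the clean Gaussian profile $-\sqrt n(y-cx)^2/(2cx)$; and the remaining $O(1)$ exponents must reassemble into $-c^2 x/(2(1-x))$. Careful bookkeeping of these three scales is what makes this essentially a joint local limit theorem for $(C_{n,0},M_n)$.
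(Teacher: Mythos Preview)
Your derivation of the explicit joint law coincides with the paper's \autoref{lem-distribution_taille_composante_contenant_la_racine_et_nombre_de_marques_du_GWT}, but from there the two routes diverge. The paper splits the problem in two: the marginal $C_{n,0}/n\to D(c)$ is imported from the CRT approximation (\autoref{lem-convergence_taille_composante_contenant_la_racine_du_GWT}), and the joint statement is then reduced to showing that $M_n/\sqrt{n}-c\,C_{n,0}/n\to 0$ via Laplace transforms, exploiting that conditionally on $C_{n,0}=x$ the law of $M_n-1$ is \emph{exactly} binomial with parameters $(n-x-1,\,xp_n/(n-x+xp_n))$, so the conditional generating function is a one-line formula. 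You instead attack the full joint mass function by Stirling, which has the merit of recovering the $D(c)$ marginal purely from the combinatorics without the CRT detour, at the cost of a more delicate multi-scale expansion.

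One technical caveat: your stated Poisson local CLT $\mathrm{Poi}(kp_n)(m)\sim(2\pi cx)^{-1/2}n^{-1/4}\exp(-\sqrt{n}(y-cx)^2/(2cx))$ is only valid in the central window $y-cx=O(n^{-1/4})$; for fixed $y\ne cx$ the true decay is governed by the Poisson large-deviation rate $\lambda(r\ln r-r+1)$ with $r=m/\lambda$, not its quadratic approximation. This does not invalidate your conclusion---the mass concentrates at $y=cx$ even more sharply than you claim---but to make the ``integrating in $y$'' step rigorous you need uniform tail control outside the central window, not just the pointwise Gaussian profile, and similarly some uniformity to upgrade the pointwise Stirling estimates to convergence of the $x$-marginal. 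The paper's conditional-binomial route sidesteps all of this.
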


In order to go from deterministic sequences to random sequences, we also use the following elementary result (see Carath\'{e}odory \cite{Caratheodory-Theory_of_functions_of_a_complex_variable}, Part Four, Chapter I). Let $\mathbb{X}$ and $\mathbb{Y}$ be metric spaces and $f, f_1, f_2, \dots$ be functions defined from $\mathbb{X}$ to $\mathbb{Y}$. We say that $f_n$ converges continuously to $f$ if for any $x, x_1, x_2,\dots \in \mathbb{X}$ such that $\lim_{n \to \infty} x_n = x$ in $\mathbb{X}$, we have $\lim_{n \to \infty} f_n(x_n) = f(x)$ in $\mathbb{Y}$. Then $f_n$ converges continuously to $f$ if and only if $f$ is continuous and $f_n$ converges to $f$ uniformly on compact sets.

\begin{proof}[Proof of \autoref{pro-convergence_tailles_des_composantes_GWT_vers_masses_du_CRT}]
Let $f \colon (0,1) \to \R$ and $F \colon \ell^1(\R) \to \R$ be two continuous and bounded maps. With the notations of \autoref{lem-convergence_somme_borel}, define for any $(u, v) \in (0,1)\times(0,\infty)$
\begin{equation*}
\Upsilon_n(u, v) \coloneqq f(u) \E\bigg[F\bigg(\frac{\beta^*_1}{n}, \dots, \frac{\beta^*_{\lfloor \sqrt{n} v \rfloor}}{n}\bigg) \,\bigg\vert\, \sum_{i=1}^{\lfloor \sqrt{n} v \rfloor} \beta_i = n - \lfloor n u\rfloor \bigg],
\end{equation*}
and
\begin{equation*}
\Upsilon(u, v) \coloneqq f(u) \E\bigg[F\bigg(\frac{(1 - u) J_1}{\sigma(1)}, \frac{(1 - u) J_2}{\sigma(1)}, \dots\bigg) \,\bigg\vert\, \sigma(1) = \frac{1 - u}{v^2}\bigg].
\end{equation*}
Then \autoref{lem-convergence_somme_borel} states that $\Upsilon_n(u_n, v_n) \to \Upsilon(u, v)$ whenever $(u_n, v_n) \to (u, v)$. On the one hand, $\E[\Upsilon(D(c), c D(c))]$ is the limit claimed in \autoref{pro-convergence_tailles_des_composantes_GWT_vers_masses_du_CRT} and, from \autoref{lem-petits_GW}, the $C_{n,i}^*$'s are, conditionally given $C_{n,0}$ and $M_n$, distributed as ranked i.i.d. Borel(1) random variables conditioned to have sum $n-C_{n,0}$. Then we also have
\begin{equation*}
\E\bigg[\Upsilon_n\bigg(\frac{C_{n,0}}{n}, \frac{M_n}{\sqrt{n}}\bigg)\bigg] = \E\bigg[f\bigg(\frac{C_{n,0}}{n}\bigg) F\bigg(\frac{C_{n,1}^*}{n}, \dots, \frac{C_{n,M_n}^*}{n}\bigg)\bigg].
\end{equation*}
On the other hand, from the discussion above, $\Upsilon$ is continuous (which is also a consequence of \autoref{pro-continuite_sauts_pont_stable}) and $\Upsilon_n \to \Upsilon$ uniformly on compact sets. We then write
\begin{multline*}
\bigg\vert	\E\bigg[\Upsilon_n\bigg(\frac{C_{n,0}}{n}, \frac{M_n}{\sqrt{n}}\bigg)\bigg]
		- \E\big[\Upsilon(D(c), c D(c))\big]
\bigg\vert
\\
\le		\E\bigg[\bigg\vert\Upsilon_n\bigg(\frac{C_{n,0}}{n}, \frac{M_n}{\sqrt{n}}\bigg) - \Upsilon\bigg(\frac{C_{n,0}}{n}, \frac{M_n}{\sqrt{n}}\bigg)\bigg\vert\bigg]
+		\bigg\vert \E\bigg[\Upsilon\bigg(\frac{C_{n,0}}{n}, \frac{M_n}{\sqrt{n}}\bigg)\bigg] - \E\big[\Upsilon(D(c), c D(c))\big]\bigg\vert.
\end{multline*}
From \autoref{lem-convergence_taille_composante_contenant_la_racine_et_nombre_de_marques_du_GWT}, since $\Upsilon$ is continuous and bounded, the second term tends to $0$. Moreover $\Upsilon, \Upsilon_1, \Upsilon_2, \dots$ are uniformly bounded, say by $C > 0$, therefore the first term is bounded from above by
\begin{equation*}
\sup_{x \in K} \big\vert\Upsilon_n(x) - \Upsilon(x)\big\vert + 2C \P\bigg(\bigg(\frac{C_{n,0}}{n}, \frac{M_n}{\sqrt{n}}\bigg) \notin K\bigg),
\end{equation*}
for any compact $K$. The first term of the latter tends to $0$ for any $K$ and the second can be made arbitrary small as the sequence is tight.
\end{proof}

\subsection{Asymptotic behavior of the number of burnt blocks}\label{section34}

We finally prove \autoref{lem-convergence_taille_composante_contenant_la_racine_et_nombre_de_marques_du_GWT} which completes the proof of \autoref{pro-convergence_tailles_des_composantes_GWT_vers_masses_du_CRT} and thereby that of \autoref{thm-crit}. We use the two following observations: the convergence of the first marginal $n^{-1} C_{n,0}$ holds and the conditional distribution of $M_n$ given $C_{n,0}$ is known explicitly.

\begin{Lem}\label{lem-convergence_taille_composante_contenant_la_racine_du_GWT}
We have $\lim_{n \to \infty} n^{-1} C_{n,0} = D(c)$ in distribution.
\end{Lem}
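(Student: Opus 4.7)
The plan is to derive this lemma directly from three results already established in the excerpt, with no additional computation. The key observation is that both the cut-tree $\Cutn$ with its mark process $\varphi_n$ and the Galton-Watson tree $\GWTn$ with its vertex-marking $\psi_n$ rescale, in the limit $n \to \infty$, to the same decomposition of the Brownian CRT by a Poisson point process $\Phi$ on its skeleton. Hence their root-component sizes share a common distributional limit, and that limit is pinned down by Bertoin's critical theorem.

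Concretely, let $M_0$ denote the mass of the component of $\CRT$ containing the root in the forest obtained by logging at $\Phi$, that is, the first coordinate of $\#(\CRT, \Phi)$. Projecting the $\ell^1$-convergence \eqref{eq-convergence_GWT_marque_vers_CRT_marque} onto the first coordinate gives $n^{-1} C_{n,0} \to M_0$ in distribution. In the same way, projecting the conclusion of \autoref{pro-convergence_taille_des_arbres_vers_masses_du_CRT} onto the first coordinate yields $n^{-1} I_n \to M_0$ in distribution. On the other hand, the critical part \ref{thm-transition_de_phase_crit} of Bertoin's theorem recalled in the introduction states that $n^{-1} I_n \to D(c)$ in distribution, with $D(c)$ having the law \eqref{eq-distribution_D_infini}. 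By uniqueness of the limit in distribution, $M_0$ must then be $D(c)$-distributed, and the claim follows.

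The argument is not circular: \autoref{pro-convergence_taille_des_arbres_vers_masses_du_CRT} rests on \autoref{lem-convergence_marked_trees_marked_CRT} through the cut-tree of the Cayley tree, and \eqref{eq-convergence_GWT_marque_vers_CRT_marque} is obtained by adapting the same lemma to $\GWTn$ with the uniform probability on vertices; neither invokes \autoref{pro-convergence_tailles_des_composantes_GWT_vers_masses_du_CRT} nor \autoref{thm-crit}. I do not anticipate any real obstacle: the whole content of the lemma is the identification of $M_0$ as $D(c)$-distributed, which is accomplished indirectly by transferring Bertoin's discrete result through the common scaling limit rather than by an explicit computation on the CRT.
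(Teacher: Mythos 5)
Your argument is correct, and the first step coincides with the paper's: both project the $\ell^1$-convergence \eqref{eq-convergence_GWT_marque_vers_CRT_marque} onto the first coordinate to get $n^{-1}C_{n,0} \to M_0$, where $M_0$ (the paper writes $\mu(\xi_0)$) is the root-component mass of $\#(\CRT,\Phi)$. The identification step is where you and the paper diverge. The paper pins down the law of $M_0$ directly, invoking Corollary~5 of Aldous and Pitman \cite{Aldous_Pitman-Standard_additive_coalescent}, where $\mu(\xi_0)$ appears as $Y_1^*(c)$ and is shown to have density \eqref{eq-distribution_D_infini}; the parenthetical remark that ``keeping only the closest atoms to the root does not matter'' is there to reconcile the marking-erasing process $\Phi$ with the plain Poisson process used by Aldous and Pitman. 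You instead transfer Bertoin's critical result \ref{thm-transition_de_phase_crit} through the common scaling limit: project \autoref{pro-convergence_taille_des_arbres_vers_masses_du_CRT} to get $n^{-1}I_n \to M_0$, combine with $n^{-1}I_n \to D(c)$ from Bertoin's Theorem~1, and conclude by uniqueness of the distributional limit. This is a legitimate alternative and, as you note, not circular: \eqref{eq-convergence_GWT_marque_vers_CRT_marque} and \autoref{pro-convergence_taille_des_arbres_vers_masses_du_CRT} both rest on \autoref{lem-convergence_marked_trees_marked_CRT} (and its adaptation to $\GWTn$), not on the lemma being proved, while Bertoin's theorem is an external result used as a black box. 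Your route has the advantage of using only facts already stated in the paper, at the cost of being slightly more indirect; the paper's route is a one-line identification but requires the reader to unpack the Aldous--Pitman reference.
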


\begin{proof}
Denote by $\mu(\xi_0)$ the mass of the root-component of the CRT after logging at the atoms of a Poisson point process with rate $c$ per unit length (here keeping only the closest atoms to the root does not matter). Then \eqref{eq-convergence_GWT_marque_vers_CRT_marque} yields
\begin{equation*}
\lim_{n \to \infty} n^{-1} C_{n,0} = \mu(\xi_0)
\quad\text{in distribution.}
\end{equation*}
The claim then follows from the identity $\mu(\xi_0) = D(c)$ in distribution stated in Corollary 5 of Aldous and Pitman \cite{Aldous_Pitman-Standard_additive_coalescent} since $\mu(\xi_0)$ here is $Y_1^*(c)$ there.
\end{proof}

\begin{Lem}\label{lem-distribution_taille_composante_contenant_la_racine_et_nombre_de_marques_du_GWT} 
For any $n \ge 2$, the pair $(C_{n,0}, M_n)$ is distributed as follows: for any integers $x, y$ such that $x+y \le n$,
\begin{equation*}
\P(C_{n,0} = x, M_n = y) = \frac{n! (x(1-p_n))^{x-1} (x p_n)^{y} (n-x)^{n-x-y-1}}{n^{n-1} x! (y-1)! (n - x - y)!}.
\end{equation*}
Then, on the event $\{C_{n,0} =x\}$, $M_n$ is distributed as $X_n + 1$ where $X_n$ is a binomial random variable with parameters $n-x-1$ and $(x p_n)/(n-x+x p_n)$.
\end{Lem}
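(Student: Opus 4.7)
My plan is a direct enumerative computation. Recall from the preceding discussion that $\GWTn$ has the law of a uniform rooted labelled tree on $[n]$, so each of the $n^{n-1}$ such rooted trees carries probability $n^{-(n-1)}$; we also note that a mark on the root, if any, is vacuous, since the root has no parent-edge to cut, so it contributes neither to $M_n$ nor to the determination of $C_{n,0}$. The key observation is that, for a fixed rooted tree $T$ on $[n]$ and a fixed ancestor-closed subtree $R \ni \mathrm{root}$ of $T$ with $|R| = x$ and boundary $K = \{v \notin R : \mathrm{parent}(v) \in R\}$ of size $y$, the event that the root-component equals $R$ after marking-erasing is exactly that every vertex of $R \setminus \{\mathrm{root}\}$ is unmarked and every vertex of $K$ is marked; the marks strictly below $K$ are unconstrained because they are erased. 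Its conditional probability given $T$ is therefore $(1-p_n)^{x-1} p_n^y$, and different choices of $R$ for fixed $T$ yield disjoint events, so $\P(C_{n,0} = x, M_n = y)$ is obtained by summing this quantity over $(T, R)$.

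I would then count the pairs $(T, R)$ by the standard decomposition: $n$ choices for the root; $\binom{n-1}{x-1}$ for the non-root vertices of $R$; $\binom{n-x}{y}$ for the set $K$; $x^{x-2}$ rooted tree structures on $R$ with the prescribed root (Cayley's formula); $x^y$ ways to assign a parent in $R$ to each $v \in K$; and $y (n-x)^{n-x-y-1}$ rooted forests on $[n] \setminus R$ with $K$ as specified set of roots (the classical extension of Cayley's formula for forests with prescribed roots, e.g.\ via a Pr\"ufer-type encoding). Multiplying out, inserting the factor $n^{-(n-1)}(1-p_n)^{x-1}p_n^y$, and using $x^{x-2}/(x-1)! = x^{x-1}/x!$ give the announced joint pmf.

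For the conditional distribution of $M_n$, I would compute the marginal $\P(C_{n,0} = x)$ by summing the joint pmf over $y \in \{1,\ldots,n-x\}$: the substitution $m = y - 1$ converts the sum into $\sum_{m=0}^{n-x-1} \binom{n-x-1}{m} (xp_n)^m (n-x)^{n-x-1-m}$, which equals $(n-x+xp_n)^{n-x-1}$ by the binomial theorem. Dividing the joint by this marginal leaves $\binom{n-x-1}{y-1} q^{y-1}(1-q)^{n-x-y}$ with $q = xp_n/(n-x+xp_n)$, which is the pmf of $\mathrm{Binomial}(n-x-1,q) + 1$. The main obstacle is book-keeping rather than any conceptual subtlety: one has to verify that distinct initial subtrees $R$ of a fixed $T$ correspond to disjoint marking events, and be attentive to the convention about root marks --- visible in the exponent $x - 1$ on $(1-p_n)$ --- which is what lets the answer assemble into the clean form $(x(1-p_n))^{x-1}$.
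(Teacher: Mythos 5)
Your proof is correct, and it takes a genuinely different route from the paper's. The paper works with the unconditioned Galton--Watson tree $\GWT$ with Poisson($1$) offspring and writes the joint pmf as
\begin{equation*}
\P(C_{n,0} = x, M_n = y \mid \vert\GWT\vert = n) = \frac{\P(C_{n,0} = x)\,\P(M_n = y \mid C_{n,0} = x)\,\P(\vert\GWT\vert = n \mid C_{n,0} = x, M_n = y)}{\P(\vert\GWT\vert = n)},
\end{equation*}
then identifies each factor probabilistically: the root component is a Poisson$(1-p_n)$ Galton--Watson tree (so $C_{n,0}$ is Borel$(1-p_n)$), given $C_{n,0}=x$ the mark count $M_n$ is Poisson$(xp_n)$ by independent thinning, and given both the remainder $\vert\GWT\vert - x$ is Borel-Tanner$(y)$ by \autoref{lem-petits_GW}; multiplying these out and dividing by the Borel$(1)$ pmf gives the formula, with the three exponentials recombining to cancel. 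Your argument instead bypasses the Galton--Watson probabilistic structure entirely and enumerates pairs $(T,R)$ of uniform rooted trees and ancestor-closed root subtrees, using Cayley's formula for rooted trees, the $k m^{m-k-1}$ count for forests with prescribed roots, and the factorization $(1-p_n)^{x-1}p_n^y$ for the marking event. Both computations land on the identical expression (your count $n\binom{n-1}{x-1}x^{x-2}\binom{n-x}{y}x^y\cdot y(n-x)^{n-x-y-1}n^{-(n-1)}(1-p_n)^{x-1}p_n^y$ simplifies correctly), and your treatment of the root --- that a root mark is vacuous, giving exponent $x-1$ rather than $x$ --- matches the paper's implicit convention. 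For the conditional law of $M_n$ you sum over $y$ via the binomial theorem and divide, which is essentially the paper's computation presented slightly differently. The paper's route makes the Borel/Borel-Tanner structure (used throughout \autoref{section3}) visible and reuses \autoref{lem-petits_GW}; yours is more elementary and self-contained but requires the forest-counting formula as an external input. Either is a complete proof.
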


\begin{proof}
For the first claim, as in the proof of \autoref{lem-petits_GW}, we explicitly write the condition on the size of the tree and work with a Galton-Watson tree with Poisson(1) offspring distribution $\GWT$:
\begin{equation*}
\P(C_{n,0} = x, M_n = y \,\vert\, \vert\GWT\vert = n )
= \frac{\P(C_{n,0} = x) \, \P(M_n = y \,\vert\, C_{n,0} = x) \, \P(\vert\GWT\vert = n \,\vert\, C_{n,0} = x, M_n = y)}{\P(\vert\GWT\vert = n)}.
\end{equation*}
We know that $\vert\GWT\vert$ is Borel(1) distributed. Moreover, the root-component of $\GWT$ is a Galton-Watson tree with Poisson$(1-p_n)$ offspring distribution, so that $C_{n,0}$ is Borel$(1-p_n)$ distributed, and on $\{C_{n,0} = x\}$, $M_n$ is the sum of $x$ i.i.d. Poisson$(p_n)$ random variables, so is Poisson$(x p_n)$ distributed. Finally, from \autoref{lem-petits_GW}, on $\{C_{n,0} = x, M_n = y\}$, $\vert\GWT\vert - x$ is the sum of $y$ i.i.d. Borel(1) random variables, i.e. is Borel-Tanner($y$) distributed. Putting the pieces together gives the first claim. For the second claim (with the implicit condition $\vert\GWT\vert = n$), we then directly compute
\begin{align*}
\P(M_n = y \,\vert\, C_{n,0} = x)
&=	\P(C_{n,0} = x, M_n = y) \bigg(\sum_{z = 1}^{n-x} \P(C_{n,0} = x, M_n = z) \bigg)^{-1}
\\
&=	\frac{(n - x - 1)!}{(y-1)! (n - x - y)!} \bigg(\frac{x p_n}{n - x + x p_n}\bigg)^{ y-1} \bigg(\frac{n-x}{n - x + x p_n}\bigg)^{ n-x-y}
\\
&=	\P(X_n = y-1),
\end{align*}
where $X_n$ is the desired binomial random variable.
\end{proof}

We can now prove \autoref{lem-convergence_taille_composante_contenant_la_racine_et_nombre_de_marques_du_GWT}.

\begin{proof}[Proof of \autoref{lem-convergence_taille_composante_contenant_la_racine_et_nombre_de_marques_du_GWT}]
We aim to show that for any $s, t \ge 0$,
\begin{equation*}
\lim_{n \to \infty} \E\bigg[\exp\bigg(-s \frac{C_{n,0}}{n} - t \frac{M_n}{\sqrt{n}} \bigg)\bigg] = \E[\exp(-(s+c t) D(c) )].
\end{equation*}
From \autoref{lem-convergence_taille_composante_contenant_la_racine_du_GWT}, $n^{-1} C_{n,0} \to D(c)$ in distribution, it is thus sufficient to show
\begin{equation*}
\limsup_{n \to \infty}
\bigg\vert
	\E\bigg[\exp\bigg(-s \frac{C_{n,0}}{n} - t \frac{M_n}{\sqrt{n}} \bigg)\bigg]
	-
	\E\bigg[\exp\bigg(-(s+ct) \frac{C_{n,0}}{n}\bigg)\bigg]
\bigg\vert
= 0.
\end{equation*}
Let $\eps > 0$ and fix $\delta > 0$ such that for any $n$ large enough, $\P(C_{n,0} > \lfloor(1-\delta) n\rfloor) \le \eps$. We then reduce to show the above convergence on the event $\{C_{n,0} \le \lfloor(1-\delta) n\rfloor\}$. Using \autoref{lem-distribution_taille_composante_contenant_la_racine_et_nombre_de_marques_du_GWT}, we compute for any $1 \le x \le n-1$ and any $t \ge 0$,
\begin{equation*}
\E\big[\ex^{-t M_n} \,\big\vert\, C_{n,0} = x\big] = \E\big[\ex^{-t (X_n + 1)}\big] = \ex^{-t} \bigg(1 - \frac{x p_n (1 - \ex^{- t})}{n-x + x p_n}\bigg)^{n - x -1}.
\end{equation*}
Conditioning first on the value of $C_{n,0}$ and then averaging, we get
\begin{multline*}
\E\bigg[\exp\bigg(-s \frac{C_{n,0}}{n} - t \frac{M_n}{\sqrt{n}}\bigg) \ind{C_{n,0} \le \lfloor(1-\delta) n\rfloor}\bigg]
\\
= \sum_{x = 1}^{\lfloor(1-\delta) n\rfloor} \P\big(C_{n,0} = x\big) \exp\bigg(-\frac{s x}{n}\bigg) \exp\bigg(- \frac{t}{\sqrt{n}}\bigg) \bigg(1 - \frac{x p_n (1 - \ex^{-t/\sqrt{n}})}{n-x + x p_n}\bigg)^{n - x -1}.
\end{multline*}
Remark that, uniformly for $x \le \lfloor(1-\delta) n\rfloor$,
\begin{equation*}
\frac{x p_n (1 - \ex^{-t/\sqrt{n}})}{n-x + x p_n} = \frac{1}{n-x} \bigg(\frac{x c t}{n} + o(1)\bigg) \quad\text{as }n \to \infty.
\end{equation*}
As a consequence, as $n \to \infty$,
\begin{equation*}
\exp\bigg(- \frac{t}{\sqrt{n}}\bigg) \bigg(1 - \frac{x p_n (1 - \ex^{-t/\sqrt{n}})}{n-x + x p_n}\bigg)^{n - x -1}
= \exp\bigg(- \frac{x c t}{n}\bigg) (1 + o(1)),
\end{equation*}
uniformly for $x \le \lfloor(1-\delta) n\rfloor$. Finally, the difference
\begin{equation*}
\E\bigg[\exp\bigg(-s \frac{C_{n,0}}{n} - t \frac{M_n}{\sqrt{n}}\bigg) \ind{C_{n,0} \le \lfloor(1-\delta) n\rfloor}\bigg]
 - 
\E\bigg[\exp\bigg(-(s+ct) \frac{C_{n,0}}{n}\bigg) \ind{C_{n,0} \le \lfloor(1-\delta) n\rfloor}\bigg]
\end{equation*}
tends to 0 as $n \to \infty$, which completes the proof.
\end{proof}


\section{Asymptotic proportion of fireproof vertices in the subcritical regime}\label{section4}

We now consider the subcritical regime $p_n \gg n^{-1/2}$ of the dynamics on $\tn$. We prove the convergence of the total number of fireproof vertices $I_n$, rescaled by a factor $p_n^2$ (recall \autoref{thm-sous_crit}), and also the following result on the size of the largest fireproof component.

\begin{Pro}\label{pro-composante_geante_cas_sous_critique}
For any $\eps > 0$, with a probability converging to $1$ as $n \to \infty$, there exists at least one fireproof subtree larger than $n^{- \eps} p_n^{-2}$ but none larger than $\eps p_n^{-2}$. 
\end{Pro}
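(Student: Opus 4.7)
The proof leans on the analysis of the first fire carried out in \autoref{section4} for the proof of \autoref{thm-sous_crit}: with high probability, the remaining forest after the first ignition has total mass of order $p_n^{-2}$ and largest subtree of the same order. Conditionally on their sizes, the surviving subtrees $U_1,\dots,U_m$ are independent uniform Cayley trees of the respective sizes, on each of which the dynamics continues with rate $p_n$. Since $|U_i|=O(p_n^{-2})$ whp, the dynamics on any $U_i$ is at worst critical, with effective parameter $c_i=p_n|U_i|^{1/2}=O(1)$.

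For the upper bound, the key observation is that a fireproof subtree of $\tn$ is nothing but a maximal connected subset $T'\subseteq V(\tn)$ every one of whose incident $\tn$-edges is fireproof (this characterization follows from the definition that a vertex is fireproof if and only if all its adjacent edges are). Thus $|T'|=k$ forces at least $k-1$ specified edges to all receive the \emph{fireproof} label, an event of probability at most $(1-p_n)^{k-1}$ since those labels are i.i.d.\ Bernoulli$(1-p_n)$ assigned independently of the dynamics. Iterating the first-fire analysis inside each surviving subtree $U_i$---each fire in a critical subtree destroys a positive fraction of it, by \autoref{thm-crit} applied to $U_i$---we further fragment $U_i$ into sub-subtrees whose maximum size decreases by a multiplicative factor strictly less than $1$; finitely many iterations then suffice to bring it below $\eps p_n^{-2}$ with high probability. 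Gluing across distinct $U_i$'s via fireproof cut-edges does not inflate this estimate, since such a glued component would itself require both endpoints of each gluing edge to be fireproof, which imposes the same exponential constraint.

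For the lower bound, we work inside the largest surviving subtree $U^\star$, of size $|U^\star|\asymp p_n^{-2}$. Applied to $U^\star$, \autoref{thm-crit} yields $\asymp |U^\star|$ fireproof vertices, spread across some number $K$ of fireproof components. By the Euler relation for the forest structure ($K = 1 + F - F' - \#\text{burnt components}$, where $F$ and $F'$ count fireproof edges of $\tn$ and those with both endpoints fireproof respectively), $K$ is controlled by the number of burnt components in $U^\star$ (of order $|U^\star|^{1/2}=O(p_n^{-1})$ by the critical-regime analysis of \autoref{section3}) and the number of boundary edges between burnt and fireproof regions, both easily seen to be $O(n^{\eps/2})$ with high probability via moment estimates. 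Consequently at least one fireproof component inside $U^\star$ has size $\gtrsim |U^\star|/n^{\eps/2}\gtrsim n^{-\eps}p_n^{-2}$.

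\textbf{Main obstacle.} The delicate ingredient is the iterative step for the upper bound: to prove that each successive fire within a critical subtree yields a contraction factor bounded away from $1$ uniformly in probability, one needs quantitative tail estimates on the largest jump of the stable-$1/2$ subordinator conditioned on its terminal value, going beyond the distributional statement of \autoref{thm-crit}. The lower bound relies in turn on a sharper control of boundary edges in the critical regime, which does not follow immediately from the convergence of burnt-component sizes.
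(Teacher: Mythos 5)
Your proposal shares the paper's correct first step --- letting the dynamics run until the first fire, observing that the remaining forest has total size of order $p_n^{-2}$ with subtrees now in the critical regime --- but then diverges and, as you yourself flag, gets stuck. The paper's proof is short precisely because, once the reduction to critical subtrees is made (via \autoref{pro-convergence_arbres_non_brules_sauts_subordinateur_cas_sous_critique} and \autoref{lem-generalistion_thm_convergence_densite_sites_ignifuges_cas_sous_critique}), it cites the already-established facts for the critical regime from Bertoin's paper: Corollary~1 there gives that the largest fireproof component in a critical tree of size $m$ is $o(m)$, and Proposition~1 there gives that it is at least $m^{1-\eps}$, both with probability tending to $1$. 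Since there are only finitely many subtrees larger than $\chi p_n^{-2}$ after the first fire (their count converges to a Poisson random variable), a union bound over these finitely many critical trees concludes. You never invoke those two results of Bertoin, and they are the crucial ingredient; everything else in your plan is an attempt to reprove them from scratch.

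Both of your substitute arguments have genuine gaps. For the upper bound, the crude estimate $(1-p_n)^{k-1}$ for a \emph{fixed} candidate fireproof set of size $k$ cannot be turned into a statement about the largest fireproof component without a union bound over candidate sets, and the number of connected subsets of size $k$ is far too large for that bound to close. The iterative ``multiplicative contraction'' you then propose does not follow from \autoref{thm-crit}: that theorem describes the final sizes of the burnt subtrees, not the evolution of the largest non-burnt subtree after one fire, and in any case the ratio by which the largest piece shrinks after one fire is a random variable whose distribution has mass arbitrarily close to $1$; controlling the number of iterations uniformly requires exactly the tail estimates you admit you do not have. For the lower bound, the Euler-type identity and the claimed moment bound $O(n^{\eps/2})$ on the number of burnt components and boundary edges within $U^\star$ are asserted without justification, and again you acknowledge the gap. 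So the proposal identifies the right decomposition but does not contain a proof; the missing content is precisely the critical-regime statements of Bertoin that the paper imports wholesale.
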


Let us sketch our approach to establish \autoref{thm-sous_crit}. We let the dynamics evolve until an edge is set on fire for the first time, denoting this random time by $\zeta_n \in \N \cup \{\infty\}$. The event $\{\zeta_n = \infty\}$ corresponds to the case where the whole tree is fireproof at the end. Conditionally on $\{\zeta_n = k\}$ with $k \in \N$, if we delete the $k-1$ first fireproof edges, we get a decomposition of $\tn$ into a forest of $k$ trees. Then we set on fire an edge of this forest uniformly at random and burn the whole subtree that contains the latter. The burnt subtree is therefore picked at random with a probability proportional to its number of edges. We then study the dynamics which continue independently on each of the $k-1$ other subtrees.

Let $\sigma$ be a subordinator distributed as \eqref{eq-definition_subordinateur_S} and $J_1 \ge J_2 \ge \dots \ge 0$ the sizes of its jumps made during the time interval $[0, 1]$. Let also $\e$ be an exponential random variable with parameter $1$ independent of $\sigma$. We shall see that $p_n \zeta_n$ converges to $\e$ in distribution and that the sequence of the sizes of the non-burnt subtrees at time $\zeta_n$, ranked in non-increasing order and rescaled by a factor $p_n^2$, converges in distribution to $(\e^2 J_k)_{k \ge 1}$ in $\ell^1$. Conditionally given $(\e^2 J_k)_{k \ge 1}$, we define a sequence $(X_k(\e))_{k \ge 1}$ of independent random variables sampled according to $\mu_{\e^2 J_k}$ respectively, where for every $x > 0$, $\mu_x$ is the probability measure given by
\begin{equation}\label{eq-generalisation_distribution_proportion_sites_ignifuges_dans_un_arbre_de_Cayley}
\mu_x(\d y) = \bigg(\frac{x^3}{2 \pi y (x - y)^3}\bigg)^{1/2} \exp\bigg(-\frac{x y}{2 (x - y)}\bigg) \d y,
\qquad 0 < y < x.
\end{equation}
Note that if $X$ is distributed as $\mu_x$, then $x^{-1} X$ is distributed as $D(x^{1/2})$, defined in \eqref{eq-distribution_D_infini}. Indeed, $\mu_x$ is the limit of the number of fireproof vertices in a subtree of asymptotic size $x$ (see \autoref{lem-generalistion_thm_convergence_densite_sites_ignifuges_cas_sous_critique} for a precise statement). Informally, summing over all subtrees, since the dynamics on each are independent, we get
\begin{equation}\label{eq-convergence_somme_arbres_brules_cas_sous_critique}
\lim_{n \to \infty} p_n^2 I_n = \sum_{k=1}^\infty X_k(\e)
\quad\text{in distribution.}
\end{equation}
\autoref{thm-sous_crit} finally follows from the identity
\begin{equation}\label{eq-identification_xi_e_gaussienne_au_carre}
\sum_{k=1}^\infty X_k(\e) = \Gauss^2
\quad\text{in distribution.}
\end{equation}
To derive the latter, note that, conditionally given $\e$, the sequence $(\e^2 J_k)_{k \ge 1}$ is distributed as the ranked atoms of a Poisson random measure on $(0, \infty)$ with intensity $\e (2 \pi x^3)^{-1/2} \d x$. Further, conditionally given $\e$, the sequence $(\e^2 J_k, X_k(\e))_{k \ge 1}$ is distributed as the atoms of a Poisson random measure on $(0, \infty)^2$ with intensity $\e (2 \pi x^3)^{-1/2} \d x \mu_x(\d y)$, ranked in the non-increasing order of the first coordinate. Therefore, conditioning first on $\e$, using Laplace formula and then averaging, we have for any $q > 0$,
\begin{equation*}
\E\bigg[\exp\bigg(-q \sum_{k=1}^\infty X_k(\e)\bigg)\bigg]
= \int_0^\infty \exp\bigg(- \int_{(0, \infty)^2} (1 - \ex^{-qy}) t (2 \pi x^3)^{-1/2} \d x \mu_x(\d y)\bigg) \ex^{-t} \d t.
\end{equation*}
Using the definition of $\mu_x$ and the change of variables $(x, y) \mapsto (y(x-y)^{-1/2}, y)$, we see that the right-hand side is equal to
\begin{equation*}
\int_0^\infty \exp\bigg(- t - t \int_0 ^\infty (1 - \ex^{-qy}) \ex^{- y/2} \frac{\d y}{\sqrt{2 \pi y^3}}\bigg) \d t.
\end{equation*}
We write
\begin{equation*}
(1 - \ex^{-qy}) \ex^{- y/2}
= \bigg(1 - \exp\bigg(- \frac{2q+1}{2} y\bigg)\bigg) - \bigg(1 - \exp\bigg(- \frac{y}{2}\bigg)\bigg);
\end{equation*}
since
\begin{equation*}
\int_0^\infty \bigg(1 - \exp\bigg( - \frac{z^2 y}{2}\bigg)\bigg) \frac{\d y}{\sqrt{2 \pi y^3}} = z
\quad\text{for any } z > 0,
\end{equation*}
we finally get,
\begin{equation*}
\E\bigg[\exp\bigg(-q \sum_{k=1}^\infty X_k(\e)\bigg)\bigg]
= \int_0^\infty \exp\big(- t \sqrt{2q+1}\big) \d t
= (2q+1)^{-1/2}
= \E\big[\exp\big(- q \Gauss^2\big)\big].
\end{equation*}

In the rest of this section, we first prove the convergence of the sequence of the sizes of the non-burnt trees after the first fire. We then establish \eqref{eq-convergence_somme_arbres_brules_cas_sous_critique} which, by \eqref{eq-identification_xi_e_gaussienne_au_carre}, proves \autoref{thm-sous_crit}. Finally, we prove \autoref{pro-composante_geante_cas_sous_critique}.

\subsection{Configuration at the instant of the first fire}\label{section41}

Recall that we denote by $\zeta_n$ the first instant where an edge is set on fire during the dynamics on $\tn$. Then $\zeta_n$ is a truncated geometric random variable:
\begin{align*}
& \P(\zeta_n = \infty) = (1-p_n)^{n-1} \to 0 \text{ as } n \to \infty,
\\
\text{and } & \P(\zeta_n = k) = p_n (1-p_n)^{k-1} \quad\text{for every } k \in \{1, \dots, n-1\},
\end{align*}
and $p_n \zeta_n$ converges in distribution to an exponential random variable with parameter $1$. We work first on the event $\{\zeta_n = k\}$, $k \ge 1$ fixed. At the $k-1$-st step of the dynamics, we have deleted $k-1$ edges of $\tn$ uniformly at random to form a forest of $k$ trees $\t_{n,1}, \dots, \t_{n,k}$ where the labeling is made uniformly at random. We know from Lemma 5 in Bertoin \cite{Bertoin-Fires_on_trees} (see also Pavlov \cite{Pavlov-The_asymptotic_distribution_of_maximum_tree_size_in_a_random_forest} or Pitman \cite{Pitman-Coalescent_random_forests}) that the sizes of these $k$ subtrees are i.i.d. Borel(1) random variables conditioned to have sum $n$: for any $n_1, \dots, n_k \ge 1$ such that $n_1 + \dots + n_k = n$,
\begin{equation}\label{eq-loi_foret_uniforme}
\P(\vert\t_{n,1}\vert = n_1, \dots, \vert\t_{n,k}\vert = n_k) = \frac{(n-k)!}{k n^{n-k-1}} \prod_{j = 1}^k \frac{n_j^{n_j-1}}{n_j!}.
\end{equation}
Moreover, conditionally on the partition of $\{1, \dots, n\}$ induced by the $k$ subsets of vertices of these subtrees, the $\t_{n,i}$'s are independent uniform Cayley trees on their respective set of vertices. Recall that the Borel(1) distribution belongs to the domain of attraction of the stable law of index $1/2$ so that, taking a number of order $p_n^{-1}$ of i.i.d. such random variables, the sum is typically of order $p_n^{-2}$. Then, loosely speaking, conditioning this sum to be abnormally large, here of order $n$, essentially amounts to conditioning one single variable to be large, the others being almost unaffected. As we pick one subtree proportionally to its number of edges, the giant tree is set on fire with high probability and we are left with a collection of trees with sizes roughly given by i.i.d. Borel(1) random variables. These two features are formalized in the next proposition.

We denote by $\t_{n,1}, \dots, \t_{n,\zeta_n}$ the forest defined as above: we define $\t_{n,1}, \dots, \t_{n,k}$ conditionally on the event $\{\zeta_n = k\}$ and then average with respect to $\zeta_n$. Denote also by $\vert\t_{n,1}\vert^* \ge \dots \ge \vert\t_{n,\zeta_n}\vert^*$ a non-increasing rearrangement of the sizes of these trees. Let $\e$ be an exponential random variable with parameter $1$ and independently, $\sigma$ a subordinator distributed as \eqref{eq-definition_subordinateur_S}.

\begin{Pro}\label{pro-convergence_arbres_non_brules_sauts_subordinateur_cas_sous_critique}
We have
\begin{equation*}
\lim_{n \to \infty} p_n^2 (n - \vert\t_{n,1}\vert^*, \vert\t_{n,2}\vert^*, \dots, \vert\t_{n,\zeta_n}\vert^*) = \e^2 (\sigma(1), J_1, J_2, \dots)
\end{equation*}
in distribution.
\end{Pro}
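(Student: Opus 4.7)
The plan is to condition on $\{\zeta_n = k_n\}$ with $p_n k_n \to t > 0$ and prove the convergence in this conditional setting in a way that depends continuously on $t$; since $p_n \zeta_n$ converges in distribution to $\e$, averaging over $\zeta_n$ (as in the proof of \autoref{pro-convergence_tailles_des_composantes_GWT_vers_masses_du_CRT}) will then yield the full statement. Under $\{\zeta_n = k_n\}$, by \eqref{eq-loi_foret_uniforme} the vector $(\vert\t_{n,1}\vert,\dots,\vert\t_{n,k_n}\vert)$ is distributed as $k_n$ i.i.d.\ Borel$(1)$ variables $(\beta_1,\dots,\beta_{k_n})$ conditioned on $\sum_i\beta_i = n$. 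Since in the subcritical regime $k_n \sim t/p_n \ll \sqrt{n}$, the unconditioned sum $T_{k_n}$ is typically of order $k_n^2 \ll n$, so conditioning on $T_{k_n}=n$ falls in a "one big jump" regime: a single variable will absorb a mass close to $n$, while the remaining $k_n-1$ variables should behave essentially like an unconditioned Borel$(1)$ sample.

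The core technical step is a one-big-jump estimate for the maximum. Using the explicit Borel and Borel--Tanner mass functions together with Stirling's formula, I would compute
\begin{equation*}
\P\bigl(\vert\t_{n,1}\vert^* = n-q \,\bigm\vert\, \zeta_n = k_n\bigr) \;\sim\; \frac{k_n \, \P(\beta_1 = n-q)\,\P(T_{k_n-1} = q)}{\P(T_{k_n} = n)},
\end{equation*}
for $q$ of order $p_n^{-2}$, the error term coming from the vanishing probability that two or more variables are comparable to $n$. Applying the local limit theorem underlying \autoref{lem-convergence_somme_borel} to estimate $\P(T_{k_n}=n)$ and $\P(T_{k_n-1}=q)$ by the density of a stable $1/2$ subordinator, then making the change of variable $q = p_n^{-2}s$, one finds that the right-hand side, multiplied by $p_n^{-2}$, converges to the density at $s$ of $t^2\sigma(1)$. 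This yields $p_n^2(n - \vert\t_{n,1}\vert^*) \to t^2\sigma(1)$ in distribution.

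For the joint convergence, I would condition further on $\vert\t_{n,1}\vert^* = m$ with $p_n^2(n-m) \to \tau > 0$. By exchangeability, the remaining $k_n-1$ tree sizes are then i.i.d.\ Borel$(1)$ conditioned to sum to $n-m$ (the additional constraint of being strictly less than $m$ is asymptotically inoperative since $m/(n-m) \to \infty$). Setting $n' = n-m$ and $k' = k_n-1$, we have $k'/\sqrt{n'} \to t/\sqrt{\tau}$ and sum $= n' = \nu \cdot n'$ with $\nu = 1$, so \autoref{lem-convergence_somme_borel} applied with $\lambda = t/\sqrt{\tau}$, $\nu = 1$ gives $p_n^2(\vert\t_{n,2}\vert^*,\vert\t_{n,3}\vert^*,\dots)$ converging in distribution, for the $\ell^1$ topology, to $(t^2 J_1, t^2 J_2,\dots)$ under the law of the ranked jumps of $\sigma$ conditioned on $\sigma(1) = \tau/t^2$. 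Combining with the previous step, the dependence on $\tau$ is precisely the one prescribed by the subordinator bridge, so by the continuity of these conditional laws established in \autoref{pro-continuite_sauts_pont_stable} (and a Skorohod representation to pass from conditional convergence in $\tau$ to a joint statement), one obtains
\begin{equation*}
p_n^2\bigl(n - \vert\t_{n,1}\vert^*,\vert\t_{n,2}\vert^*,\dots,\vert\t_{n,k_n}\vert^*\bigr) \xrightarrow[n\to\infty]{} t^2\bigl(\sigma(1), J_1, J_2,\dots\bigr)
\end{equation*}
in distribution, where $t^2\sigma(1) = \sum_{k\geq 1} t^2 J_k$ is automatic.

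Finally, the limit law above depends continuously on $t$, so the argument of \autoref{pro-convergence_tailles_des_composantes_GWT_vers_masses_du_CRT} (continuous convergence combined with tightness of $(p_n\zeta_n)$) allows averaging over $p_n\zeta_n \to \e$ to produce the stated limit $\e^2(\sigma(1),J_1,J_2,\dots)$. The main obstacle is the one-big-jump estimate: controlling the Stirling asymptotics uniformly in the relevant range of $q$, and bounding the probability that two of the $\beta_i$ are simultaneously of order $n$, both of which use that $np_n^2 \to \infty$ to ensure we are genuinely in the heavy-tail conditioning regime. Once this is settled, Step 3 is a direct application of \autoref{lem-convergence_somme_borel} and Step 4 is routine.
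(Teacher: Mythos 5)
Your proof is correct in outline, but it takes a genuinely different route from the paper. The paper's own proof is very short: it invokes equation (34) of Aldous and Pitman \cite{Aldous_Pitman-Standard_additive_coalescent}, which already gives
$k_n^{-2}\bigl(n - \vert\t_{n,1}\vert^*, \vert\t_{n,2}\vert^*, \dots, \vert\t_{n,k_n}\vert^*\bigr) \to (\sigma(1), J_1, J_2, \dots)$
for any sequence $k_n = o(n^{1/2})$, and then performs exactly the same deconditioning step you describe at the end (Skorohod representation of $p_n\zeta_n \to \e$ plus continuous convergence of the map $x \mapsto F_n(x)$). What you propose instead is to reprove the Aldous--Pitman convergence from scratch via a one-big-jump decomposition: first a local estimate for the marginal of the maximum (using Stirling and the local limit theorem for Borel variables to show $\P\bigl(\vert\t_{n,1}\vert^* = n - q \mid \zeta_n = k_n\bigr) \sim k_n \P(\beta_1 = n-q)\P(T_{k_n-1}=q)/\P(T_{k_n}=n)$, whose rescaled limit is the density of $t^2\sigma(1)$), then a conditional statement for the remaining trees using \autoref{lem-convergence_somme_borel} with $\lambda = t/\sqrt\tau$ and $\nu = 1$, glued together via the bridge continuity of \autoref{pro-continuite_sauts_pont_stable}. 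Your scaling computations are correct, and the identification $t^2 J_k$ under $\sigma(1)=\tau/t^2$ does reproduce the joint law $t^2(\sigma(1), J_1, J_2, \dots)$. The trade-off: the paper's argument is two lines because it outsources the hard work to a reference, whereas yours is self-contained and makes the one-big-jump mechanism explicit, reusing the machinery the paper already built in \autoref{section2}. The price is that several technical points you flag (uniform Stirling estimates over the relevant range of $q$, the negligibility of two simultaneous large values, the decoupling of the non-maximal coordinates from the constraint of being $< m$, and the upgrade from the two-stage conditional statement to a joint weak limit) would all need to be written out carefully; they are all standard in the big-jump literature and none is a genuine obstacle, but together they amount to reproving a result the paper can simply cite.
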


\begin{proof}
The proof is similar to that of \autoref{pro-convergence_tailles_des_composantes_GWT_vers_masses_du_CRT}. Aldous and Pitman \cite{Aldous_Pitman-Standard_additive_coalescent}, equation (34), provide the convergence in distribution
\begin{equation*}
\lim_{n \to \infty} k_n^{-2} (n - \vert\t_{n,1}\vert^*, \vert\t_{n,2}\vert^*, \dots, \vert\t_{n,k_n}\vert^*) = (\sigma(1), J_1, J_2, \dots)
\end{equation*}
for any sequence $k_n = o(n^{1/2})$. Let $f \colon \ell^1(\R) \to \R$ be a continuous and bounded function and set for any $x > 0$
\begin{equation*}
F_n(x) \coloneqq \E\big[f\big(p_n^2 \big(n - \vert\t_{n,1}\vert^*, \vert\t_{n,2}\vert^*, \dots, \vert\t_{n,\lfloor x p_n^{-1}\rfloor}\vert^*\big)\big)\big],
\quad\text{and}\quad
F(x) \coloneqq \E\big[f\big(x^2 \big(\sigma(1), J_1, J_2, \dots\big)\big)\big].
\end{equation*}
The previous convergence yields $\lim_{n \to \infty} F_n(x_n) = F(x)$ whenever $\lim_{n \to \infty} x_n = x$. Using Skorohod's representation Theorem, we may suppose $\lim_{n \to \infty} p_n \zeta_n = \e$ almost surely, then $\lim_{n \to \infty} F_n(p_n \zeta_n) = F(\e)$ almost surely and the claim follows from Lebesgue's Theorem.
\end{proof}

From this result, we see that with high probability, the first burnt subtree has a size of order $n$ and the forest that we obtain by discarding this tree and the edges previously fireproof has a total size of order $p_n^{-2} = o(n)$. This already strengthens the result \ref{thm-transition_de_phase_sous} of the introduction. The fire dynamics then continue independently on each tree of this forest and the total number of fireproof vertices is the sum of the number of fireproof vertices in each component.

\subsection{Total number of fireproof vertices}\label{section42}

We now study the dynamics on the remaining forest after the first fire. We know from \autoref{pro-convergence_arbres_non_brules_sauts_subordinateur_cas_sous_critique} that with high probability, the largest trees have size of order $p_n^{-2}$ so that they are now critical for the dynamics which continue on each with $p_n = (p_n^{-2})^{-1/2}$. To see this, we slightly generalize the convergence \ref{thm-transition_de_phase_crit} of the introduction. Let $(\tn')_{n \ge 1}$ be a sequence of Cayley trees with size $\vert\tn'\vert \sim a p_n^{-2}$ as $n \to \infty$ for some $a > 0$ and define $I_n' = \Card\{i \in \tn': i \text{ is fireproof}\}$.

\begin{Lem}\label{lem-generalistion_thm_convergence_densite_sites_ignifuges_cas_sous_critique}
The law of $p_n^2 I_n'$ converges weakly to the distribution $\mu_a$ defined by \eqref{eq-generalisation_distribution_proportion_sites_ignifuges_dans_un_arbre_de_Cayley}.
\end{Lem}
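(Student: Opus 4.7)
The plan is to reduce this to item (iii) of the phase transition result recalled in the introduction. The key observation is that, writing $N_n \coloneqq \vert \tn' \vert$, the assumption $N_n \sim a p_n^{-2}$ forces $N_n^{1/2} p_n \to a^{1/2}$, i.e. the fire dynamics on $\tn'$ is critical \emph{relative to the size of $\tn'$}, with parameter $c = a^{1/2}$.

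First I would observe that item (iii) extends verbatim to any sequence of uniform Cayley trees whose size $N_n$ tends to infinity together with a fire probability $q_n$ satisfying $N_n^{1/2} q_n \to c > 0$. In the statement cited from \cite{Bertoin-Fires_on_trees}, the index $n$ plays simultaneously the role of the tree size, but the argument is entirely asymptotic and a routine reindexing yields
\[
N_n^{-1} I_n' \xrightarrow[n \to \infty]{\text{(d)}} D(a^{1/2}).
\]
Since $p_n^2 N_n \to a$ \emph{deterministically}, Slutsky's lemma then gives $p_n^2 I_n' \to a \cdot D(a^{1/2})$ in distribution.

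It remains to check that the law of $a \cdot D(a^{1/2})$ is precisely $\mu_a$. This is a routine change of variables: starting from the density \eqref{eq-distribution_D_infini} of $D(a^{1/2})$ on $(0,1)$ and substituting $y = a x$, so that $1 - x = (a-y)/a$ and $\d x = a^{-1} \d y$, one recovers after simplification exactly the density
\[
\frac{a^{3/2}}{\sqrt{2 \pi y (a - y)^3}} \exp\biggl(-\frac{a y}{2 (a - y)}\biggr)
\]
on $(0, a)$, which is the density of $\mu_a$ defined in \eqref{eq-generalisation_distribution_proportion_sites_ignifuges_dans_un_arbre_de_Cayley}.

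The only point that deserves any care is the reindexing in the first step: one must be confident that the critical regime convergence holds along arbitrary sequences of sizes, not only along $n$ itself. Since both the statement and the proof of item (iii) use the index only through the asymptotic relation $n^{1/2} p_n \to c$, this presents no genuine obstacle.
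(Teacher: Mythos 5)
Your proof is correct and follows essentially the same route as the paper: invoke the critical-regime convergence from Bertoin's Theorem~1 with $N_n^{1/2} p_n \to a^{1/2}$ to get $N_n^{-1} I_n' \to D(a^{1/2})$, multiply by the deterministic convergence $p_n^2 N_n \to a$, and identify $a\,D(a^{1/2})$ with $\mu_a$ by a change of variables. The paper leaves the last identification to the reader ("one easily checks"); you simply spell it out.
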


\begin{proof}
The proof of Theorem~1 in Bertoin \cite{Bertoin-Fires_on_trees} shows that $\vert\tn'\vert^{-1} I_n'$, the proportion of fireproof vertices in $\tn'$, converges in distribution to $D(a^{1/2})$, as defined in \eqref{eq-distribution_D_infini}. Since $p_n^2 \vert\tn'\vert \to a$, we get $p_n^2 I_n' \to a D(a^{1/2})$ in distribution. One easily checks that the latter is distributed according to~$\mu_a$.
\end{proof}

Using \autoref{pro-convergence_arbres_non_brules_sauts_subordinateur_cas_sous_critique} and \autoref{lem-generalistion_thm_convergence_densite_sites_ignifuges_cas_sous_critique}, we can now prove \eqref{eq-convergence_somme_arbres_brules_cas_sous_critique} and so, \autoref{thm-sous_crit}.

\begin{proof}[Proof of \autoref{thm-sous_crit}]
Conditionally given $\zeta_n$, we write $(\t_{n,1}', \dots, \t_{n,\zeta_n}')$ for the trees obtained by deleting the first $\zeta_n-1$ fireproof edges, listed so that $\t_{n,1}'$ is the tree burnt at time $\zeta_n$ and $\vert\t_{n,2}'\vert \ge \dots \ge \vert\t_{n,\zeta_n}'\vert$. Note that
\begin{equation*}
I_n = \sum_{k=2}^{\zeta_n} \Card\{i \in \t_{n,k}': i \text{ is fireproof}\}.
\end{equation*}
From \autoref{pro-convergence_arbres_non_brules_sauts_subordinateur_cas_sous_critique}, we have
\begin{equation*}
\lim_{n \to \infty} p_n^2 (n-\vert\t_{n,1}'\vert, \vert\t_{n,2}'\vert, \dots, \vert\t_{n,\zeta_n}'\vert) = \e^2 (\sigma(1), J_1, J_2, \dots)
\end{equation*}
in distribution. Therefore, for any $\eps > 0$ there exists $N \in \N$ and then $n_0 \in \N$ such that
\begin{equation*}
\P\bigg(\sum_{k = N}^\infty \e^2 J_k >\eps\bigg) < \eps,
\quad\text{and for any } n \ge n_0,\quad
\P\bigg(\sum_{k = N+1}^{\zeta_n} p_n^2 \vert\t_{n,k}'\vert > \eps\bigg) < \eps.
\end{equation*}
Recall that, conditionally given $(\e^2 J_k)_{k \ge 1}$, $(X_k(\e))_{k \ge 1}$ is a sequence of independent random variables sampled according to $\mu_{\e^2 J_k}$ respectively, where for every $x > 0$, $\mu_x$ is the probability measure on $(0,x)$ given by \eqref{eq-generalisation_distribution_proportion_sites_ignifuges_dans_un_arbre_de_Cayley}. In particular, $X_k(\e) \le \e^2 J_k$ for every $k \ge 1$; we also have $\Card\{i \in \t_{n,k}': i \text{ is fireproof}\} \le \vert\t_{n,k}'\vert$. Then
\begin{equation*}
\P\bigg(\sum_{k = N}^\infty X_k(\e) >\eps\bigg) < \eps,
\ \text{and for any } n \ge n_0,\
\P\bigg(\sum_{k = N+1}^{\zeta_n} p_n^2 \Card\{i \in \t_{n,k}': i \text{ is fireproof}\} > \eps\bigg) < \eps.
\end{equation*}
Conditionally on the partition of $\{1, \dots, n\}$ induced by the subsets of vertices of the subtrees, the $\t_{n,k}'$'s are independent uniform Cayley trees on their respective set of vertices. \autoref{pro-convergence_arbres_non_brules_sauts_subordinateur_cas_sous_critique} and \autoref{lem-generalistion_thm_convergence_densite_sites_ignifuges_cas_sous_critique} thus yield
\begin{equation*}
\lim_{n \to \infty} \sum_{k = 2}^N p_n^2 \Card\{i \in \t_{n,k}': i \text{ is fireproof}\} = \sum_{k = 1}^{N-1} X_k(\e)
\quad\text{in distribution.}
\end{equation*}
Since the rests are arbitrary small with high probability, we get
\begin{equation*}
\lim_{n \to \infty} \sum_{k = 2}^{\zeta_n} p_n^2 \Card\{i \in \t_{n,k}': i \text{ is fireproof}\} = \sum_{k = 1}^\infty X_k(\e)
\quad\text{in distribution.}
\end{equation*}
The above convergence is \eqref{eq-convergence_somme_arbres_brules_cas_sous_critique}, \autoref{thm-sous_crit} then follows from \eqref{eq-identification_xi_e_gaussienne_au_carre}.
\end{proof}

Combined with the results of Bertoin \cite{Bertoin-Fires_on_trees}, \autoref{pro-convergence_arbres_non_brules_sauts_subordinateur_cas_sous_critique} and \autoref{lem-generalistion_thm_convergence_densite_sites_ignifuges_cas_sous_critique} also entail \autoref{pro-composante_geante_cas_sous_critique} about the size of the largest fireproof connected component.

\begin{proof}[Proof of \autoref{pro-composante_geante_cas_sous_critique}]
Fix $\eps > 0$ and $\delta \in (0, 1/2)$. Let $\sigma$ be a subordinator distributed as \eqref{eq-definition_subordinateur_S} and $\chi \in (0, \eps)$ such that the probability that $\sigma$ admits no jump larger than $\chi$ during the time interval $[0, 1]$ is less than $\delta$. Consider the subtrees of $\tn$ larger than $\chi p_n^{-2}$ when an edge is set on fire for the first time. From \autoref{pro-convergence_arbres_non_brules_sauts_subordinateur_cas_sous_critique}, we know that the number of such trees converges to the number of jumps larger than $\chi$ made by $\sigma$ before time 1. The latter is almost surely finite and non-zero with a probability greater than $1-\delta$. Now from \autoref{lem-generalistion_thm_convergence_densite_sites_ignifuges_cas_sous_critique}, these subtrees are critical and thus for each, from Corollary 1 and Proposition 1 of Bertoin \cite{Bertoin-Fires_on_trees}, the probability that there exists a fireproof component larger than $\eps p_n^{-2}$ tends to $0$ and the probability that there exists at least one larger than $n^{- \eps} p_n^{-2}$ tends to $1$. Therefore for any $n$ large enough, on the one hand there exists in $\tn$ a fireproof subtree larger than $n^{- \eps} p_n^{-2}$ and on the other hand there exists none larger than $\eps p_n^{-2}$, both with a probability at least $1-2\delta$. The claim follows since $\delta$ is arbitrary.
\end{proof}


\section{Asymptotic proportion of burnt vertices in the supercritical regime}\label{section5}

We finally consider the supercritical regime $n^{-1} \ll p_n \ll n^{-1/2}$ and prove \autoref{thm-sur_crit_1}. Recall that $\b_{n,1}, \dots, \b_{n,\kappa_n}$ denote the sizes of the burnt subtrees, listed in order of appearance. Let $(\e_i)_{i \ge 1}$ be a sequence of independent exponential random variables with parameter $1$ and for each $i \ge 1$, denote by $\gamma_i \coloneqq \e_1 + \dots + \e_i$. Let also $(\Gauss_i)_{i \ge 1}$ be a sequence of i.i.d. standard Gaussian random variables, independent of $(\gamma_i)_{i \ge 1}$. We shall prove the following result.

\begin{Thm}\label{thm-sur_crit_2}
We have
\begin{equation*}
\lim_{n \to \infty} (n p_n)^{-2} (\b_{n,1}, \dots, \b_{n,\kappa_n}) = (\gamma_i^{-2} \Gauss_i^2)_{i \ge 1}
\end{equation*}
in distribution for the $\ell^1$ topology.
\end{Thm}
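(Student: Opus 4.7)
The plan is to establish two ingredients: (a) for every fixed integer $j\ge 1$, the joint convergence
$(np_n)^{-2}(\b_{n,1},\ldots,\b_{n,j})\to(\gamma_1^{-2}\Gauss_1^2,\ldots,\gamma_j^{-2}\Gauss_j^2)$ in distribution; and (b) the tail control $\lim_{j\to\infty}\limsup_{n\to\infty}\P\big((np_n)^{-2}\sum_{i>j}\b_{n,i}>\eps\big)=0$ for every $\eps>0$. A standard diagonal argument upgrades (a) and (b) to the $\ell^1$-convergence in distribution.

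For (a), I would proceed by induction on $j$; the base case is the first fire. Let $\zeta_n$ denote its time, a truncated geometric random variable with $p_n\zeta_n\to\e_1$ in distribution. At time $\zeta_n-1$, the tree $\tn$ has been split into a uniform forest of $k_n:=\zeta_n$ subtrees whose sizes are i.i.d.\ Borel$(1)$ random variables conditioned to sum to $n$, by \eqref{eq-loi_foret_uniforme}; the first burnt subtree is then chosen proportionally to its number of edges. Since $p_n\ll n^{-1/2}$, we have $k_n^2/n\to\infty$---dual to the critical regime of \autoref{lem-convergence_somme_borel}, the sum being conditioned to be atypically \emph{small}---so I would use the exponential tilt $s=1-k_n/n$, which preserves the conditional distribution but recenters the marginals at the correct mean $n/k_n$. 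Stirling's formula then gives an approximate Borel$(s)$ density proportional to $t^{-3/2}\exp(-(k_n/n)^2 t/2)$; size-biasing turns $t^{-3/2}$ into $t^{-1/2}$, and the substitution $u=(k_n/n)^2 t$ produces the chi-squared density $(2\pi u)^{-1/2}e^{-u/2}$. Together with $k_n/n\sim\e_1/(np_n)$, this yields $(np_n)^{-2}\b_{n,1}\to\gamma_1^{-2}\Gauss_1^2$. The inductive step rests on the Markovian restart structure of the dynamics: after the $i$-th fire, the types of the untouched edges remain i.i.d.\ with parameter $p_n$, so the number of additional fireproofings before the next fire is an independent truncated geometric, and conditional on the partition and sizes, the remaining subtrees are independent uniform Cayley trees by \eqref{eq-loi_foret_uniforme}. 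Since $\b_{n,1}+\cdots+\b_{n,i}=O((np_n)^2)=o(n)$, the forest at the time of the $(i+1)$-th fire still has $\sim n$ vertices and $\gamma_{i+1}/p_n+o(1/p_n)$ subtrees, and the same size-biased analysis gives $(np_n)^{-2}\b_{n,i+1}\to\gamma_{i+1}^{-2}\Gauss_{i+1}^2$ asymptotically independently of the past given $(\gamma_1,\ldots,\gamma_{i+1})$.

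For the tail bound (b), I would apply Markov's inequality: conditionally on the state right after the $i$-th fire, size-biased sampling gives $\E[\b_{n,i+1}\mid\mathcal{F}^{(i)}]\le\E\big[\sum_r T_r^2\mid\text{forest}\big]/(m^{(i)}-k^{(i)})$ with $m^{(i)},k^{(i)}$ the current vertex and tree counts. Under the tilted Borel$(1-k^{(i)}/m^{(i)})$ representation, one tree has second moment of order $(m^{(i)}/k^{(i)})^3$, hence $\E[\b_{n,i+1}\mid\mathcal{F}^{(i)}]=O((m^{(i)}/k^{(i)})^2)=O((np_n/\gamma_{i+1})^2)$. Summing in $i$ with $\E[\gamma_i^{-2}]=O(i^{-2})$ (for $i\ge 3$) gives $\sum_{i>j}\E[(np_n)^{-2}\b_{n,i}]=O(1/j)$ uniformly in $n$ for large enough $j$, closing the tail bound. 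As a byproduct, \autoref{thm-sur_crit_1} follows from the identity $\sum_{i\ge 1}\gamma_i^{-2}\Gauss_i^2\stackrel{d}{=}\Gauss^{-2}$, verified via Campbell's formula applied to the Poisson process $(\gamma_i)$: $\E\big[\exp(-q\sum_i\gamma_i^{-2}\Gauss_i^2)\big]=\exp\big(-\int_0^\infty(1-x/\sqrt{x^2+2q})\,dx\big)=e^{-\sqrt{2q}}$, which is the Laplace transform of $\Gauss^{-2}$.

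The main obstacle is the size-biased local limit analysis of the Borel-conditioned forest in the regime $k_n^2/n\to\infty$, dual to and not directly contained in \autoref{lem-convergence_somme_borel}, together with the uniform-in-$n$ second-moment estimate required to close the tail. A secondary difficulty is propagating joint convergence across successive Markov restarts so that the limiting Gaussians $\Gauss_i$ are genuinely independent across distinct fires.
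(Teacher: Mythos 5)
Your two-ingredient plan --- finite-dimensional convergence of the first $j$ burnt components, then a uniform tail bound, then a diagonal argument --- is exactly the paper's strategy (\autoref{pro-taille_premiers_arbres_brules_cas_sur_critique}, \autoref{lem-borne_restes_serie_taille_arbres_brules}, and the decomposition $S_n(j)+R_n(j)$ in the proof of \autoref{thm-sur_crit_2}). For the first ingredient, your exponential-tilting and size-biasing sketch is a repackaging of \autoref{lem-taille_premier_arbre_brule_cas_sur_critique}: the paper conditions on $\zeta_n=k_n\sim c p_n^{-1}$, writes the size-biased Borel-conditioned mass function of $\b_{n,1}$ explicitly, applies Stirling and Scheff\'e, and then averages over $c$; the inductive step for subsequent fires exploits that $\b_{n,1}+\cdots+\b_{n,i}=o(n)$ and that, conditionally on the induced partition, the remaining pieces are again independent uniform Cayley trees. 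Your identity $\sum_i\gamma_i^{-2}\Gauss_i^2\stackrel{d}{=}\Gauss^{-2}$ via Campbell's formula also matches the paper's derivation of \autoref{thm-sur_crit_1} from \autoref{thm-sur_crit_2}.

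Where you diverge, and where there is a genuine gap, is the tail bound. You estimate $\E[\b_{n,i+1}\mid\mathcal{F}^{(i)}]$ by plugging unconditioned moments of a tilted Borel$(1-k^{(i)}/m^{(i)})$ variable. But the subtree sizes are i.i.d.\ Borel$(1)$ conditioned to have sum $m^{(i)}$; the tilt matches the mean but does not remove the conditioning, and the conditioned size-biased moment is not a formal consequence of the unconditioned one --- establishing $\E\big[\sum_{i=1}^k(\beta_{n,i}-1)\beta_{n,i}/n\big]\le K(n/k)^2$ \emph{uniformly} in $1\le k\le n$ is exactly the nontrivial input that the paper imports from Proposition 1 of Bertoin's ``Burning cars in a parking lot'' (together with Lemma 5 and equation (2) there). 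Moreover, $\gamma_{i+1}$ is not $\mathcal{F}^{(i)}$-measurable, and $k^{(i)}$ can be as large as $n$, so replacing $k^{(i)}$ by $\gamma_{i+1}/p_n$ and summing $\E[\gamma_i^{-2}]=O(i^{-2})$ does not close the estimate uniformly in $n$. The paper sidesteps both difficulties by coupling the burnt subtrees to an auxiliary ``marked'' process whose step index $k$ is deterministic, then summing $\E[\b_{n,k}']$ over $\lfloor a p_n^{-1}\rfloor\le k\le n-u_n$ with $u_n=n\exp(-\sqrt{n p_n})$, and controlling the two extremes by the law of large numbers for $\zeta_{n,j_0}$ and by the event that no mark falls in the last $u_n$ steps. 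You do flag this uniform second-moment estimate as the main obstacle, and indeed it is: as written it is not a self-contained computation but the step that, in the paper, rests on a cited result.
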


\autoref{thm-sur_crit_1} follows as a corollary.

\begin{proof}[Proof of \autoref{thm-sur_crit_1}]
As a consequence of \autoref{thm-sur_crit_2}, we have the convergence of the sums:
\begin{equation*}
\lim_{n \to \infty} (n p_n)^{-2} B_n = \sum_{i = 1}^\infty \gamma_i^{-2} \Gauss_i^2
\qquad\text{in distribution.}
\end{equation*}
Note that the sequence $(\gamma_i)_{i \ge 1}$ is distributed as the atoms of a Poisson random measure on $(0, \infty)$ with intensity $\d x$, it follows readily that the sequence $(\gamma_i^{-2} \Gauss_i^2)_{i \ge 1}$ is distributed as the atoms of a Poisson random measure on $(0, \infty)$ with intensity $(2\pi x^3)^{-1/2} \d x$. The above limit is thus distributed as $\sigma(1)$ where $\sigma$ is the subordinator defined by \eqref{eq-definition_subordinateur_S}; \autoref{thm-sur_crit_1} finally follows from the well-known identity $\sigma(1) = \Gauss^{-2}$ in distribution.
%
\end{proof}

As discussed in the introduction, in order to prove \autoref{thm-sur_crit_2}, we first show the joint convergence of the first $j$ coordinates for any $j \ge 1$, and then that, taking $j$ large enough, the other coordinates are arbitrary small with high probability. We conclude in the same manner as in the proof of \autoref{thm-sous_crit}.

\subsection{Asymptotic size of the first burnt subtrees}\label{section51}

We first prove the convergence of the size of the first burnt subtree $\b_{n,1}$. As in the preceding section, we let the dynamics evolve until an edge is set on fire for the first time, denoting this random time by $\zeta_n$. The size of the tree that burns at this instant is distributed as one among $\zeta_n$ i.i.d. Borel(1) random variables conditioned to have sum $n$, chosen proportionally to its value minus~1. As we have seen, $p_n \zeta_n$ converges in distribution to an exponential random variable with parameter $1$, thus $\zeta_n$ is typically of order $p_n^{-1}$ and the sum of $\zeta_n$ i.i.d. Borel(1) random variables is of order $p_n^{-2}$. In the previous section, we considered $p_n^{-2} = o(n)$ and we have seen that conditioning these random variables to have sum $n$ essentially amounts to conditioning one to be of order $n$ (\autoref{pro-convergence_arbres_non_brules_sauts_subordinateur_cas_sous_critique}). The behavior is notoriously different when $n = o(p_n^{-2})$. As an example, Pavlov \cite{Pavlov-The_asymptotic_distribution_of_maximum_tree_size_in_a_random_forest}, Theorem 3, gives an asymptotic of the size of the largest subtree when one removes $k_n-1$ edges uniformly at random, with $n=o(k_n^2)$.

\begin{Lem}\label{lem-taille_premier_arbre_brule_cas_sur_critique}
As $n \to \infty$, $(n p_n)^{-2} \b_{n,1}$ converges in distribution to $\e^{-2} \Gauss^2$ where $\Gauss$ and $\e$ are independent, respectively standard Gaussian and exponential with parameter $1$ distributed.
\end{Lem}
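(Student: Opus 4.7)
The plan is to condition on $\zeta_n$, exploit the joint distribution \eqref{eq-loi_foret_uniforme} of the subtree sizes after the first $\zeta_n - 1$ deletions, and then average over the limiting law of $p_n \zeta_n$, which is exponential with parameter $1$. Given $\zeta_n = k$, by \eqref{eq-loi_foret_uniforme} the sizes $\vert\t_{n,1}\vert, \dots, \vert\t_{n,k}\vert$ are distributed as $k$ i.i.d. Borel$(1)$ variables $\tilde\beta_1, \dots, \tilde\beta_k$ conditioned on $T_k \coloneqq \tilde\beta_1 + \dots + \tilde\beta_k = n$. Since, on $\{\zeta_n = k\}$, the $k$-th edge in the random order is uniform among the $n - k$ edges not yet fireproofed, the burnt subtree is a size-biased pick with weight proportional to its number of edges $\vert\t_{n,i}\vert - 1$. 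Combining these two facts yields, for every $1 \le m \le n-k+1$,
\begin{equation*}
\P(\b_{n,1} = m \mid \zeta_n = k) = \frac{k(m-1)}{n-k} \cdot \frac{\P(\tilde\beta_1 = m)\,\P(T_{k-1} = n-m)}{\P(T_k = n)}.
\end{equation*}

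Next, I condition on a sequence $k_n$ with $p_n k_n \to t > 0$ and set $m = \lfloor (np_n)^2 x \rfloor$ for fixed $x > 0$, so $k_n \ll \sqrt{n}$ and $m \ll n$. Stirling's formula gives $\P(\tilde\beta_1 = m) \sim (2\pi m^3)^{-1/2}$, and the expansion $\ex^{-k}(1 - k/n)^{-(n-k)} = \exp(-k^2/(2n) + O(k^3/n^2))$, valid since $k_n^3/n^2 \to 0$, yields
\begin{equation*}
\P(T_k = n) \sim \frac{k}{n\sqrt{2\pi n}} \exp\bigg(-\frac{k^2}{2n}\bigg)
\quad\text{and}\quad
\frac{\P(T_{k-1} = n-m)}{\P(T_k = n)} \longrightarrow \exp\bigg(-\frac{t^2 x}{2}\bigg)
\end{equation*}
uniformly for $x$ in compact subsets of $(0, \infty)$. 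Including the Jacobian factor $(np_n)^2$, the density of $(np_n)^{-2} \b_{n,1}$ conditional on $\zeta_n = k_n$ converges pointwise on $(0, \infty)$ to
\begin{equation*}
g_t(x) = \frac{t}{\sqrt{2\pi x}} \exp\bigg(-\frac{t^2 x}{2}\bigg),
\end{equation*}
which is precisely the density of $t^{-2} \Gauss^2$. Since $g_t$ is itself a probability density, Scheffé's lemma upgrades this to convergence in total variation, hence in distribution.

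To pass from the convergence given $\zeta_n = k_n$ to the unconditional statement, observe that $p_n \zeta_n \to \e$ in distribution and that the map $t \mapsto g_t$ is continuous for the weak topology. A conditioning-then-averaging argument in the spirit of the closing steps of the proof of \autoref{pro-convergence_tailles_des_composantes_GWT_vers_masses_du_CRT} then yields the claim: $(np_n)^{-2} \b_{n,1} \to \e^{-2} \Gauss^2$, with $\Gauss$ and $\e$ independent by construction.

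The main obstacle is to make the Stirling asymptotics uniform enough both to validate Scheffé's step and to support the mixing over $\zeta_n$, keeping in mind that the conditional probability $\P(T_{k_n} = n)$ itself vanishes so ratios must be handled carefully. A cleaner alternative, should this prove technically painful, is to compute $\E[\ex^{-q (np_n)^{-2} \b_{n,1}} \mid \zeta_n = k_n]$ directly from the formula above and identify the limiting Laplace transform $(1 + 2q/t^2)^{-1/2}$ of $t^{-2} \Gauss^2$, which bypasses the need for a local density statement altogether.
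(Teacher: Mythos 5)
Your proposal matches the paper's proof essentially step by step: both condition on $\zeta_n = k_n \sim c p_n^{-1}$, express the law of $\b_{n,1}$ as a size-biased pick (weight $m-1$) from $k_n$ i.i.d.\ Borel$(1)$ variables conditioned to sum to $n$, apply Stirling's formula to obtain the limiting density $\frac{c}{\sqrt{2\pi x}}\ex^{-c^2 x/2}$, upgrade pointwise density convergence to distributional convergence via Scheffé's lemma, and finally average over $\zeta_n$ using $p_n\zeta_n \to \e$ together with continuity in the parameter $c$. Your formula for $\P(\b_{n,1}=m \mid \zeta_n=k)$ via the Borel convolution ratio is exactly what the paper obtains after summing out $n_2,\dots,n_k$, so the two proofs are the same argument.
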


\begin{proof}
We work conditionally on $\{\zeta_n = k_n\}$ with $k_n \sim c p_n^{-1}$, $c > 0$ arbitrary and prove the convergence in distribution $(n p_n)^{-2} \b_{n,1} \to c^{-2} \Gauss^2$. The general claim then follows as in the proof of \autoref{pro-convergence_arbres_non_brules_sauts_subordinateur_cas_sous_critique}. For any $\lambda \ge 0$, we write
\begin{align*}
\E[\exp(-\lambda (n p_n)^{-2} \b_{n,1})] &= \sum_{m = 0}^\infty \exp(-\lambda (n p_n)^{-2} m) \P(\b_{n,1} = m)
\\
&= \int_0^\infty \exp(-\lambda (n p_n)^{-2} \lfloor x \rfloor) \P(\b_{n,1} = \lfloor x \rfloor) \d x
\\
&= \int_0^\infty \exp(-\lambda (n p_n)^{-2} \lfloor x (n p_n)^2 \rfloor) \P(\b_{n,1} = \lfloor x (n p_n)^2 \rfloor) (n p_n)^2 \d x.
\end{align*}
We show the pointwise convergence of the densities
\begin{equation*}
\lim_{n \to \infty} \P(\b_{n,1} = \lfloor x (n p_n)^2 \rfloor) (n p_n)^2 = \frac{c}{\sqrt{2 \pi x}} \exp\Big(-\frac{c^2 x}{2}\Big),
\quad\text{for every } x > 0.
\end{equation*}
Then Scheffé's Lemma implies that this convergence also holds in $L^1$, which allows us to pass to the limit in the above integral:
\begin{equation*}
\lim_{n \to \infty} \E[\exp(-\lambda (n p_n)^{-2} \b_{n,1})]
= \int_0^\infty \exp(-\lambda x) \frac{c}{\sqrt{2 \pi x}} \exp\Big(-\frac{c^2 x}{2}\Big) \d x
= \E[\exp(-\lambda c^{-2} \Gauss^2)].
\end{equation*}
Recall the distribution of $k$ i.i.d. Borel(1) random variables conditioned to have sum $n$: for any integers $n_1, \dots, n_k \ge 1$ such that $n_1 + \dots + n_k = n$,
\begin{equation*}
\P(\beta_{n,1} = n_1, \dots, \beta_{n,k} = n_k) = \frac{(n-k)!}{k n^{n-k-1}} \prod_{j = 1}^k \frac{n_j^{n_j-1}}{n_j!}.
\end{equation*}
In particular, the $\beta_{n,j}$'s are identically distributed and for any $m \in \{1, \dots, n-k+1\}$, summing over all the $n_2, \dots, n_k \ge 1$ such that $n_2 + \dots + n_k = n-m$,
\begin{equation*}
\P(\beta_{n,1} = m)
= \sum_{n_2, \dots, n_k} \frac{(n-k)!}{k n^{n-k-1}} \frac{m^{m-1}}{m!} \prod_{j = 2}^k \frac{n_j^{n_j-1}}{n_j!}
= \frac{(n-k)!}{k n^{n-k-1}} \frac{m^{m-1}}{m!} \frac{(k-1) (n-m)^{n-m-k}}{(n-m-k+1)!}.
\end{equation*}
We know that conditionally given $(\beta_{n,1}, \dots, \beta_{n,k})$, $\b_{n,1}$ is distributed as one of these variables picked proportionally to its value minus $1$. Thus for any $m \in \{2, \dots, n-k+1\}$, we have
\begin{align*}
\P(\b_{n,1} = m) &= \sum_{j=1}^k \P(\b_{n,1} = \beta_{n,j} \,\vert\, \beta_{n,j} = m)\, \P(\beta_{n,j} = m)
\\
&= k \frac{m-1}{n-k} \P(\beta_{n,1} = m)
\\
&= (m-1)(k-1) \frac{(n-k-1)!}{n^{n-k-1}} \frac{m^{m-1}}{m!} \frac{(n-m)^{n-m-k}}{(n-m-k+1)!}.
\end{align*}
Suppose that $m, k \to \infty$ as $n \to \infty$ with $m, k = o(n)$, then Stirling's formula yields
\begin{equation*}
\P(\b_{n,1} = m)
= \frac{1}{\sqrt{2\pi}} \, \frac{k}{n\sqrt{m}} \exp\bigg(- \frac{k^2m}{2n^2} + O\bigg(\frac{k^3m}{n^3}\bigg) + O\bigg(\frac{(km)^2}{n^3}\bigg)\bigg) (1+o(1)).
\end{equation*}
For any $x, c > 0$, taking $m = \lfloor x (n p_n)^2\rfloor$ and $k \sim c p_n^{-1}$, we get
\begin{align*}
\P(\b_n = \lfloor x (n p_n)^2\rfloor) (n p_n)^2
&= \frac{c}{\sqrt{2 \pi x}} \Big(-\frac{c^2 x}{2} + O\Big(\frac{1}{n p_n}\Big) + O(n p_n^2)\Big) (1+o(1))
\\
&= \frac{c}{\sqrt{2 \pi x}} \exp\Big(-\frac{c^2 x}{2}\Big) (1+o(1)),
\end{align*}
and the proof is now complete.
\end{proof}

More generally, for any integer $j \ge 1$, denote by $\zeta_{n,j}$ the time of the $j$-th fire, so that $\b_{n,j}$ denotes the size of the subtree burnt at time $\zeta_{n,j}$.

\begin{Pro}\label{pro-taille_premiers_arbres_brules_cas_sur_critique}
We have for any $j \ge 1$,
\begin{equation*}
\lim_{n \to \infty} (n p_n)^{-2} (\b_{n,1}, \dots, \b_{n,j}) = \big(\gamma_1^{-2} \Gauss_1^2, \dots, \gamma_j^{-2} \Gauss_j^2\big)
\quad\text{in distribution.}
\end{equation*}
\end{Pro}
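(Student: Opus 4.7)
The proof proceeds by induction on $j$, with the base case $j = 1$ being \autoref{lem-taille_premier_arbre_brule_cas_sur_critique}. For the inductive step, I would exploit the strong Markov property of the fire dynamics at the instant $\zeta_n^{(j)}$ of the $j$-th fire: conditionally on the configuration at that time---namely, the forest $F_n^{(j)}$ of currently flammable edges together with $(\b_{n,1}, \dots, \b_{n,j})$ and $(\zeta_n^{(1)}, \dots, \zeta_n^{(j)})$---the subsequent dynamics on $F_n^{(j)}$ is a fresh fire dynamics with the same parameter $p_n$, independent of the past.

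The inductive hypothesis combined with $n p_n \to \infty$ and $n^{1/2} p_n \to 0$ ensures that $\sum_{i=1}^{j} \b_{n,i} = O_{\P}((n p_n)^2) = o_{\P}(n)$ and $\zeta_n^{(j)} = O_{\P}(p_n^{-1}) = o_{\P}(n)$, so $F_n^{(j)}$ retains $n(1-o_{\P}(1))$ edges. The inter-ignition time $\zeta_n^{(j+1)} - \zeta_n^{(j)}$, a truncated geometric of parameter $p_n$ based on these remaining edges, satisfies $p_n(\zeta_n^{(j+1)} - \zeta_n^{(j)}) \to \e_{j+1}$ in distribution, where $\e_{j+1}$ is an exponential of parameter $1$ independent of the past. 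This delivers the joint convergence $p_n \zeta_n^{(i)} \to \gamma_i$ for $i \leq j+1$.

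The heart of the induction is to analyze $\b_{n,j+1}$. I would establish, by adapting the density computation in the proof of \autoref{lem-taille_premier_arbre_brule_cas_sur_critique} to a partially processed Cayley tree, the asymptotic
\begin{equation*}
\P\bigl(\b_{n,j+1} = m \,\bigl|\, \zeta_n^{(j+1)} = k,\ \text{past history}\bigr) \sim \frac{1}{\sqrt{2\pi}}\,\frac{k}{n\sqrt{m}}\,\exp\bigl(-k^2 m/(2n^2)\bigr),
\end{equation*}
valid uniformly for $m, k \to \infty$ with $m, k = o(n)$, and over conditionings compatible with the inductive hypothesis. The key point is that, up to negligible perturbations, the $(j+1)$-st ignition behaves as a first ignition in a fresh Cayley tree at effective time $\zeta_n^{(j+1)}$: the $o_{\P}(n)$ already-burnt vertices and $o_{\P}(n)$ already-fireproofed edges only contribute beyond the leading order in the Stirling expansion. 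Setting $k = c\, p_n^{-1}$, rescaling by $(n p_n)^{-2}$, and then integrating against the marginal $p_n \zeta_n^{(j+1)} \to \gamma_{j+1}$ (exactly as at the end of the proof of \autoref{lem-taille_premier_arbre_brule_cas_sur_critique}, via Scheffé's lemma) yields $(n p_n)^{-2}\b_{n,j+1} \to \gamma_{j+1}^{-2} \Gauss_{j+1}^2$, jointly with the previous sizes, where $\Gauss_{j+1}$ is an independent standard Gaussian arising from the randomness of the ignited edge's position within the (essentially uniform) forest at time $\zeta_n^{(j+1)}$.

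The main obstacle is the combinatorial justification of the density asymptotic: the explicit closed-form expression for $\P(\b_{n,1}=m)$ used in the proof of \autoref{lem-taille_premier_arbre_brule_cas_sur_critique} must be extended to a joint probability accounting for $j$ prior ignitions. I would either generalize the Borel--Tanner decomposition of formula~\eqref{eq-loi_foret_uniforme} to track all $j+1$ burnt blocks and $j+1$ ignition times (each step contributing a size-biasing factor $(m_i-1)/(n-k_i)$ that can be controlled), or bypass the direct computation by coupling the dynamics on $\tn$ with a fresh fire dynamics on a Cayley tree of size $n - \sum_{i \leq j}\b_{n,i}$ and showing that $\b_{n,j+1}$ coincides with the first burnt subtree of that auxiliary dynamics up to a $o_{\P}((n p_n)^2)$ error, allowing a direct invocation of \autoref{lem-taille_premier_arbre_brule_cas_sur_critique}.
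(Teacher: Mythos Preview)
Your approach---induction on $j$, Markov property at the $j$-th fire, convergence of the fire times via truncated geometrics, then reapplying the \autoref{lem-taille_premier_arbre_brule_cas_sur_critique} computation---is exactly the paper's. The difference is that you overstate the ``main obstacle.'' The paper observes that, conditionally on $\b_{n,1}$ and $\zeta_{n,1}$, the remaining forest is \emph{exactly} uniformly distributed among all labeled forests with $\zeta_{n,1}-1$ trees and $n-\b_{n,1}$ vertices (this is a standard property of uniform Cayley trees under edge removal, implicit in \eqref{eq-loi_foret_uniforme}). Hence $\b_{n,2}$ has precisely the same law as $\b_{n,1}$ in the proof of \autoref{lem-taille_premier_arbre_brule_cas_sur_critique}, only with $n$ replaced by $n-\b_{n,1}\sim n$ and $k$ replaced by $\zeta_{n,2}-1$. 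No adaptation of the closed form, no extended Borel--Tanner tracking of all $j+1$ blocks, and no coupling with error are needed: the formula from the base case applies verbatim with shifted parameters, and the Stirling asymptotic goes through unchanged because the shifts are $o_{\P}(n)$. Your two proposed workarounds would succeed, but they are unnecessary once you notice this exact distributional identity.
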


\begin{proof}
We prove the claim for $j=2$ for simplicity of notation, the general case follows by induction in the same manner. Notice first that the times at which the first $j$ fires appear jointly converge:
\begin{equation}\label{eq-convergence_temps_des_premiers_feux}
\lim_{n \to \infty} p_n (\zeta_{n,1}, \dots, \zeta_{n,j}) = (\gamma_1, \dots, \gamma_j)
\quad\text{in distribution.}
\end{equation}
Indeed, conditionally given the size of the first burnt subtree $\b_{n,1} = m$ and the number of edges previously fireproof $\zeta_{n,1}-1 = k-1$, it remains a forest containing $(n-1) - (k-1) - (m-1) = n-m-k+1$ edges and the time $\zeta_{n,2}-\zeta_{n,1}$ we wait for the second fire after the first one is again a truncated geometric random variable which takes value
\begin{align*}
\infty &\text{ with probability } (1-p_n)^{n-m-k+1},
\\
\text{and } \ell &\text{ with probability } p_n (1-p_n)^{\ell-1} \text{, for any } \ell \in \{1, \dots, n-m-k+1\}.
\end{align*}
Since $\b_{n,1}+\zeta_{n,1} = o(n)$ in probability, we see that $p_n (\zeta_{n,2}-\zeta_{n,1})$, conditionally given $\b_{n,1}$ and $\zeta_{n,1}$, converges in distribution to an exponential random variable with parameter $1$. This yields \eqref{eq-convergence_temps_des_premiers_feux} in the case $j = 2$.

The same idea gives the claim of \autoref{pro-taille_premiers_arbres_brules_cas_sur_critique}. The remaining forest after the first fire is, conditionally given $\b_{n,1}$ and $\zeta_{n,1}$, uniformly distributed amongst the forests with $\zeta_{n,1}-1$ trees and $n-\b_{n,1}$ vertices. Therefore, conditionally given $\b_{n,1}$ and $\zeta_{n,1}$, $\b_{n,2}$ is distributed as the size of a tree chosen at random with probability proportional to its number of edges in a forest consisting of $\zeta_{n,2}-1$ trees with total size $n-\b_{n,1} \sim n$. Then the proof of \autoref{lem-taille_premier_arbre_brule_cas_sur_critique} shows that such a random variable, rescaled by a factor $(n p_n)^{-2}$, converges in distribution to $\gamma_2^{-2} \Gauss_2^2$. This yields 
\begin{equation*}
\lim_{n \to \infty} (n p_n)^{-2} \big(\b_{n,1}, \b_{n,2}\big) = \big(\gamma_1^{-2} \Gauss_1^2, \gamma_2^{-2} \Gauss_2^2\big)
\quad\text{in distribution,}
\end{equation*}
and the proof is complete after an induction on $j$.
\end{proof}

\subsection{Asymptotic size of all burnt subtrees}\label{section52}

To strengthen the convergence from finite dimensional vectors to the $\ell^1$ convergence, we need to bound the remainders. This is done in the following lemma, the last ingredient for the proof of \autoref{thm-sur_crit_1}.
\begin{Lem}\label{lem-borne_restes_serie_taille_arbres_brules}
For any $\eps > 0$, we have
\begin{equation*}
\lim_{j_0 \to \infty} \limsup_{n \to \infty} \P\bigg((n p_n)^{-2} \sum_{j = j_0}^\infty \b_{n,j} > \eps\bigg) = 0.
\end{equation*}
\end{Lem}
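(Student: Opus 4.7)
My approach is to exploit the Markovian structure of the fire dynamics at the $(j_0-1)$-th fire. Let $\mathcal{F}_{j_0-1}$ denote the $\sigma$-algebra generated by the dynamics up to and including that fire, and let $s_1,\dots,s_q$ be the sizes of the components of the forest that remains at that moment. As in the proof of \autoref{pro-convergence_arbres_non_brules_sauts_subordinateur_cas_sous_critique}, conditionally on $\mathcal{F}_{j_0-1}$ these components are independent uniform Cayley trees on their vertex sets, and the subsequent fires on them are distributed as independent copies of the fire dynamics with parameter $p_n$. Consequently
\[
\sum_{j \geq j_0} \b_{n,j} \;=\; \sum_{i=1}^q B^{(i)},
\]
where $B^{(i)} \leq s_i$ is the number of burnt vertices in the $i$-th component.

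The first step is to bound $\max_i s_i$. Applying \autoref{pro-taille_premiers_arbres_brules_cas_sur_critique} at index $j_0$ and using that $\b_{n,j_0}$ is drawn size-biasedly from the (further refined) forest at time $\zeta_{n,j_0}$, I would show that, for any $\delta > 0$, there exists $C = C(\delta)$ such that $\max_i s_i \leq C (np_n)^2 \gamma_{j_0}^{-2}$ with probability $\geq 1-\delta$ provided $j_0$, and then $n$, are large enough. In particular each component is then \emph{deeply} supercritical: $s_i p_n^2 \leq C(np_n^2)^2 \gamma_{j_0}^{-2} \to 0$ uniformly in $i$.

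The second step is to control $\sum_i B^{(i)}$. Since the dynamics on distinct components are independent and on each component the scale $s_i p_n^2$ is very small, I aim to establish a uniform bound $B^{(i)} = O_P((s_i p_n)^2)$, from which
\[
\sum_i B^{(i)} \;\leq\; O_P(p_n^2) \sum_i s_i^2 \;\leq\; O_P(p_n^2)\, \max_i s_i \cdot \sum_i s_i \;\leq\; O_P\bigl((np_n)^2\, n p_n^2 \gamma_{j_0}^{-2}\bigr),
\]
which is $o_P((np_n)^2)$ since $np_n^2 \to 0$. Markov's inequality then yields the probability bound in the Lemma, and letting $j_0 \to \infty$ completes the proof.

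\textbf{Main obstacle.} The principal difficulty is the uniform estimate $B^{(i)} = O_P((s_i p_n)^2)$, which is essentially \autoref{thm-sur_crit_1} applied to each subtree of size $s_i$. To avoid circularity I would bootstrap from \autoref{lem-taille_premier_arbre_brule_cas_sur_critique} (the distributional identification of the first burnt subtree in each component) by induction on $j_0$; the infinite first moment of the limiting variable $\e^{-2} Z^2$ prevents any direct $L^1$ comparison, so one must truncate using the deterministic bound $B^{(i)} \leq s_i$ to tame the heavy tail. This truncation is the real technical heart of the argument.
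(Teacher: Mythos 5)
Your overall plan — condition at the $(j_0-1)$-th fire, use the branching property of uniform Cayley forests, and bound the expected contribution of all later burns — is conceptually sound and in the same spirit as the paper's argument. However, the key chain of inequalities in your second display is incorrect and cannot be repaired as written.

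The statement $B^{(i)} = O_P\big((s_i p_n)^2\big)$ is a distributional convergence \emph{per fixed component}, with the implicit "tight constant" converging to an independent copy of $\e^{-2}\Gauss^2$. To pass to $\sum_i B^{(i)} \le O_P(p_n^2)\sum_i s_i^2$ you implicitly replace all these constants by a single tight random variable, i.e.\ you bound $\max_i B^{(i)}/(s_i p_n)^2$ by something tight. But the number of components is $q \sim j_0 p_n^{-1} \to \infty$, and $\e^{-2}\Gauss^2$ has tail $\P(\e^{-2}\Gauss^2 > u) \sim \sqrt{2/(\pi u)}$, so the maximum of $q$ roughly independent such variables grows like $q^2$ and is not tight. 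The inequality $\sum_i B^{(i)} \le O_P(p_n^2)\sum_i s_i^2$ therefore fails. In fact, if that chain were valid it would yield $(np_n)^{-2}\sum_{j\ge j_0}\b_{n,j} = o_P(1)$ for \emph{every} fixed $j_0$ — your own remark that the bound is "$o_P((np_n)^2)$ since $np_n^2\to 0$" makes no use of $j_0\to\infty$ — and taking $j_0=1$ this would contradict \autoref{thm-sur_crit_1}, which states that $(np_n)^{-2}B_n$ has a non-degenerate limit. So the display cannot be right.

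Your truncation idea points in the right direction, but note that it produces a different estimate: using $B^{(i)}\le s_i$ and the $\sqrt{u}$-tail of $\e^{-2}\Gauss^2$, one gets $\E[B^{(i)}] \lesssim p_n s_i^{3/2}$, not $p_n^2 s_i^2$ (the former dominates the latter by the factor $(s_i^{1/2}p_n)^{-1}\to\infty$). Summing then gives $\E\big[\sum_i B^{(i)}\big]\lesssim p_n n (\max_i s_i)^{1/2}$, which — combined with a bound $\max_i s_i \lesssim (np_n/j_0)^2$ — yields $\lesssim (np_n)^2/j_0$ and would finish. But this requires a \emph{finite-$n$} moment bound, not just the distributional limit, and you have not supplied one; "bootstrap by induction on $j_0$" does not produce it, since the issue is uniformity in the number of components and in $n$, not in $j_0$. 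The paper resolves exactly this point by introducing a coupled "marking" dynamics and citing an explicit expectation estimate (Proposition~1, Lemma~5 and equation~(2) of Bertoin's \emph{Burning cars in a parking lot}) that gives $\E[\b_{n,k}']\le K n^3/(k^2(n-k))$ uniformly; this is the missing ingredient in your approach. Finally, your step 1 also has a gap: \autoref{pro-taille_premiers_arbres_brules_cas_sur_critique} controls the sizes of the \emph{burnt} subtrees (size-biased picks), not $\max_i s_i$ over the whole remaining forest; you would need something like Pavlov's Theorem~3 for the maximum tree size in a uniform forest with $n=o(k^2)$.
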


In order to prove this result, we consider a slightly different sequence of random subtrees of $\tn$, which can be coupled with the sequence of burnt subtrees and for which the study is easier. Precisely, consider the following random dynamics on $\tn$: we remove successively the edges in a random uniform order and at each step, we mark one subtree at random proportionally to its number of edges. We stress that in this procedure, the subtrees are not burnt, which implies that the edges of a marked subtree can be removed afterward and that a subtree of a marked one may be marked as well. For each $k = 1, \dots, n-2$, we denote by $\b_{n,k}'$ the size of the subtree which is marked when $k$ edges have been removed.

\begin{Lem}\label{lem-borne_restes_esperance_arbres_marques}
There exists a numerical constant $C > 0$ such that for any $a \in (0, \infty)$, we have
\begin{equation*}
\limsup_{n \to \infty} n^{-2} p_n^{-1} \sum_{k = \lfloor a p_n^{-1} \rfloor}^{\lfloor n - u_n \rfloor} \E[\b_{n,k}'] \le \frac{C}{a},
\end{equation*}
where $u_n = n \exp(-\sqrt{n p_n})$ for every integer $n$.
\end{Lem}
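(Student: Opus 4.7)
The plan is to reduce the expected size of the marked subtree to a pair-in-same-component probability, evaluate this probability via the distance distribution in the uniform Cayley tree $\tn$, and then sum carefully.

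First, let $\beta_1,\dots,\beta_{k+1}$ be the sizes of the components of the uniform random forest on $[n]$ with $k+1$ trees (obtained by removing $k$ uniform edges of $\tn$). Since $\b_{n,k}'$ is size-biased by number of edges, one has $\E[\b_{n,k}'] = \frac{1}{n-k-1}\E\bigl[\sum_j \beta_j(\beta_j-1)\bigr]$. Using the deterministic identity $\sum_j \beta_j(\beta_j-1) = \#\{(u,v) : u\neq v \text{ in the same tree}\}$ and exchangeability over vertices, this equals $\frac{n(n-1)}{n-k-1}\,f(n,k)$, where $f(n,k) = \P(u,v \text{ in the same component})$ for any two fixed distinct vertices. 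Now the event $\{u,v \text{ same component}\}$ is exactly the event that none of the $k$ removed edges lies on the unique $u$-$v$ path in $\tn$, so $f(n,k) = \E\bigl[\binom{n-1-D}{k}/\binom{n-1}{k}\bigr] \le \E\bigl[(1-k/n)^D\bigr] \le \E[e^{-kD/n}]$ with $D = d_{\tn}(u,v)$.

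Second, I would compute the law of $D$ explicitly. Counting Cayley trees containing a specified $u$-$v$ path of length $d$ (via Moon's formula for spanning trees of $K_n$ containing a prescribed subforest) gives $\P(D=d) = \frac{d+1}{n}\prod_{j=2}^d(1-j/n)$ for $1 \le d \le n-1$, hence $\P(D=d) \le \frac{e(d+1)}{n}e^{-d^2/(2n)}$ by $1-x \le e^{-x}$. A Laplace-type estimate then bounds $\E[e^{-kD/n}] \le \frac{C}{n}\int_0^\infty (x+1)\,e^{-kx/n - x^2/(2n)}\,dx$; after substituting $y=x/\sqrt{n}$ with $\lambda = k/\sqrt n$, the crude bounds $\int_0^\infty e^{-\lambda y - y^2/2}\,dy \le 1/\lambda$ and $\int_0^\infty y\,e^{-\lambda y - y^2/2}\,dy \le 1/\lambda^2$ (obtained by dropping the Gaussian factor) yield $\E[e^{-kD/n}] \le C(n/k^2 + 1/k)$, hence
\[
\E[\b_{n,k}'] \le \frac{C n^3}{k^2(n-k-1)} + \frac{C n^2}{k(n-k-1)}.
\]

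Third, I would sum this bound over $k$ from $\lfloor a p_n^{-1}\rfloor$ to $\lfloor n - u_n\rfloor$, splitting the range at $k = n/2$. For the lower range, $n-k-1 \gtrsim n$, and the contribution is $O(n^2 p_n/a) + O(n\log(np_n))$. For the upper range, $k \gtrsim n$, and the contribution is $O(n\log(n/u_n)) = O(n\sqrt{np_n})$ by the very choice of $u_n = n e^{-\sqrt{np_n}}$. Multiplying by $n^{-2}p_n^{-1}$, the first term gives $O(1/a)$ while the remaining two give $O(\log(np_n)/(np_n))$ and $O((np_n)^{-1/2})$, both tending to $0$ since $np_n \to \infty$. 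Taking $\limsup$ yields a bound $\le C/a$ as required.

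The main obstacle is Step 2: one needs the correct $1/\lambda^2$ (not the naive $1/\lambda$) tail for $\int y\,e^{-\lambda y - y^2/2}dy$, which reflects the Rayleigh-type fluctuations of $D/\sqrt n$; without this improvement one would obtain an extra logarithmic factor. The cutoff $u_n = ne^{-\sqrt{np_n}}$ is calibrated precisely so that $\log(n/u_n) = \sqrt{np_n}$ is $o(np_n)$, making the $k$-close-to-$n$ contribution negligible compared to the main scale $n^2 p_n$.
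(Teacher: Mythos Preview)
Your argument is correct and arrives at essentially the same bound $\E[\b_{n,k}']\le Cn^{3}/(k^{2}(n-k))$ (plus a harmless lower-order term) as the paper, and the summation in your Step~3 is identical to the paper's. The route, however, is genuinely different. The paper obtains the moment bound by quoting an external estimate (Proposition~1, Lemma~5 and~(2) of Bertoin, \emph{Burning cars in a parking lot}) which directly gives $\E\big[\sum_{i}(\beta_{n,i}-1)\beta_{n,i}/n\big]\le K(n/k)^{2}$; you instead derive it from first principles by rewriting $\sum_j\beta_j(\beta_j-1)$ as a count of ordered vertex pairs in the same component, reducing to the path--survival probability $\E\big[\binom{n-1-D}{k}/\binom{n-1}{k}\big]$, and then exploiting the explicit law $\P(D=d)=\tfrac{d+1}{n}\prod_{j=2}^{d}(1-j/n)$ together with the crude Laplace bound $\int_0^\infty y\,e^{-\lambda y}\,dy=1/\lambda^{2}$. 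Your approach is fully self-contained and makes transparent why the Rayleigh scaling $D\asymp\sqrt{n}$ is exactly what produces the $n/k^{2}$ factor; the paper's approach is shorter but relies on a result proved elsewhere. Note that your ``main obstacle'' in Step~2 is not actually an obstacle: dropping the Gaussian factor in $\int_0^\infty y\,e^{-\lambda y-y^{2}/2}\,dy$ already yields $1/\lambda^{2}$, so the required bound is immediate.
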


The role of the sequences $u_n$ and $a p_n^{-1}$ shall appear in the proofs of \autoref{lem-borne_restes_esperance_arbres_marques} and \autoref{lem-borne_restes_serie_taille_arbres_brules}; note that since $\lim_{n \to \infty} n p_n = \infty$, we have
\begin{equation}\label{eq-convergence_suite_technique}
\lim_{n \to \infty} p_n u_n = 0
\quad\text{and}\quad
\lim_{n \to \infty} (n p_n)^{-1} \ln\bigg(\frac{n-u_n}{u_n}\bigg)
= 0.
\end{equation}
The first convergence shows that the sum in \autoref{lem-borne_restes_esperance_arbres_marques} is not empty for $n$ large enough.

\begin{proof}
Fix $k \le n-2$ and let $(\beta_{n,1}, \dots, \beta_{n,k})$ be a $k$-tuple formed by i.i.d. Borel(1) random variables conditioned to have sum $n$. As we have seen, $\b_{n,k}'$ can be viewed as one the $\beta_{n,i}$'s picked at random with probability proportional to its value minus one and hence,
\begin{equation*}
\E[\b_{n,k}']
= \E\bigg[\sum_{i=1}^k \beta_{n,i} \frac{\beta_{n,i} - 1}{n-k}\bigg]
= \frac{n}{n-k} \E\bigg[\sum_{i=1}^k (\beta_{n,i}-1) \frac{\beta_{n,i}}{n}\bigg].
\end{equation*}
Bertoin \cite{Bertoin-Burning_cars_in_a_parking_lot}, Section 3.1, provides an upper bound for the expectation on the right-hand side. Precisely, Proposition~1 in \cite{Bertoin-Burning_cars_in_a_parking_lot}, together with Lemma~5 and equation~(2) there, shows that there exists a numerical constant $K > 0$ such that for every integers $1 \le k \le n$, we have
\begin{equation*}
\E\bigg[\sum_{i=1}^k (\beta_{n,i}-1) \frac{\beta_{n,i}}{n}\bigg]
\le K \bigg(\frac{n}{k}\bigg)^2.
\end{equation*}
Hence for every $n$,
\begin{equation*}
\sum_{k = \lfloor a p_n^{-1} \rfloor}^{\lfloor n - u_n \rfloor} \E[\b_{n,k}']
\le K n^3 \sum_{k = \lfloor a p_n^{-1} \rfloor}^{\lfloor n - u_n \rfloor} \frac{1}{k^2 (n-k)}.
\end{equation*}
Comparing sums and integrals, we have on the one hand,
\begin{equation*}
\sum_{k = \lfloor a p_n^{-1} \rfloor}^{\lfloor 3n/4 \rfloor} \frac{1}{k^2 (n-k)}
\le \frac{4}{n} \sum_{k = \lfloor a p_n^{-1} \rfloor}^{\lfloor 3n/4 \rfloor} \frac{1}{k^2}
= \frac{4}{a} n^{-1} p_n (1+o(1)),
\end{equation*}
and on the other hand,
\begin{equation*}
\sum_{k = \lceil 3n/4 \rceil}^{\lfloor n - u_n \rfloor} \frac{1}{k^2 (n-k)}
\le n^{-2}\bigg[\ln(x) - \ln(n-x) - \frac{n}{x}\bigg]_{3n/4}^{n - u_n}
= n^{-2} \ln\bigg(\frac{n - u_n}{u_n}\bigg) (1+o(1)).
\end{equation*}
Summing the two terms and appealing \eqref{eq-convergence_suite_technique}, we obtain
\begin{equation*}
\sum_{k = \lfloor a p_n^{-1} \rfloor}^{\lfloor n - u_n \rfloor} \E[\b_{n,k}']
\le \frac{4K}{a} n^2 p_n (1+o(1)),
\end{equation*}
and the claim follows.
\end{proof}

We have a natural coupling between burnt and marked subtrees, which enables us to deduce \autoref{lem-borne_restes_serie_taille_arbres_brules} from \autoref{lem-borne_restes_esperance_arbres_marques}: for each $k = 1, \dots, n-2$, we toss a coin which gives Head with probability $p_n$; the first burnt subtree, say, $\t_{n, 1}$, is distributed as the first marked subtree, say, $\t_{n,k}'$, for which the outcome is Head. Then, the second burnt subtree $\t_{n, 2}$ is distributed as the next marked subtree for which the outcome is Head and which is not contained in $\t_{n,k}'$, and so on. In the next proof, we implicitly assume that the marked and burnt subtrees are indeed coupled.

\begin{proof}[Proof of \autoref{lem-borne_restes_serie_taille_arbres_brules}]
Fix $\eps, \delta > 0$ and $a > \delta^{-1}$. Since $p_n \zeta_{n,j} \to \gamma_j$ in distribution as $n \to \infty$ for any $j \ge 1$ and $j^{-1} \gamma_j \to 1$ in probability as $j \to \infty$ from the law of large numbers, we may, and do, fix $j_0 \ge 1$ and further $n_0 \ge 1$ such that for any $n \ge n_0$, we have
\begin{equation*}
\P(\zeta_{n,j_0} > a p_n^{-1}) \ge 1-\delta.
\end{equation*}
For any $j \ge 1$, denote by $\theta_{n,j}-1$ the number of edges that have been removed in the marking procedure when we mark the subtree corresponding to the burnt subtree $\b_{n,j}$. We have
\begin{equation*}
\sum_{j \ge j_0} \b_{n,j}
\le \sum_{k = 1}^{n - 2} \b_{n,k}' \mathbf{1}_{\{\eta_k = 1\}} \mathbf{1}_{\{k \ge \theta_{n,j_0}\}},
\end{equation*}
where $\eta_k = 1$ if and only if the outcome of the coin which is tossed at the $k$-th step is Head. Further, since $\zeta_{n,1} = \theta_{n,1}$ and $\zeta_{n,j} \le \theta_{n,j}$ for every $j \ge 2$, we see that
\begin{equation*}
\P\bigg((n p_n)^{-2} \sum_{j = j_0}^\infty \b_{n,j} > \eps \,\bigg\vert\, \zeta_{n,j_0} > a p_n^{-1}\bigg)
\le \P\bigg((n p_n)^{-2} \sum_{k = \lfloor a p_n^{-1} \rfloor}^{n - 2} \b_{n,k}' \mathbf{1}_{\{\eta_k = 1\}} > \eps\bigg).
\end{equation*}
Recall that $\lim_{n \to \infty} p_n u_n = 0$, which implies that the probability that no tree is marked after the $\lfloor n - u_n \rfloor$-th step is $(1-p_n)^{\lceil u_n \rceil - 2} \ge 1-\delta$ for any $n$ large enough. Finally, from \autoref{lem-borne_restes_esperance_arbres_marques},
\begin{equation*}
\limsup_{n \to \infty} \P\bigg((n p_n)^{-2} \sum_{k = \lfloor a p_n^{-1} \rfloor}^{\lfloor n - u_n \rfloor} \b_{n,k}' \mathbf{1}_{\{\eta_k = 1\}} > \eps\bigg)
\le \limsup_{n \to \infty} \eps^{-1} (n p_n)^{-2} \sum_{k = \lfloor a p_n^{-1} \rfloor}^{\lfloor n - u_n \rfloor} \E[\b_{n,k}'] p_n
\le \eps^{-1} \frac{C}{a}.
\end{equation*}
We conclude that 
\begin{equation*}
\limsup_{n \to \infty} \P\bigg((n p_n)^{-2} \sum_{j = j_0}^\infty \b_{n,j} > \eps\bigg)
\le \eps^{-1} C\delta + 2 \delta,
\end{equation*}
and the claim follows since $\delta$ is arbitrary.
\end{proof}

Using the same reasoning as in the proof of \autoref{thm-sous_crit}, \autoref{thm-sur_crit_2} follows readily from \autoref{pro-taille_premiers_arbres_brules_cas_sur_critique} and \autoref{lem-borne_restes_serie_taille_arbres_brules}.

\begin{proof}[Proof of \autoref{thm-sur_crit_2}]
For every $n, j \ge 1$, we write $((n p_n)^{-2} \b_{n,k})_{k \ge 1} = S_n(j) + R_n(j)$ with
\begin{equation*}
S_n(j) = (n p_n)^{-2} \big(\b_{n,1}, \b_{n,2}, \dots, \b_{n,j}, 0, 0, \dots\big),
\quad\text{and}\quad
R_n(j) = (n p_n)^{-2} \big(0, \dots, 0, \b_{n,j+1}, \b_{n,j+2}, \dots\big);
\end{equation*}
and similarly, $(\gamma_i^{-2} \Gauss_i^2)_{i \ge 1} = S(j) + R(j)$, with
\begin{equation*}
S(j) = \big(\gamma_1^{-2} \Gauss_1^2, \gamma_2^{-2} \Gauss_2^2, \dots, \gamma_j^{-2} \Gauss_j^2, 0, 0, \dots\big),
\quad\text{and}\quad
R(j) = \big(0, \dots, 0, \gamma_{j+1}^{-2} \Gauss_{j+1}^2, \gamma_{j+2}^{-2} \Gauss_{j+2}^2, \dots\big).
\end{equation*}
From \autoref{pro-taille_premiers_arbres_brules_cas_sur_critique}, for any $j \ge 1$, $\lim_{n \to \infty} S_n(j) = S(j)$ in distribution. Further, for any $\eps > 0$, since the sequence $(\gamma_i^{-2} \Gauss_i^2)_{i \ge 1}$ is summable, and thanks to \autoref{lem-borne_restes_serie_taille_arbres_brules}, there exists $j_0 \ge 1$ and then $n_0 \ge 1$ such that
\begin{equation*}
\P(\|R(j_0)\| > \eps) < \eps, \quad\text{and for every } n \ge n_0, \quad \P(\|R_n(j_0)\| > \eps) < \eps.
\end{equation*}
which completes the proof.
\end{proof}


\vspace{\baselineskip}
\textbf{Acknowledgement. }
The author wishes to thank Jean Bertoin for introducing the problem and fruitful discussions, and also two anonymous referees whose suggestions and remarks helped to improve this paper.\newline
This work is supported by the Swiss National Science Foundation 200021\_144325/1.


\end{document}